\def\blfootnote{\gdef\@thefnmark{}\@footnotetext}
\title{On a motivic invariant of the arc-analytic equivalence\blfootnote{2010 Mathematics Subject Classification. 14P20, 14E18, 14B05.}\blfootnote{Keywords: real singularities, Nash functions, motivic integration, arc-analytic functions, blow-Nash equivalence, arc-analytic equivalence.}}
\newcommand\shorttitle{On a motivic invariant of the arc-analytic equivalence}
\author{Jean-Baptiste Campesato\footnote{\parbox[t][2em][s]{\textwidth}{Univ. Nice Sophia Antipolis, CNRS,  LJAD, UMR 7351, 06100 Nice, France. \newline E-mail address: \url{Jean-Baptiste.CAMPESATO@unice.fr}}}}
\date{April 15, 2016}
    \DeclareSymbolFont{calletters}{OMS}{cmsy}{m}{n}
    \DeclareSymbolFontAlphabet{\mathcal}{calletters}
    \newcommand{\M}{\mathcal M}
\definecolor{darkred}{rgb}{.5,0,0}
\definecolor{darkgreen}{rgb}{0,.5,0}
\definecolor{darkblue}{rgb}{0,0,.5}
\renewcommand{\thepage}{\arabic{page}}
\shorttitle]{Jean-Baptiste Campesato}
\def\footindent{2em}
\renewcommand\@makefntext[1]{\leftskip=\footindent\hskip-\footindent\@makefnmark#1}
\newcommand*{\fnsymbolsingle}[1]{\ensuremath{\ifcase#1\or\star\or\dagger\or\ddagger\or\textsection\or\|\or\textparagraph\or\else\@ctrerr\fi}}
\newcommand*{\fnsymbolsingle}[1]{\ensuremath{\ifcase#1\or\star\or\dagger\or\ddagger\or\mathsection\or\|\or\mathparagraph\or\else\@ctrerr\fi}}
\newalphalph{\fnsymbolmult}[mult]{\fnsymbolsingle}{}
\renewcommand{\maketitle}{
  \newpage
  \null
  \thispagestyle{plain}
  
  \begin{center}
    {\LARGE \@title \par}
    \vskip 1.5em
    {\large
      \lineskip .5em
        \@author
      \par}
    \vskip 1em
    {\large \@date}
  \end{center}
  \par
  \vskip 1.5em}
\theoremstyle{plain}
\newtheorem{thm}{Theorem}[section]
\newtheorem{prop}[thm]{Proposition}
\newtheorem{cor}[thm]{Corollary}
\newtheorem{lemma}[thm]{Lemma}
\theoremstyle{definition}
\newtheorem{defn}[thm]{Definition}
\newtheorem{defns}[thm]{Definitions}
\newtheorem{eg}[thm]{Example}
\newtheorem{rem}[thm]{Remark}
\newtheorem{notation}[thm]{Notation}
\setlist[itemize]{labelindent=.6em, itemindent=1em, leftmargin=!, label=\textbullet}
  \newcommand{\TODO}[1]{\@ifmtarg{#1}{\emph{\textbf{TODO}}~}{\emph{\textbf{TODO:}~#1~}}}
\newcommand{\Var}{\operatorname{Var}}
\newcommand{\ord}{\operatorname{ord}}
\newcommand{\ac}{\operatorname{ac}}
\newcommand{\pring}[1]{\ensuremath{#1^\bullet}}
\newcommand{\quotient}[2]{\left.\raisebox{.05em}{$#1$}\middle/\raisebox{-.05em}{$#2$}\right.}
\renewcommand{\d}{\mathrm{d}}
\newcommand{\X}{\mathfrak X}
\newcommand{\Y}{\mathfrak Y}
\newcommand{\Jac}{\operatorname{Jac}}
\newcommand{\mon}{\mathrm{mon}}
\newcommand{\pr}{\operatorname{pr}}
\newcommand{\Sf}{\mathscr S}
\newcommand{\AS}{\mathcal{AS}}
\newcommand{\mult}{\operatorname{mult}}
\newcommand{\Conv}{\operatorname{Conv}}
\newcommand{\id}{\operatorname{id}}
\renewcommand{\M}{\mathcal M}
\newcommand{\bigast}{\mathop{\scalebox{1.5}{\raisebox{-0.2ex}{$*$}}}}
\newcommand{\vast}{\bBigg@{4}}
\newcommand{\Vast}{\bBigg@{5}}
\def\acts{\ensuremath{\rotatebox[origin=c]{-90}{$\circlearrowright$}}}
\begin{document}
\maketitle

\begin{abstract}
To a Nash function germ, we associate a zeta function similar to the one introduced by J. Denef and F. Loeser. Our zeta function is a formal power series with coefficients in the Grothendieck ring $\mathcal{M}$ of $\mathcal{AS}$-sets up to $\mathbb{R}^*$-equivariant $\mathcal{AS}$-bijections over $\mathbb{R}^*$, an analog of the Grothendieck ring constructed by G. Guibert, F. Loeser and M. Merle. This zeta function generalizes the previous construction of G. Fichou but thanks to its richer structure it allows us to get a convolution formula and a Thom--Sebastiani type formula.

We show that our zeta function is an invariant of the arc-analytic equivalence, a version of the blow-Nash equivalence of G. Fichou. The convolution formula allows us to obtain a partial classification of Brieskorn polynomials up to arc-analytic equivalence by showing that the exponents are arc-analytic invariants.
\end{abstract}

\tableofcontents

\section{Introduction}
In the context of motivic integration, J. Denef and F. Loeser \cite{DL98} \cite{DL02} associate a formal power series to a function $f:X\rightarrow\mathbb A^1$ defined on a non-singular (scheme theoric) algebraic variety. This power series, called the motivic zeta function of $f$, comes in various forms by modifying the ring where its coefficients lie. For instance J. Denef and F. Loeser work with the classical Grothendieck ring of algebraic varieties in order to define the naive motivic zeta function or with an equivariant Grothendieck ring which encodes actions of the roots of unity in order to define the equivariant motivic zeta function. This equivariant structure allows them to get a convolution formula \cite{DL99-TS} which computes a modified equivariant zeta function of $f\oplus g(x,y)=f(x)+g(y)$ by applying coefficientwise a convolution product to the modified equivariant zeta functions of $f$ and $g$.

The key lemma for the motivic change of variables formula \cite{DL99-Germs} ensures that their zeta functions are rational. Thus they admit a limit at infinity whose multiplication by $-1$ is called motivic Milnor fibers since their known realizations coincide with the ones of the classical Milnor fiber. The convolution formula induces a Thom--Sebastiani type formula for these motivic Milnor fibers.

Similarly S. Koike and A. Parusiński \cite{KP03} associate to a real analytic function germ a formal power series with coefficients in $\mathbb Z$  using the Euler characteristic with compact support. This way they define a naive zeta function and two zeta functions with sign (a positive one and a negative one) which play the role of the equivariant zeta function. Particularly these zeta functions with sign admit formulas similar to the ones of Denef--Loeser such as a convolution formula. Thanks to an adaptation to the real analytic case of the key lemma for the motivic change of variables formula, it turns out that Koike--Parusiński zeta functions are invariants of the blow-analytic equivalence of T.-C. Kuo \cite{Kuo85} for real analytic germs.

G. Fichou \cite{Fic05} brings a richer structure by using the virtual Poincaré polynomial \cite{MP03,Fic05,MP11} for $\AS$-sets instead of the Euler characteristic with compact support. This way he defines a naive zeta function and two zeta functions with sign. In order to get a rationality formula, he has to restrict to Nash functions. For this reason he introduces a semialgebraic version of the blow-analytic equivalence for Nash germs, called the blow-Nash equivalence. As it is not known if this relation is an equivalence relation, he introduces a more general notion of blow-Nash equivalence \cite{Fic05-bis} in terms of Nash modifications which is an equivalence relation but two Nash germs which are blow-Nash equivalent in this sense have to satisfy an additional condition to ensure they have the same zeta functions. Recently one uses a third definition of the blow-Nash equivalence, that is midway between the both previous ones \cite{FF,Fic06,Fic08} by adding the above cited additional condition, but it was not obvious originally whether it is an equivalence relation. We show it in Corollary \ref{cor:BNisER} by using the notion of arc-analytic equivalence that we introduce in Section \ref{sect:bne}.

G. Guibert, F. Loeser and M. Merle \cite{GLM} introduce an equivariant Grothendieck ring for actions of the multiplicative group on (scheme theoric) algebraic varieties over some fixed algebraic variety (this Grothendieck ring is equivalent to the one of Denef--Loeser for actions of the roots of unity). In this paper we first adapt this framework to $\AS$-sets up to $\mathbb R^*$-equivariant $\AS$-bijections over $\mathbb R^*$.

This will allow us to define a local zeta function similar to the one of Fichou but with additional structures. After having highlighted the links between our zeta function to the ones of Koike--Parusiński and Fichou, we show that our zeta function is also rational.

The main part of this work consists in proving that our additional structures permit to define a convolution formula allowing us to compute the zeta function of $f\oplus g$. 
More precisely, we construct a new convolution product on our Grothendieck ring which is compatible with a modified zeta function in the following sense. If we apply coefficientwise this convolution product to the modified zeta functions of $f$ and $g$, we get the modified zeta function of $f\oplus g$.

We may notice that this modified zeta function has already appeared in \cite{DL99-TS} and \cite{KP03} in their respective settings. In a way similar to \cite{KP03} we prove that our modified zeta function contains the same information as our zeta function.

The next part of this paper is devoted to the study of the behavior of our zeta function under the blow-Nash equivalence. To this purpose we introduce a new relation, the arc-analytic equivalence, we show that it is an equivalence relation and that allows us to avoid using Nash modifications. Moreover, we show that the arc-analytic equivalence coincides with the blow-Nash equivalence in the sense of \cite{FF,Fic06,Fic08}. Our zeta function and the ones of Fichou are invariants of this relation. 

Finally, the convolution formula allows us to prove that the exponents of Brieskorn polynomials are invariants of the arc-analytic equivalence. \\

\noindent\textbf{Acknowledgements.} I would like to express my gratitude to my thesis advisor Adam Parusiński for his support and helpful discussions during the preparation of this work.

\section{Geometric framework}
K. Kurdyka \cite{Kur88} introduced semialgebraic arc-symmetric subsets of $\mathbb R^d$ which are semialgebraic subsets of $\mathbb R^d$ such that given a real analytic arc on $\mathbb R^d$ either this arc is entirely included in the subset or it meets it at isolated points only. We are going to work with $\AS$-sets, a slightly different notion introduced by A. Parusiński \cite{Par04}.
\begin{defn}[\cite{Par04}]
We say that a semialgebraic subset $A\subset\mathbb P_\mathbb R^n$ is an \emph{$\mathcal{AS}$-set} if for every real analytic arc $\gamma:(-1,1)\rightarrow\mathbb P_\mathbb R^n$ such that $\gamma((-1,0))\subset A$ there exists $\varepsilon>0$ such that $\gamma((0,\varepsilon))\subset A$.
\end{defn}

\begin{defn}
By an $\AS$-map we mean a map between $\AS$-sets whose graph is $\AS$ and by an $\AS$-isomorphism we mean a bijection between $\AS$-sets whose graph is $\AS$.
\end{defn}

\begin{rem}[{\cite[Remark 4.2]{Par04}}]
The family $\AS$ is the boolean algebra spanned by semialgebraic arc-symmetric subsets of $\mathbb P_\mathbb R^n$ (in the sense of \cite[Définition 1.1]{Kur88}). Moreover closed (for the Euclidean topology) $\AS$-sets are exactly the semialgebraic arc-symmetric subsets of $\mathbb P_\mathbb R^n$.
\end{rem}

The following proposition results from the proof of \cite[Theorem 2.5]{Par04}. It is an $\AS$ version of \cite[Théorème 1.4]{Kur88}.
\begin{prop}
The closed $\AS$-subsets of $\mathbb P_\mathbb R^n$ are exactly the closed sets of a noetherian topology on $\mathbb P_\mathbb R^n$.
\end{prop}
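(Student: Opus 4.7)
The plan is to verify that the closed $\AS$-subsets of $\mathbb P^n_\mathbb R$ form the closed sets of some topology (containing $\emptyset$ and $\mathbb P^n_\mathbb R$, stable under finite unions and arbitrary intersections) and then establish the descending chain condition. The arc-extension property passes tautologically to arbitrary intersections, and $\emptyset$, $\mathbb P^n_\mathbb R$ are clearly closed $\AS$. The genuinely nontrivial closure axiom is finite unions; semialgebraicity of an arbitrary intersection is not automatic, but once noetherianity is in hand any such intersection reduces to a finite one.

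For a binary union $A\cup B$ of closed $\AS$-sets and a real analytic arc $\gamma:(-1,1)\to \mathbb P^n_\mathbb R$ with $\gamma((-1,0))\subset A\cup B$, I would use that $\gamma^{-1}(A)$ and $\gamma^{-1}(B)$ are semialgebraic subsets of $(-1,1)$, hence finite unions of points and intervals. Their union contains $(-1,0)$, so one of them, say $\gamma^{-1}(A)$, contains some interval $(-\delta,0)$; the $\AS$-property of $A$ applied to a restriction of $\gamma$ around $0$ then produces $\varepsilon>0$ with $\gamma((0,\varepsilon))\subset A\subset A\cup B$. Closedness and semialgebraicity of $A\cup B$ are immediate.

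For noetherianity I would attach to each closed $\AS$-set an ordinal invariant, most simply the dimension paired (lexicographically) with the tuple of irreducible-component counts in each dimension, and verify that this invariant strictly decreases along any proper inclusion. The crucial input is the existence of a finite decomposition of any closed $\AS$-set into irreducible closed $\AS$-components; this is the $\AS$-analog of Kurdyka's irreducible decomposition and is precisely what is proved in the course of \cite[Theorem 2.5]{Par04}. Granting it, a proper inclusion $A'\subsetneq A$ either strictly decreases the dimension, or preserves it while strictly decreasing the count of top-dimensional components: any top-dimensional irreducible component of $A'$ must coincide with one of $A$, by dimension comparison and irreducibility. Iterating dimension by dimension makes the invariant well-founded, so every descending chain stabilizes.

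The principal obstacle is the $\AS$-irreducible decomposition itself, which is not a formality; its proof follows Kurdyka's template by induction on semialgebraic dimension, using that the Euclidean closure of a connected component of the top-dimensional smooth locus of a closed $\AS$-set is again closed $\AS$. Once this decomposition is established, the topology axioms and the DCC follow by the formal argument sketched above, and semialgebraicity of arbitrary intersections is recovered as a consequence of noetherianity.
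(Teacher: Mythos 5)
Your proposal follows the same route the paper relies on: the paper simply cites the proof of \cite[Theorem 2.5]{Par04}, which in turn adapts Kurdyka's argument from \cite[Th\'eor\`eme 1.4]{Kur88}, i.e.\ verifying closure under finite boolean operations by the one-variable analytic argument, deducing the descending chain condition from a finite $\AS$-irreducible decomposition, and recovering arbitrary intersections afterwards from noetherianity. Two technical points deserve correction. First, for a real analytic arc $\gamma:(-1,1)\to\mathbb P_\mathbb R^n$ and a semialgebraic set $A$, the preimage $\gamma^{-1}(A)$ need \emph{not} be a semialgebraic subset of $(-1,1)$: the zeros of a one-variable analytic function $h\circ\gamma$ (with $h$ polynomial in a chart) can accumulate at the endpoints $\pm1$, as for $\gamma(t)=(\cos(1/(1-t)),\sin(1/(1-t)))$ and $h(x,y)=y$. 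What is true, and what your argument actually uses, is that on any compact subinterval around $0$ the set $\gamma^{-1}(A)$ is a finite union of points and intervals, because the relevant analytic functions then have only finitely many zeros; this local structure near $0^-$ is enough to see that one of $\gamma^{-1}(A)$, $\gamma^{-1}(B)$ contains a left-neighborhood $(-\delta,0)$. Second, the arc-extension condition does \emph{not} pass tautologically to an infinite intersection $\bigcap_i A_i$: each $A_i$ supplies an $\varepsilon_i>0$ with $\gamma((0,\varepsilon_i))\subset A_i$, but the $\varepsilon_i$ need not be bounded below. This is harmless for your overall argument, which correctly reduces arbitrary intersections to finite ones once the DCC is in hand, but the opening sentence should be restricted to finite intersections (where one takes the minimum of the $\varepsilon_i$). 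Beyond these repairs, the real content, the finite $\AS$-irreducible decomposition and the associated dimension-drop for proper inclusions of irreducible closed $\AS$-sets, is exactly what you defer to \cite{Par04}, as does the paper.
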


\begin{defn}
Let $Y,X,F$ be three $\AS$-sets and $p:Y\rightarrow X$ be an $\AS$-map. We say that $p$ is a locally trivial $\AS$-fibration with fibre $F$ if every $x\in X$ admits an open $\AS$ neighborhood $U$ such that $\varphi:p^{-1}(U)\rightarrow U\times F$ is an $\AS$-isomorphism such that the following diagram commutes $$\xymatrix{p^{-1}(U) \ar[rr]^\varphi_\simeq \ar[dr]_p & & U\times F \ar[ld]^{\pr_U} \\ & U &}$$ 
\end{defn}

The following proposition is a direct consequence of the noetherianity of the $\AS$ topology.
\begin{prop}\label{prop:ptfAS}
A locally trivial $\AS$-fibration $p:Y\rightarrow X$ with fibre $F$ is a piecewise trivial fibration with fiber $F$, i.e. we may break $X$ into finitely many $\AS$-sets $X=\bigsqcup_{\alpha=1}^kX_\alpha$ such that $p^{-1}(X_\alpha)$ is $\AS$-isomorphic to $X_\alpha\times F$.
\end{prop}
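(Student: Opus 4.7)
The plan is to combine the hypothesis of local $\AS$-triviality with the quasi-compactness that the preceding proposition grants us, and then turn the resulting finite open cover into a finite partition by $\AS$-subsets.

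First, for each $x \in X$ choose an open $\AS$ neighborhood $U_x$ of $x$ together with a commuting $\AS$-isomorphism $\varphi_x : p^{-1}(U_x) \to U_x \times F$ provided by the local triviality hypothesis. The collection $\{U_x\}_{x\in X}$ is an open cover of $X$ in the $\AS$-topology, which by the previous proposition is noetherian; any noetherian topology is quasi-compact, so we may extract a finite subcover $X = U_1 \cup \cdots \cup U_n$ with associated trivializations $\varphi_i : p^{-1}(U_i) \to U_i \times F$.

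Next I would refine this cover into a partition in the usual way: set $X_1 = U_1$ and, for $2 \le \alpha \le n$,
\[
X_\alpha = U_\alpha \setminus (U_1 \cup \cdots \cup U_{\alpha-1}).
\]
Each $X_\alpha$ is an $\AS$-set since $\AS$ is a boolean algebra (by the remark recalled from \cite{Par04}), and by construction $X = \bigsqcup_{\alpha=1}^n X_\alpha$ with $X_\alpha \subset U_\alpha$. Restrict $\varphi_\alpha$ to the preimage $p^{-1}(X_\alpha)$: the restriction remains a bijection onto $X_\alpha \times F$ which commutes with the projection, and its graph is the intersection of the graph of $\varphi_\alpha$ with the $\AS$-set $p^{-1}(X_\alpha) \times (X_\alpha \times F)$, hence is itself $\AS$. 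Therefore $\varphi_\alpha|_{p^{-1}(X_\alpha)}$ is an $\AS$-isomorphism $p^{-1}(X_\alpha) \xrightarrow{\sim} X_\alpha \times F$, which is exactly the desired piecewise trivialization.

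The only conceptual step is the passage from an open cover to a finite one, which is the reason noetherianity is invoked; once that is in hand, the construction of the partition and the verification that the pieces and the restricted trivializations stay in the $\AS$ category are routine consequences of the boolean algebra structure of $\AS$. I do not foresee a genuine obstacle, only the small bookkeeping point that one must check the restriction of an $\AS$-map to an $\AS$-subset is still $\AS$, which is immediate from the definition via graphs.
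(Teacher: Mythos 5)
Your proof is correct and makes explicit precisely what the paper leaves implicit: the paper gives no detailed argument, merely asserting that the proposition is ``a direct consequence of the noetherianity of the $\AS$ topology,'' and the passage from noetherianity to quasi-compactness, to a finite subcover, and then to a disjointification $X_\alpha = U_\alpha \setminus (U_1\cup\cdots\cup U_{\alpha-1})$ is exactly your argument. The remaining checks (that the pieces $X_\alpha$ stay $\AS$ via the boolean-algebra structure, that $p^{-1}(X_\alpha)=\varphi_\alpha^{-1}(X_\alpha\times F)$ is $\AS$ since $\varphi_\alpha$ is an $\AS$-isomorphism, and that the restricted graph is an intersection of $\AS$-sets) are the routine bookkeeping the paper takes for granted.
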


Nash maps and Nash manifolds were introduced by J. Nash in \cite{Nas52} considering real analytic functions satisfying non-trivial polynomial equations. M. Artin and B. Mazur gave a new description of these objects in \cite{AM65} in terms of maps which can be lifted to polynomial maps on non-singular irreducible real algebraic sets.

The benefits of Nash functions is that they share good algebraic properties with polynomial functions and good geometric properties with real analytic geometry.

\begin{defn}[Nash functions and Nash maps {\cite{Nas52}\cite{BCR}}]
Let $U\subset\mathbb R^d$ be an open semialgebraic subset. A function $f:U\rightarrow\mathbb R$ is said to be Nash if it satisfies one of the two following equivalent conditions:
\begin{enumerate}[nosep]
\item $f$ is semialgebraic and $C^\infty$.
\item $f$ is analytic and satisfies a nontrivial polynomial equation.
\end{enumerate}
A map $f:U\rightarrow\mathbb R^n$ is Nash if its coordinate functions are Nash.
\end{defn}

\begin{rem}
Obviously, the zero locus of a Nash function is an $\AS$-set.
\end{rem}

\begin{defn}
A Nash submanifold of dimension $d$ is a semialgebraic subset $M$ of $\mathbb R^n$ such that for every $x\in M$ there exist an open semialgebraic neighborhood $U$ of $x$ in $\mathbb R^n$, an open semialgebraic neighborhood $V$ of $0$ in $\mathbb R^n$ and a Nash isomorphism $\varphi:U\rightarrow V$ such that $\varphi(x)=0$ et $\varphi(M\cap U)=(\mathbb R^d\times\{0\})\cap V$.
\end{defn}

\begin{eg}[{\cite[Proposition 3.3.11]{BCR}}]
Let $V\subset\mathbb R^d$ be a non-singular\footnote{Every point of $V$ are non-singular in the same dimension, see \cite[Definition 3.3.4]{BCR}.} algebraic set. Then, by the Jacobian criterion \cite[Proposition 3.3.8]{BCR} and the Nash inverse theorem \cite[Proposition 2.9.7]{BCR}, $V$ is a Nash submanifold of $\mathbb R^d$.
\end{eg}

\begin{prop}\label{prop:NashResol}
Let $f:V\rightarrow\mathbb R$ be a Nash function defined on an algebraic set $V$. There exists $\sigma:\tilde V\rightarrow V$ a finite sequence of (algebraic) blowings-up with non-singular centers such that $\tilde V$ is non-singular and $f\circ\sigma:\tilde V\rightarrow\mathbb R$ has only normal crossings\footnote{i.e.every $x\in\tilde V$ admits an open $\AS$-neighborhood such that $f\circ\sigma$ equals a monomial times a nowhere vanishing function on this neighborhood.}.
\end{prop}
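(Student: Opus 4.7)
The approach is to reduce the statement to the standard setting of Hironaka's theorems on resolution of singularities and principalization of ideals, the reduction being carried out through the graph of the Nash function $f$.

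First, I apply Hironaka's resolution of singularities to $V$, obtaining a finite sequence of blowings-up with non-singular algebraic centers $\sigma_0:V_0\to V$ such that $V_0$ is non-singular. Since the conclusion is preserved under replacing $(V,f)$ by $(V_0,f\circ\sigma_0)$, I may henceforth assume that $V$ itself is non-singular. To then exploit the Nash hypothesis on $f$, I consider the Zariski closure $W$ of the graph $\Gamma_f\subset V\times\mathbb{A}^1$. Since $f$ is Nash, it satisfies a non-trivial polynomial identity with coefficients in the coordinate ring of $V$, so $W$ is a proper algebraic subvariety of $V\times\mathbb{A}^1$. The first projection $\pi:W\to V$ restricts to a Nash isomorphism $\Gamma_f\simeq V$, and under this identification $f$ corresponds to the restriction to $W$ of the algebraic coordinate function $t$.

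Next, I apply Hironaka's embedded resolution and principalization theorems to the pair $(W,\{t=0\})$ inside $V\times\mathbb{A}^1$: this produces a sequence $\tau$ of algebraic blowings-up of $V\times\mathbb{A}^1$ with non-singular centers such that the strict transform $\tilde W$ of $W$ is non-singular and the pullback of $t$ to $\tilde W$ has only normal crossings. The irreducible component of $\tilde W$ dominating $\Gamma_f$ then produces a proper modification $\tilde V\to V$ along which $f$ pulls back to a function with only normal crossings, which is precisely the conclusion of the proposition.

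The main technical difficulty lies in this final step: one must verify that the modification $\tilde V\to V$ obtained can itself be realized as a finite sequence of algebraic blowings-up of $V$ with non-singular centers, rather than of the larger ambient space $V\times\mathbb{A}^1$. This may be handled by inductively transporting each blowing-up center of $\tau$ along the Nash identification $\Gamma_f\simeq V$ and taking Zariski closures to recover algebraic non-singular centers on the $V$-side; alternatively, and perhaps more cleanly, one may invoke a Nash/semialgebraic version of Hironaka's algorithm, in the spirit of the functorial constructions of Bierstone--Milman, applied directly to the Nash function $f$ on the non-singular algebraic set $V$.
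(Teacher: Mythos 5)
Your approach — embedding the graph $\Gamma_f$ into $V\times\mathbb{A}^1$, taking its Zariski closure $W$, and applying embedded resolution / principalization there — is genuinely different from the paper's, and it runs into a real obstruction that you yourself flag but do not overcome. The problem is not merely technical: the sequence $\tau$ consists of blowings-up of $V\times\mathbb{A}^1$ with centers living in that ambient space, and there is no canonical way to turn this into a sequence of blowings-up of $V$ alone with non-singular \emph{algebraic} centers. The two repairs you sketch both fail as stated. Transporting a center $C\subset V\times\mathbb{A}^1$ via the Nash identification $\Gamma_f\simeq V$ and then taking Zariski closure can destroy non-singularity (the Zariski closure of a non-singular Nash subset need not be non-singular), and it need not even produce the same modification, since blowing up the closure is not the same as blowing up the image. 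Invoking a Nash or semialgebraic version of Hironaka's algorithm applied directly to $f$ is also insufficient: the centers produced are determined by the stratification of the invariant attached to the Nash function $f$, and these are only $\mathcal{AS}$-sets, not algebraic subsets, so you would obtain Nash blowings-up rather than the \emph{algebraic} blowings-up demanded by the statement.

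The paper's proof sidesteps all of this with a short algebraic observation. Since $f$ is Nash there is a non-trivial polynomial relation $G(x,f(x))=0$ with $G(x,y)=\sum_{i=0}^{p}G_i(x)\,y^{p-i}$, and one may arrange $G_p\neq 0$ (factor out powers of $y$ from $G$ if necessary). Then $G_p(x)=-\sum_{i=0}^{p-1}G_i(x)f(x)^{p-i}$, and every term on the right has $p-i\geq 1$, so $f$ divides $G_p$ in the ring of Nash functions. Now apply the ordinary \emph{algebraic} Hironaka theorem to the \emph{polynomial} $G_p$ on $V$: one gets a finite sequence of algebraic blowings-up with non-singular algebraic centers $\sigma:\tilde V\to V$, with $\tilde V$ non-singular and $G_p\circ\sigma$ normal crossings. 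Since $f\circ\sigma$ is a Nash divisor of $G_p\circ\sigma$, which is locally a monomial times a unit, unique factorization in the local ring of (Nash or analytic) germs forces $f\circ\sigma$ itself to be locally a monomial times a unit. That is the conclusion, with algebraic centers obtained for free because $G_p$ is a polynomial. I would encourage you to replace the graph construction by this divisibility trick: it is both shorter and avoids the descent problem on which your argument currently founders.
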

\begin{proof}
Let $G(x,y)=\sum_{i=0}^pG_i(x)y^{p-i}$ be a non-trivial polynomial such that $G(x,f(x))=0$. We may assume that $G_p\neq0$. Then $f$ divides $G_p$ seen as a Nash function.

There exists a finite sequence of algebraic blowings-up with non-singular centers such that $G_p\circ\sigma$ is monomial, thus $f\circ\sigma$ is monomial.
\end{proof}

\section{A Grothendieck ring}
In this section we adapt the Grothendieck ring introduced in \cite{GLM} to our framework over $\mathbb R$. The classical Grothendieck ring of $\AS$-sets doesn't allow one to get a convolution formula similar to the one of Section \ref{sect:TS} and the lack of real roots of unity prevents us from using an equivariant Grothendieck ring similar to the one of \cite{DL98} or \cite{DL01} as in \cite{DL99-TS}. \\

\begin{defn}
We denote by $K_0(\AS)$ the free abelian group spanned by symbols $[X]$ where $X\in\AS$ with the relations
\begin{enumerate}[ref=\thesubsection.(\arabic*), label=(\arabic*)]
\item If there is a bijection whose graph is $\AS$ between $X$ and $Y$ then $[X]=[Y]$
\item If $Y\subset X$ is a closed $\AS$-subset then $[X\setminus Y]+[Y]=[X]$
\end{enumerate}
Moreover we have a ring structure induced by the cartesian product $[X\times Y]=[X][Y]$. The unit of the sum is $0=[\varnothing]$ and the one of the product is $1=[\mathrm{pt}]$.
\end{defn}

\begin{rem}
The group $K_0(\AS)$ is well-defined since $\AS$ is a set.
\end{rem}

\begin{rem}
If $A,B\in\AS$ then $[A\sqcup B]=[A]+[B]$. We may prove this observation using that an $\AS$-set may be written as a finite disjoint union of locally closed $\AS$-sets.
\end{rem}

We denote by $\mathbb L_{\AS}=[\mathbb R]$ the class of the affine line and by $\M_{\AS}=K_0(\AS)\left[\mathbb L_{\AS}^{-1}\right]$ the localization by the class of the affine line. \\

We define the category $\Var^n_\mon$ whose objects are $\AS$-sets $X$ endowed with an $\AS$ action of $\mathbb R^*$ (i.e. the graph of the map $\mathbb R^*\times X\rightarrow X$ defined by $(\lambda,x)\mapsto\lambda\cdot x$ is $\AS$) together with an $\AS$-map $\varphi_X:X\rightarrow\mathbb R^*$ such that $\varphi_X(\lambda\cdot x)=\lambda^n\varphi_X(x)$. \\%In order to be able to work locally, we request the action to be good, i.e. each orbit must be included in an affine open set. \\
A morphism is an equivariant $\AS$ map $f:X\rightarrow Y$ over $\mathbb R^*$, i.e. $f(\lambda\cdot x)=\lambda\cdot f(x)$ and the following diagram commutes $$\xymatrix{X \ar[rr]^f \ar[rd]_{\varphi_X} & & Y \ar[dl]^{\varphi_Y} \\ & \mathbb R^* & }$$

\begin{defn}\label{defn:K0n}
We denote by $K_0^n$ the free abelian group spanned by the symbols $\left[\varphi_X:\mathbb R^*\acts X\rightarrow\mathbb R^*\right]$ where $\varphi_X:\mathbb R^*\acts X\rightarrow\mathbb R^*\in\Var^n_\mon$ with the relations
\begin{enumerate}[ref=\ref{defn:K0n}.(\arabic*), label=(\arabic*)]
\item If there is $f:X\rightarrow Y$ an equivariant bijection over $\mathbb R^*$ whose graph is $\AS$, i.e. $$\xymatrix{X \ar[rr]^f_\simeq \ar[rd]_{\varphi_X} & & Y \ar[dl]^{\varphi_Y} \\ & \mathbb R^* &}$$ then $\left[\varphi_X:\mathbb R^*\acts X\rightarrow\mathbb R^*\right]=\left[\varphi_Y:\mathbb R^*\acts Y\rightarrow\mathbb R^*\right]$
\item If $Y$ is a closed $\AS$-subset of $X$ invariant by the action of $\mathbb R^*$ then $$\left[\varphi_X:\mathbb R^*\acts X\rightarrow\mathbb R^*\right]=\left[\varphi_{X|X\setminus Y}:\mathbb R^*\acts X\setminus Y\rightarrow\mathbb R^*\right]+\left[\varphi_{X|Y}:\mathbb R^*\acts Y\rightarrow\mathbb R^*\right]$$
\item \label{item:relationaffine} Let $\varphi:\mathbb R^*\acts_{\tau}Y\rightarrow\mathbb R^*\in\Var^n_\mon$ and $\psi=\varphi\circ\pr_Y:Y\times\mathbb R^m\rightarrow\mathbb R^*$. Let $\sigma$ and $\sigma'$ be two actions on $Y\times\mathbb R^m$ that are two liftings of $\tau$ on $Y$, i.e. $\pr_Y(\lambda\cdot_\sigma x)=\pr_Y(\lambda\cdot_{\sigma'}x)=\lambda\cdot_\tau\pr_Y(x)$. Then $\psi:\mathbb R^*\acts_{\sigma}(Y\times\mathbb R^m)\rightarrow\mathbb R^*$ and $\psi:\mathbb R^*\acts_{\sigma'}(Y\times\mathbb R^m)\rightarrow\mathbb R^*$ are in $\Var^n_\mon$ and we add the relation $\left[\psi:\mathbb R^*\acts_{\sigma}(Y\times\mathbb R^m)\rightarrow\mathbb R^*\right]=\left[\psi:\mathbb R^*\acts_{\sigma'}(Y\times\mathbb R^m)\rightarrow\mathbb R^*\right]$.\footnote{This relation will allow us to focus on the angular component, we can move out the other coefficients, e.g. in the rationality formula.}
\end{enumerate}
Moreover, $K_0^n$ has a ring structure given by the fiber product over $\mathbb R^*$ where the action is diagonal. Furthermore the class $\mathbb 1_n=[\id:\mathbb R^*\acts\mathbb R^*\rightarrow \mathbb R^*]$, where $\mathbb R^*$ acts on $\mathbb R^*$ by $\lambda\cdot a=\lambda^na$, is the unit of this product. \\
Finally, the cartesian product induces a structure of $K_0(\AS)$-algebra by $$K_0(\AS)\rightarrow K_0^n,\,[A]\mapsto[A]\cdot\mathbb1_n=[A\times\mathbb R^*\rightarrow\mathbb R^*]$$ where the action of $\mathbb R^*$ on $A$ is trivial. \\
Particularly we set $\mathbb L_n=\mathbb L_{\AS}\mathbb 1_n=[\pr_{\mathbb R^*}:\mathbb R\times\mathbb R^*\rightarrow \mathbb R^*]\in K_0^n$ and $\M^n=K_0^n\left[\mathbb L_n^{-1}\right]$.
\end{defn}

\begin{notation}
We may simply denote $\left[\varphi_X:\mathbb R^*\acts_\sigma X\rightarrow\mathbb R^*\right]$ by $\left[\varphi_X,\sigma\right]$.
\end{notation}

We consider the directed partial order $\prec$ on $\mathbb N\setminus\{0\}$ defined by $n\prec m\Leftrightarrow\exists k\in\mathbb N\setminus\{0\},n=km$. For $n\prec m$, we define the morphism $\theta_{mn}:\Var_\mon^m\rightarrow\Var_\mon^n$ which keeps the same object, the same morphism but which replaces the action by $\lambda\cdot_n x=\lambda^k\cdot_mx$. Thus $(\Var_\mon^n)_{n\ge1}$ is an inductive system and we set $\Var_\mon=\varinjlim\Var_\mon^n$. We define $K_0=\varinjlim K_0^n$ and $\M=\varinjlim\M^n$ in the same way. \\
Thereby $K_0$ has a natural structure of $K_0(\AS)$-algebra and $\M$ has a natural structure of $\M_{\AS}$-algebra. The product unit of $K_0$ is $\mathbb 1=\varinjlim\mathbb 1_n\in K_0$. 
We notice that $\varinjlim\mathbb L_n=\mathbb L_{\AS}\mathbb1\in K_0$ (for the scalar multiplication) and we denote this class by $\mathbb L$. We also notice that $\mathcal M=K_0\left[\mathbb L^{-1}\right]$.

\begin{rem}
$\mathbb L=\left[\pr_{\mathbb R^*}:\mathbb R\times\mathbb R^*\rightarrow\mathbb R^*\right]$
\end{rem}

We have a forgetful morphism of $K_0(\AS)$-modules $\overline{\vphantom{1em}\ \cdot\ }:K_0^n\rightarrow K_0(\AS)$ induced by $$[\varphi_X:\mathbb R^*\acts X\rightarrow\mathbb R^*]\in K_0^n\mapsto [X]\in K_0(\AS)$$ This morphism isn't compatible with the ring structures as shown in the next example. It extends to a morphism of $K_0(\AS)$-modules $\overline{\vphantom{1em}\ \cdot\ }:K_0\rightarrow K_0(\AS)$ and to morphisms of $\M_\AS$-modules $\overline{\vphantom{1em}\ \cdot\ }:\M^n\rightarrow\M_{\AS}$ and $\overline{\vphantom{1em}\ \cdot\ }:\M\rightarrow\M_{\AS}$.

\begin{eg}
$\overline{\mathbb 1}=[\mathbb R^*]=\mathbb L_{\AS}-1\neq 1\in K_0(\AS)$
\end{eg}

\begin{rem}
Let $A\in K_0$ and $n\in\mathbb N$ so that $\frac{A}{\mathbb L^n}\in\M$ then $\overline{\left(\frac{A}{\mathbb L^n}\right)}=\frac{\overline A}{\mathbb L_{\AS}^n}\in\M_{\AS}$.
\end{rem}

\section{The motivic local zeta function}
In \cite{DL98} and \cite{DL02}, J. Denef and F. Loeser introduced and studied a motivic global zeta function and defined the motivic Milnor fiber as a limit of this zeta function. In their framework, the realizations of the motivic Milnor fiber and the classical Milnor fiber coincide for the known additive invariants. \\
In real geometry, a similar work was first initiated by S. Koike and A. Parusiński \cite{KP03} using the Euler characteristic with compact support. They defined a naive motivic local zeta function for real analytic functions. They also introduced a positive and a negative zeta function in order to study the equivariant side. Next, G. Fichou \cite{Fic05} defined similar zeta functions of Nash functions using the virtual Poincaré polynomial. These constructions are used to classify real singularities respectively in terms of blow-analytic \cite{Kuo79,Kuo85} and blow-Nash equivalences. \\
Following J. Denef and F. Loeser, as well as G. Guibert, F. Loeser and M. Merle \cite{GLM}, we introduce a motivic local zeta function for Nash germs with coefficients in $\M$. This way, we obtain a richer zeta function which, in particular, encodes the equivariant aspects. By means of a resolution, we show that this zeta function is rational as all of the above-cited zeta functions. Finally, we shall exhibit the links with the zeta functions of S. Koike and A. Parusiński as well with those of G. Fichou.

\subsection{Definition}
\begin{defns}
For $M$ a Nash manifold, we set $$\mathcal L(M)=\left\{\gamma:(\mathbb R,0)\rightarrow M,\,\gamma\text{ real analytic}\right\}$$ and $$\mathcal L_n(M)=\quotient{\mathcal L(M)}{\sim_n}$$ where $\gamma_1\sim_n\gamma_2\Leftrightarrow\gamma_1\equiv\gamma_2\mod t^{n+1}$ in a local Nash coordinate system around $\gamma_1(0)=\gamma_2(0)$.

We have truncation maps $\pi_n:\mathcal L(M)\rightarrow\mathcal L_n(M)$ and $\pi^m_n:\mathcal L_m(M)\rightarrow\mathcal L_n(M)$ where $m\ge n$. These maps are surjective.

J. Nash first studied truncation of arcs on algebraic varieties in order to study singularities in 1964 \cite{Nas95}. They were then studied by K. Kurdyka, M. Lejeune-Jalabert, A. Nobile, M. Hickel and many others. They are a centerpiece of motivic integration developped by M. Kontsevich and then by J. Denef and F. Loeser.

If $h:M\rightarrow N$ is Nash, then $h_*:\mathcal L(M)\rightarrow\mathcal L(N)$ and $h_{*n}:\mathcal L_n(M)\rightarrow\mathcal L_n(N)$ are well-defined and the following diagram commutes $$\xymatrix{\mathcal L(M) \ar[r]^{h_{*}} \ar@{->>}[d]_{\pi_m} & \mathcal L(N) \ar@{->>}[d]^{\pi_m} \\ \mathcal L_m(M) \ar[r]^{h_{*m}} \ar@{->>}[d]_{\pi^m_n} & \mathcal L_m(N) \ar@{->>}[d]^{\pi^m_n} \\ \mathcal L_n(M) \ar[r]^{h_{*n}} & \mathcal L_n(N)}$$
\end{defns}

We refer the reader to \cite[\S2.4]{jbc1} for the properties of $\mathcal L_n(M)$ and $\mathcal L(M)$. \\

Let $f:(\mathbb R^d,0)\rightarrow(\mathbb R,0)$ be a Nash germ and let $$\X_n(f)=\left\{\gamma\in\mathcal L_n(\mathbb R^d),\,\gamma(0)=0,\,f(\gamma(t))\equiv ct^n\mod t^{n+1},\,c\neq0\right\}\text{ for $n\ge1$}$$

Then $\X_n(f)$ is Zariski-constructible and $\left[\X_n(f)\right]$ is well-defined in $K_0^n$ by the morphism $\varphi:\X_n(f)\rightarrow\mathbb R^*$ with $\varphi(\gamma)=\ac(f\gamma)=c$ and the action of $\mathbb R^*$ given by $\lambda\cdot\gamma(t)=\gamma(\lambda t)$. \\

\begin{defn}
The motivic local zeta function of $f$ is defined by $$Z_f(T)=\sum_{n\ge1}\left[\X_n(f)\right]\mathbb L^{-nd}T^n\in\M[[T]]$$
\end{defn}

\begin{eg}
Let $f_k^\varepsilon=\varepsilon x^k$ where $\varepsilon\in\{\pm1\}$. Then $$Z_{f_k^\varepsilon}(T)=[f_k^\varepsilon:\mathbb R^*\rightarrow\mathbb R^*]\frac{\mathbb L^{-1}T^k}{\mathbb 1-\mathbb L^{-1}T^k}$$
\end{eg}
\begin{proof}
Obviously $$\left[\X_{n}(f_k^\varepsilon)\right]\mathbb L^{-n}=\left\{\begin{array}{ll}[f_k^\varepsilon:\mathbb R^*\rightarrow\mathbb R^*]\mathbb L^{-q}&\text{ if $n=kq$}\\0&\text{ otherwise}\end{array}\right.$$
\end{proof}

\subsection{Link with previously defined motivic real zeta functions}
\subsubsection{Koike--Parusiński zeta functions}
\begin{defn}
We denote by $K_0(SA)$ the free abelian group spanned by symbols $[X]$ where $X$ is semialgebraic with the relations
\begin{enumerate}[ref=\thesubsection.(\arabic*), label=(\arabic*)]
\item If there is a semialgebraic homeomorphism $X\rightarrow Y$ then $[X]=[Y]$.
\item If $Y\subset X$ is closed-semialgebraic then $[X\setminus Y]+[Y]=[X]$.
\end{enumerate}
Moreover we have a ring structure induced by the cartesian product
\begin{enumerate}[resume,ref=\thesubsection.(\arabic*), label=(\arabic*)]
\item $[X\times Y]=[X][Y]$.
\end{enumerate}
\end{defn}

\begin{rem}[\cite{Qua01}]
The Grothendieck ring of semialgebraic sets up to semialgebraic homeomorphisms is isomorphic to $\mathbb Z$ via the Euler characteristic with compact support, thus every additive invariant factorises through the Euler characteristic with compact support.
\end{rem}

\begin{notation}
We set $\mathbb L_{SA}=[\mathbb R]\in K_0(SA)$ and $\M_{SA}=K_0(SA)\left[\mathbb L_{SA}^{-1}\right]$.
\end{notation}

\begin{rem}
$K_0(SA)\simeq\mathbb Z\simeq\M_{SA}$
\end{rem}

\begin{rem}
The cartesian product induces a structure of $K_0(\AS)$-module (resp. $\M_{\AS}$-module) on $K_0(SA)$ (resp. $\M_{SA}$).
\end{rem}

\begin{prop}
The maps
$$F^>:\begin{array}{ccc}\Var_\mon^n&\longrightarrow&SA \\ (X,\sigma,\varphi:X\rightarrow\mathbb R^*)&\longmapsto&\varphi^{-1}(\mathbb R_{>0})\end{array}$$
$$F^<:\begin{array}{ccc}\Var_\mon^n&\longrightarrow&SA \\ (X,\sigma,\varphi:X\rightarrow\mathbb R^*)&\longmapsto&\varphi^{-1}(\mathbb R_{<0})\end{array}$$
induce morphisms of $K_0(\AS)$-modules (resp. $\M_{\AS}$-modules)
$$F^>:K_0\rightarrow K_0(SA)\quad\quad\quad\text{(resp. }F^>:\M\rightarrow \M_{SA}\text{)}$$
$$F^<:K_0\rightarrow K_0(SA)\quad\quad\quad\text{(resp. }F^<:\M\rightarrow \M_{SA}\text{)}$$
\end{prop}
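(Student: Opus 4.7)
The plan is to verify that the set-theoretic assignment $(X,\sigma,\varphi_X)\mapsto\varphi_X^{-1}(\mathbb R_{>0})$ (resp.\ $\mathbb R_{<0}$) factors through the three defining relations of $K_0^n$ listed in Definition \ref{defn:K0n}, is compatible with the $K_0(\AS)$-module structure, passes to the direct limit $K_0=\varinjlim K_0^n$, and finally extends to the localization $\M$. First note that $\varphi_X^{-1}(\mathbb R_{>0})$ is semialgebraic as the preimage of a semialgebraic set by the semialgebraic map $\varphi_X$, so it defines a class in $K_0(SA)$; the whole argument for $F^<$ is the same up to replacing $\mathbb R_{>0}$ by $\mathbb R_{<0}$.

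For the additivity relation, if $Y\subset X$ is a closed $\AS$-subset stable under $\mathbb R^*$, then $(\varphi_X|_Y)^{-1}(\mathbb R_{>0})=\varphi_X^{-1}(\mathbb R_{>0})\cap Y$ is closed-semialgebraic in $\varphi_X^{-1}(\mathbb R_{>0})$ with open complement $(\varphi_X|_{X\setminus Y})^{-1}(\mathbb R_{>0})$, so the decomposition relation in $K_0(SA)$ applies. For the third relation on products, $(\varphi\circ\pr_Y)^{-1}(\mathbb R_{>0})=\varphi^{-1}(\mathbb R_{>0})\times\mathbb R^m$ only sees the function $\varphi$ and not the lifted action, so the classes attached to $\sigma$ and $\sigma'$ coincide tautologically in $K_0(SA)$.

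The delicate point is the first relation: an equivariant $\AS$-bijection $f:X\rightarrow Y$ over $\mathbb R^*$ restricts to an $\AS$-bijection $\varphi_X^{-1}(\mathbb R_{>0})\rightarrow\varphi_Y^{-1}(\mathbb R_{>0})$, but this is in general not a semialgebraic homeomorphism, so the equality of classes in $K_0(SA)$ is not immediate from its defining relations. To get around this, I will invoke the isomorphism $K_0(SA)\simeq\mathbb Z$ furnished by the Euler characteristic with compact support $\chi_c$ (the remark just after the definition of $K_0(SA)$), combined with the classical fact that $\chi_c$ factors through $K_0(\AS)$ and is thus invariant under $\AS$-bijections. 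Both restricted preimages hence share the same $\chi_c$ and therefore the same class in $K_0(SA)$. This is the main obstacle; everything else is bookkeeping.

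Finally, compatibility with the $K_0(\AS)$-module structure follows from the fact that on a product $A\times X$ (with trivial action on $A$) the preimage of $\mathbb R_{>0}$ is $A\times\varphi_X^{-1}(\mathbb R_{>0})$, matching the $K_0(\AS)$-action on $K_0(SA)$ induced by the forgetful map. The transition morphisms $\theta_{mn}$ preserve $\varphi$, so the individual morphisms $K_0^n\rightarrow K_0(SA)$ assemble into $F^>:K_0\rightarrow K_0(SA)$. For the extension to $\M$, one computes $F^>(\mathbb L)=[\mathbb R\times\mathbb R_{>0}]$, whose image in $\mathbb Z\simeq K_0(SA)$ is $\chi_c(\mathbb R)\chi_c(\mathbb R_{>0})=1$, so it is already a unit; hence $F^>$ extends uniquely to $\M\rightarrow\M_{SA}$, and $\M_{\AS}$-linearity follows from $K_0(\AS)$-linearity by the universal property of localization.
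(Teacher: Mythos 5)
Your overall plan — check the three defining relations of $K_0^n$, then the module structure, the direct limit, and the localization — is the right one, and most of the bookkeeping (additivity, the affine-lift relation, the $K_0(\AS)$-scalar action via products, the transition maps $\theta_{mn}$) is handled correctly. The paper states this proposition without proof, so there is no direct comparison to make, but your argument does contain one genuine slip in the step you yourself identify as delicate.

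You write that an equivariant $\AS$-bijection $f:X\to Y$ restricts to an $\AS$-bijection $\varphi_X^{-1}(\mathbb R_{>0})\to\varphi_Y^{-1}(\mathbb R_{>0})$, and then invoke invariance of $\chi_c$ under $\AS$-bijections. But $\mathbb R_{>0}$ is not an $\AS$-subset of $\mathbb R$ (the arc $t\mapsto -t$ has $\gamma((-1,0))\subset\mathbb R_{>0}$ yet $\gamma((0,\varepsilon))\subset\mathbb R_{<0}$), so $\varphi_X^{-1}(\mathbb R_{>0})$ is in general only semialgebraic, not $\AS$, and the restricted map is only a semialgebraic bijection — "$\AS$-bijection" does not apply. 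The correct statement to invoke is that $\chi_c$ is invariant under arbitrary \emph{semialgebraic} bijections (not merely semialgebraic homeomorphisms): given a semialgebraic bijection, one chooses a cell decomposition of the graph fine enough that the projections to source and target are homeomorphisms on each cell (the same refinement argument the paper sketches for the virtual Poincaré polynomial in the remark after the McCrory--Parusi\'nski theorem, but run in the purely semialgebraic category), and additivity then gives equality of classes in $K_0(SA)$. Equivalently, combine this with the Quarez isomorphism $K_0(SA)\simeq\mathbb Z$ to conclude via $\chi_c$. Once this is the fact you cite, your step goes through.

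A smaller point on the extension to the localization: the relevant criterion is not that $F^>(\mathbb L)$ be a unit, since $F^>$ is only a module map and $F^>(\mathbb L\cdot a)\neq F^>(\mathbb L)\,F^>(a)$ in general. What matters is that the scalar $\mathbb L_{\AS}$ acts invertibly on the target, i.e.\ that $\mathbb L_{SA}=[\mathbb R]=-1$ is already a unit in $K_0(SA)\simeq\mathbb Z$; then the $K_0(\AS)$-linear map $F^>$ extends to the localizations by the universal property. Your computation $F^>(\mathbb L)=[\mathbb R\times\mathbb R_{>0}]\mapsto 1$ is numerically correct but it is the invertibility of $\mathbb L_{SA}$, not of $F^>(\mathbb L)$, that is the operative fact. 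With these two repairs the proof is sound.
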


\begin{rem}
These morphisms are not compatible with the ring structures. \\
Particularly, the following computation shows that the unit is not mapped to the unit $$\chi_c(F^{>}(\mathbb 1))=\chi_c(\mathbb R_{>0})=-1\neq1=\chi_c(\mathrm{pt})$$
\end{rem}

\begin{rem}
Given a rational fraction in $\M[[T]]$, we can't directly apply the forgetful morphism or the morphisms $F^>,F^<$ to the coefficients in the numerator and the denominator. We first have to develop it in series.

For example, whereas $\sum_{n\ge1}\mathbb 1T^n=\frac{T}{\mathbb 1-T}\in K_0[[T]]$, we have $$\sum_{n\ge1}\overline{\mathbb 1}T^n=(\mathbb L_{\AS}-1)\frac{T}{1-T}\neq\frac{(\mathbb L_{\AS}-1)T}{(\mathbb L_{\AS}-1)-(\mathbb L_{\AS}-1)T}\in K_0(\AS)[[T]]$$
This phenomenon is due to the fact that these morphisms are not compatible with the ring structures.
\end{rem}

\begin{rem}
Let $A\in K_0$ then $F^{\varepsilon}\left(\frac{A}{\mathbb L^n}\right)=\frac{F^{\varepsilon}(A)}{\mathbb L_{SA}^n}$ where $\varepsilon\in\{<,>\}$.
\end{rem}

\begin{prop}
Given $f:(\mathbb R^d,0)\rightarrow(\mathbb R,0)$ a Nash germ, we recover from $Z_f(T)$ the motivic zeta functions considered by S. Koike and A. Parusiński \cite{KP03} applying the previous morphisms and the Euler characteristic with compact support at each coefficient:
$$Z_f^{\chi_c}(T)=\sum_{n\ge1}\chi_c\left(\overline{\left[\X_n(f)\right]}\right)(-1)^{nd}T^n\in\mathbb Z[[T]]$$
and
$$Z_f^{\chi_c,\varepsilon}(T)=\sum_{n\ge1}\chi_c\left(F^\varepsilon\left(\left[\X_n(f)\right]\right)\right)(-1)^{nd}T^n\in\mathbb Z[[T]]$$ where $\varepsilon=>,<$. \\
\end{prop}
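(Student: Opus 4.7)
The plan is to verify the two identities term-by-term, by chasing the $n$-th coefficient $[\X_n(f)]\mathbb{L}^{-nd}$ through the successive module morphisms and then through $\chi_c$. The coefficientwise approach is forced on us because, as highlighted in an earlier remark, neither $\overline{\phantom{\cdot}}$ nor $F^\varepsilon$ is a ring morphism, so we cannot simply apply them to $Z_f(T)$ as a rational fraction.

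First I would match our truncation sets with those of Koike--Parusi\'nski. By the definition, $\X_n(f)$ consists of truncated arcs $\gamma$ with $\gamma(0)=0$ and $f(\gamma(t))\equiv ct^n\mod t^{n+1}$ with $c\neq0$. Forgetting the $\mathbb{R}^*$-action and the angular component $\varphi(\gamma)=c$, the underlying $\AS$-set (and hence the underlying semialgebraic set) is exactly the variety appearing in the naive zeta function of \cite{KP03}. Moreover, by definition of $F^>$ (resp. $F^<$), $F^>([\X_n(f)])$ (resp. $F^<([\X_n(f)])$) is the semialgebraic subset of $\X_n(f)$ consisting of arcs with $c>0$ (resp. $c<0$), which is precisely the set used to define $Z_f^{\chi_c,+}$ (resp. $Z_f^{\chi_c,-}$) in \cite{KP03}.

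Next I would handle the factor $\mathbb{L}^{-nd}$. By the remarks already established, $\overline{[\X_n(f)]\mathbb{L}^{-nd}}=\overline{[\X_n(f)]}\,\mathbb{L}_{\AS}^{-nd}$ in $\M_{\AS}$, and similarly $F^\varepsilon([\X_n(f)]\mathbb{L}^{-nd})=F^\varepsilon([\X_n(f)])\,\mathbb{L}_{SA}^{-nd}$ in $\M_{SA}$. Since $\chi_c:\M_{SA}\to\mathbb{Z}$ is a ring isomorphism with $\chi_c(\mathbb{L}_{SA})=\chi_c(\mathbb{R})=-1$, we get $\chi_c(\mathbb{L}_{SA}^{-nd})=(-1)^{nd}$; the same holds after the forgetful morphism because $\chi_c(\mathbb{L}_{\AS})=-1$ as well. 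Putting these together, the $n$-th coefficient $[\X_n(f)]\mathbb{L}^{-nd}$ is sent to $\chi_c(\overline{[\X_n(f)]})(-1)^{nd}$ by $\chi_c\circ\overline{\phantom{\cdot}}$ and to $\chi_c(F^\varepsilon([\X_n(f)]))(-1)^{nd}$ by $\chi_c\circ F^\varepsilon$, which are exactly the coefficients of $Z_f^{\chi_c}(T)$ and $Z_f^{\chi_c,\varepsilon}(T)$ respectively.

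No serious obstacle is expected: the argument is purely definitional, reducing to two checks, namely (i) that the truncated arc spaces underlying $\X_n(f)$, $F^>([\X_n(f)])$ and $F^<([\X_n(f)])$ coincide with the spaces Koike and Parusi\'nski consider, and (ii) that $\chi_c$ is multiplicative in $\mathbb{L}_{SA}^{\pm 1}$ with value $-1$. The mildly delicate point is only bookkeeping: to remember that the morphisms must be applied coefficient by coefficient and not to the rational fraction $Z_f(T)$ itself.
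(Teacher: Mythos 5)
Your proposal is correct and takes essentially the same (definitional) route as the paper: the paper states this proposition without proof, treating it as immediate from the definitions of $\X_n(f)$, the forgetful morphism, $F^\varepsilon$, the remark that these module morphisms commute with division by $\mathbb L$, and the fact that $\chi_c(\mathbb R)=-1$. The only slip is notational: you write $Z_f^{\chi_c,+}$ and $Z_f^{\chi_c,-}$, whereas the proposition (and the convention of this paper, which reserves $F^\pm$ for the fibers over $\pm1$ and $F^{>},F^{<}$ for the half-line preimages) uses $Z_f^{\chi_c,>}$ and $Z_f^{\chi_c,<}$.
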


\subsubsection{Fichou zeta functions}
C. McCrory and A. Parusiński \cite{MP03} proved that there exists a unique additive invariant of real algebraic varieties which coincides with the Poincaré polynomial for compact non-singular varieties. This construction relies on the weak factorization theorem \cite{AKMW} in order to describe the Grothendieck ring of real algebraic varieties in terms of blowings-up. Then G. Fichou \cite{Fic05} extended this construction to $\AS$-sets up to Nash isomorphisms. Using an extension theorem of F. Guillén and V. Navarro Aznar \cite{GNA}, C. McCrory and A. Parusiński \cite{MP03} proved the virtual Poincaré polynomial is in fact an invariant of $\AS$-sets up to bijections with $\AS$ graph.

\begin{thm}[The virtual Poincaré polynomial for $\AS$-sets \cite{MP03}\cite{Fic05}\cite{MP11}]
There is a unique map $\beta:\AS\rightarrow\mathbb Z[u]$ which factorises through $K_0(\AS)$ as a ring morphism $\beta:K_0(\AS)\rightarrow\mathbb Z[u]$,
$$\xymatrix{\AS \ar[rr]^\beta \ar[rd] & &\mathbb Z[u] \\ & K_0(\AS) \ar[ru]_{\beta} &}$$
such that
\begin{itemize}
\item If $X\in\AS$ is non-empty, then $\deg\beta(X)=\dim X$ and the leading coefficient is positive.
\item If $X\in\AS$ is compact and non-singular, $\beta(X)=\sum_i\dim H_i(X,\mathbb Z_2)u^i$.
\end{itemize}
\end{thm}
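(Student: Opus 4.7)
The plan is to prove existence and uniqueness separately, following the strategy already carried out in \cite{MP03,Fic05,MP11}. For \emph{uniqueness}, the key observation is that any $\AS$-set admits a finite stratification into locally closed $\AS$-subsets that are non-singular, and by further compactifying and resolving (using Hironaka-type resolution and Proposition~\ref{prop:NashResol}) one can express the class of any $\AS$-set in $K_0(\AS)$ as a $\mathbb{Z}$-linear combination of classes of compact non-singular real algebraic varieties. Since $\beta$ is prescribed on such varieties and is required to be additive and multiplicative, its value on any $\AS$-set is then forced.

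For \emph{existence} I would proceed in two stages. First, work in the Grothendieck ring $K_0(\operatorname{Var}_\mathbb{R})$ of real algebraic varieties. By the weak factorization theorem \cite{AKMW}, this ring admits a presentation by compact non-singular varieties, with relations coming from blowings-up $\tilde X \to X$ with non-singular centers $Y$ and exceptional divisors $E$, namely $[\tilde X]-[E]=[X]-[Y]$. Define $\beta$ on a compact non-singular $X$ by the classical $\mathbb{Z}_2$-Poincar\'e polynomial; the blow-up relation is preserved thanks to the description of the mod-$2$ cohomology of a blow-up as a projective bundle over the center minus the center, which is a purely homological computation. This yields $\beta:K_0(\operatorname{Var}_\mathbb{R})\to\mathbb{Z}[u]$.

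Second, extend $\beta$ to $K_0(\AS)$. This is the technical heart and would be done by invoking the extension theorem of Guill\'en--Navarro Aznar \cite{GNA}, which takes a cohomological functor defined on compact non-singular varieties satisfying a cubical descent axiom for blowings-up and extends it to a wider category. One applies it to the category whose morphisms are $\AS$-bijections: the point is that any two compactifications/resolutions of a given $\AS$-set differ, after further blowings-up, by $\AS$-bijections, so the descent hypotheses are satisfied. The additive invariant one gets coincides with $\beta$ on compact non-singular varieties and extends across all $\AS$-bijections, hence factors through $K_0(\AS)$ as a ring morphism.

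Finally, for the degree and leading-coefficient statement, argue by induction on dimension. For compact non-singular $X$ of pure dimension $d$, $\dim H_d(X,\mathbb{Z}_2)$ counts connected components and is positive, so $\deg\beta(X)=d$ with positive leading coefficient. For a general non-empty $X\in\AS$, stratify so that the top-dimensional stratum $S$ is a non-singular Nash manifold of dimension $\dim X$, write $[X]=[S]+[X\setminus S]$, and use that $\dim(X\setminus S)<\dim X$ together with additivity to conclude that the leading term of $\beta(X)$ comes from $\beta(S)$. The main obstacle is the descent step in the second stage: one must verify that the Guill\'en--Navarro Aznar axioms indeed apply in the $\AS$ framework rather than only up to Nash isomorphism, which is exactly the content of \cite{MP11}.
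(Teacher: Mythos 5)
The paper does not prove this theorem; it cites \cite{MP03}, \cite{Fic05}, and \cite{MP11}, and the accompanying Remark only recalls one piece of the argument, namely why $\beta$ is an invariant of $\AS$-bijections. That piece is proved by a stratification argument, not by descent: given an $\AS$-bijection $f:X\rightarrow Y$, one refines a decomposition of $X$ into locally compact $\AS$-pieces, using the $\AS$-closure and noetherianity of the $\AS$-topology, until $f$ and $f^{-1}$ are both continuous on each piece; one then invokes the invariance of $\beta$ under semialgebraic homeomorphisms of locally compact $\AS$-sets (\cite[Proposition 4.3]{MP11}) and concludes by additivity.

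Your sketch has the right outline for the algebraic stage (weak factorization plus the mod-$2$ cohomology of a blow-up), and the uniqueness and degree arguments are fine. The gap is in the second stage. You claim to apply the Guill\'en--Navarro Aznar extension theorem ``to the category whose morphisms are $\AS$-bijections,'' with the descent hypothesis verified because compactifications/resolutions differ by $\AS$-bijections after further blow-ups. That is not how GNA is used. The GNA framework works with algebraic (or Nash) morphisms and cubical hyperresolutions; it cannot take $\AS$-bijections as its morphisms, and the existence of further blow-ups relating two resolutions does not by itself verify the descent axioms. What GNA actually produces in \cite{MP11} is a weight filtration/spectral sequence, from which one derives invariance under semialgebraic homeomorphisms of locally compact $\AS$-sets; passing from that to arbitrary $\AS$-bijections is precisely the stratification argument recalled in the paper's Remark, which is absent from your outline. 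You should replace the appeal to GNA for the $\AS$-invariance step with this explicit reduction.
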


\begin{rem}
We recall the argument of the proof of \cite[Theorem 4.6]{MP11} which explains why the virtual Poincaré polynomial is an invariant of $\AS$-sets up to $\AS$-isomorphism. Let $f:X\rightarrow Y$ be an $\AS$-isomorphism (i.e. a bijection whose graph is $\AS$). First we may break $X$ into a finite decomposition of locally compact $\AS$-sets, $X=\sqcup X_i$. Since $f:X_i\rightarrow f(X_i)$ is semialgebraic we may break $X_i$ into a finite decomposition of semialgebraic sets $X_i=\sqcup X_{ij}$, where $f:X_{ij}\rightarrow f(X_{ij})$ is continuous. As explained in the proof of \cite[Theorem 4.6]{MP11}, we may assume that $X_{ij}\in\AS$ using the $\AS$-closure and the noetherianity of the $\AS$-topology. Now we repeat these arguments to $f^{-1}:f(X_{ij})\rightarrow X_{ij}$ in order to get a finite decomposition $X=\sqcup X_{ijk}$ of $X$ into locally compact $\AS$-sets such that $f:X_{ijk}\rightarrow f(X_{ijk})$ is a homeomorphism whose graph is $\AS$. By \cite[Proposition 4.3]{MP11}, $\beta(X_{ijk})=\beta(f(X_{ijk}))$. We conclude using the additivity of the virtual Poincaré polynomial.
\end{rem}

\begin{prop}
The maps
$$F^+:\begin{array}{ccc}\Var_\mon^n&\longrightarrow&\AS \\ (X,\sigma,\varphi:X\rightarrow\mathbb R^*)&\longmapsto&\varphi^{-1}(1)\end{array}$$
$$F^-:\begin{array}{ccc}\Var_\mon^n&\longrightarrow&\AS \\ (X,\sigma,\varphi:X\rightarrow\mathbb R^*)&\longmapsto&\varphi^{-1}(-1)\end{array}$$
induce morphisms of $K_0(\AS)$-algebras (resp. $\M_\AS$-algebras)
$$F^+:K_0\rightarrow K_0(\AS)\quad\quad\quad\text{(resp. }F^+:\M\rightarrow \M_\AS\text{)}$$
$$F^-:K_0\rightarrow K_0(\AS)\quad\quad\quad\text{(resp. }F^-:\M\rightarrow \M_\AS\text{)}$$
\end{prop}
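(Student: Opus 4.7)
The plan is to check, one relation at a time, that the set-level maps $F^\pm$ descend to $K_0^n$, then pass to the inductive limit, then localize. Since $\{\pm1\}$ is a closed $\AS$-subset of $\mathbb R^*$ and $\varphi_X$ is an $\AS$-map, its fibre $\varphi_X^{-1}(\pm1)$ is indeed an $\AS$-set, so $F^\pm$ is well defined as a set-level map $\Var^n_\mon \to \AS$.

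First I would verify each of the three defining relations of $K_0^n$ in Definition \ref{defn:K0n}. For relation (1), an equivariant $\AS$-bijection $f:X\to Y$ over $\mathbb R^*$ restricts to an $\AS$-bijection $\varphi_X^{-1}(\pm1)\to\varphi_Y^{-1}(\pm1)$, so the classes in $K_0(\AS)$ agree. For relation (2), if $Y\subset X$ is a closed invariant $\AS$-subset, then $\varphi_Y^{-1}(\pm1)$ is a closed $\AS$-subset of $\varphi_X^{-1}(\pm1)$, so the scissor relation in $K_0(\AS)$ is exactly what we need. Relation \ref{item:relationaffine} is the painless one: the map $\psi=\varphi\circ\pr_Y$ on $Y\times\mathbb R^m$ does not depend on the choice of lifting $\sigma$ or $\sigma'$, so $F^\pm$ literally produces the same $\AS$-set $\varphi^{-1}(\pm1)\times\mathbb R^m$ on both sides.

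Next I would check that the induced map $F^\pm:K_0^n\to K_0(\AS)$ respects the ring structure. The product on $K_0^n$ is the fibre product over $\mathbb R^*$ with the diagonal action, so on the underlying sets
\[
\varphi_{X\times_{\mathbb R^*}Y}^{-1}(\pm1)=\{(x,y)\in X\times Y:\varphi_X(x)=\varphi_Y(y)=\pm1\}=\varphi_X^{-1}(\pm1)\times\varphi_Y^{-1}(\pm1),
\]
which gives $F^\pm([X][Y])=F^\pm([X])F^\pm([Y])$ in $K_0(\AS)$. The unit $\mathbb 1_n=[\id:\mathbb R^*\acts\mathbb R^*\to\mathbb R^*]$ is sent to $\{\pm1\}$, a point, matching the unit of $K_0(\AS)$. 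Finally, $F^\pm([A]\cdot\mathbb 1_n)=F^\pm([A\times\mathbb R^*\to\mathbb R^*])=A\times\{\pm1\}$ yields $[A]$, establishing the $K_0(\AS)$-algebra structure. This is the decisive step, and the only real contrast with the previous proposition on $F^>,F^<$: taking a fibre over a single point commutes with products, whereas taking a fibre over a half-line does not, which is precisely why $F^>,F^<$ failed to be ring morphisms.

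It remains to pass to the limit and to localize. The transition morphisms $\theta_{mn}$ in the system $(\Var_\mon^n)_n$ only alter the $\mathbb R^*$-action and preserve $X$ and $\varphi_X$; since $F^\pm$ ignores the action, the triangle commutes for every $n\prec m$, so the maps glue to a $K_0(\AS)$-algebra morphism $F^\pm:K_0=\varinjlim K_0^n\to K_0(\AS)$. For the localization, I compute $F^\pm(\mathbb L)=F^\pm([\pr_{\mathbb R^*}:\mathbb R\times\mathbb R^*\to\mathbb R^*])=\mathbb R\times\{\pm1\}\cong\mathbb R$, hence $F^\pm(\mathbb L)=\mathbb L_\AS$, which is invertible in $\M_\AS$. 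By the universal property of localization, $F^\pm$ extends uniquely to $\M_\AS$-algebra morphisms $F^\pm:\M^n\to\M_\AS$ and $F^\pm:\M\to\M_\AS$. The only potentially subtle point in all of this is verifying that $\varphi_X^{-1}(\pm1)$ is $\AS$ (which follows from $\varphi_X$ being an $\AS$-map and $\{\pm1\}$ being closed $\AS$ in $\mathbb R^*$); everything else is a direct check on representatives.
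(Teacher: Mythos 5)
Your proof is correct and complete; the paper states this proposition without proof, so there is no "official" argument to compare against, but your direct verification on representatives is exactly the natural one. A few remarks on the checks you carried out.

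The well-definedness of $F^\pm$ relies on $\varphi_X^{-1}(\pm 1)$ being $\AS$, and your justification is the right one: $\varphi_X^{-1}(\pm 1)\times\{\pm 1\}=\Gamma_{\varphi_X}\cap\bigl(X\times\{\pm 1\}\bigr)$ is $\AS$ (intersection in the Boolean algebra $\AS$), and one recovers $\varphi_X^{-1}(\pm1)$ by the projection, which is an $\AS$-bijection onto it. Your three relation checks in $K_0^n$ are all correct, and your observation about relation \ref{item:relationaffine} being vacuous for $F^\pm$ (since $\psi^{-1}(\pm1)=\varphi^{-1}(\pm1)\times\mathbb R^m$ is literally independent of the lifted action) is exactly the point of that relation.

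The heart of the matter — and you identify it as such — is multiplicativity. Because the product in $K_0^n$ is the fibre product over $\mathbb R^*$, the fibre over a singleton splits as a cartesian product:
$$(X\times_{\mathbb R^*}Y)\cap\varphi^{-1}(\pm1)=\varphi_X^{-1}(\pm1)\times\varphi_Y^{-1}(\pm1),$$
while the fibre over a half-line does not. Your diagnosis of why $F^{>}$ and $F^{<}$ fail to be ring morphisms is correct and in fact complements the paper's own justification (which only records that $F^{>}$ does not preserve the unit, via $\chi_c(\mathbb R_{>0})=-1$). Both observations are valid; yours explains the structural obstruction, theirs gives a quick numerical witness.

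The passage to the colimit (the transition maps $\theta_{mn}$ change only the action, which $F^\pm$ discards) and the localization ($F^\pm(\mathbb L)=\mathbb L_\AS$ is invertible in $\M_\AS$, so extend by the universal property) are standard and correctly handled. No gaps.
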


\begin{rem}
Let $A\in K_0$ then $F^{\varepsilon}\left(\frac{A}{\mathbb L^n}\right)=\frac{F^{\varepsilon}(A)}{\mathbb L_\AS^n}$ where $\varepsilon\in\{-,+\}$.
\end{rem}

\begin{prop}
Given $f:(\mathbb R^d,0)\rightarrow(\mathbb R,0)$ a Nash germ, we recover from $Z_f(T)$ the motivic zeta functions considered by G. Fichou in \cite{Fic05}\cite{Fic05-bis} applying the previous morphisms and the virtual Poincaré polynomial at each coefficient:
$$Z_f^\beta(T)=\sum_{n\ge1}\beta\left(\overline{\left[\X_n(f)\right]}\right)u^{-nd}T^n\in\mathbb Z[u,u^{-1}][[T]]$$
and
$$Z_f^{\beta,\varepsilon}(T)=\sum_{n\ge1}\beta\left(F^\varepsilon\left(\left[\X_n(f)\right]\right)\right)u^{-nd}T^n\in\mathbb Z[u,u^{-1}][[T]]$$ where $\varepsilon=+,-$. \\
\end{prop}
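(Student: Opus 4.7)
The approach is a direct unpacking of definitions: I want to check coefficient by coefficient that applying the morphisms $\overline{\,\cdot\,}$ (resp.\ $F^\pm$) followed by the virtual Poincaré polynomial $\beta$ to $\left[\X_n(f)\right]\mathbb L^{-nd}\in\M^n$ returns exactly the coefficient of the corresponding Fichou zeta function.

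First I would recall Fichou's definitions from \cite{Fic05,Fic05-bis}: the coefficient of $T^n$ in his naive zeta function is $\beta(\X_n(f))u^{-nd}$, where $\X_n(f)$ denotes the very same $\AS$-subset of $\mathcal L_n(\mathbb R^d)$ used in our construction, and the coefficient of $T^n$ in his signed zeta functions is $\beta(\X_n^{\pm}(f))u^{-nd}$, with
$$\X_n^\pm(f)=\left\{\gamma\in\mathcal L_n(\mathbb R^d),\ \gamma(0)=0,\ f(\gamma(t))\equiv\pm t^n\mod t^{n+1}\right\}.$$
Thus the whole statement reduces to identifying $\overline{[\X_n(f)]}$ with $[\X_n(f)]$, and $F^\pm([\X_n(f)])$ with $[\X_n^\pm(f)]$, in $K_0(\AS)$, then transporting the normalizing factor $\mathbb L^{-nd}$ through $\beta$.

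For the naive zeta function: by definition the forgetful morphism $\overline{\,\cdot\,}:K_0^n\to K_0(\AS)$ sends $[\varphi_{\X_n(f)}:\mathbb R^*\acts\X_n(f)\rightarrow\mathbb R^*]$ to $[\X_n(f)]$; by the earlier remark it commutes with division by powers of $\mathbb L$, so $\overline{[\X_n(f)]\mathbb L^{-nd}}=[\X_n(f)]\mathbb L_\AS^{-nd}$. Applying the ring morphism $\beta$ with $\beta(\mathbb L_\AS)=u$ yields $\beta(\X_n(f))u^{-nd}$, which is Fichou's coefficient. For the signed case, by definition of the angular component $\varphi(\gamma)=\ac(f\gamma)$, the set $F^\pm([\X_n(f)])=[\varphi^{-1}(\pm1)]$ coincides set-theoretically with $\X_n^\pm(f)$; using the analogous compatibility $F^\pm(A/\mathbb L^n)=F^\pm(A)/\mathbb L_\AS^n$ already recorded, applying $\beta$ produces $\beta(\X_n^\pm(f))u^{-nd}$, matching Fichou.

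I do not anticipate a genuine obstacle: the whole proposition is a bookkeeping statement, whose only delicate point is checking that our action $\lambda\cdot\gamma(t)=\gamma(\lambda t)$ and our angular component $\varphi(\gamma)=\ac(f\gamma)$ are defined precisely so that the fibre $\varphi^{-1}(\pm1)$ is Fichou's $\X_n^\pm(f)$, after which everything follows from additivity and multiplicativity of $\beta$ together with the remarks established earlier in this section.
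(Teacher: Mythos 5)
Your proof is correct and is precisely the bookkeeping argument the paper treats as implicit (the proposition is stated without a written proof): the forgetful morphism and $F^\pm$ respect division by $\mathbb L$, $\beta(\mathbb L_\AS)=u$, and $\varphi^{-1}(\pm1)$ is by construction Fichou's $\X_n^\pm(f)$. Nothing more is needed.
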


\subsection{Rationality of the motivic local zeta function}
\subsubsection{Change of variables key lemma}
The following lemma is a version of Denef--Loeser change of variables key lemma \cite[Lemma 3.4]{DL99-Germs} adapted for our framework. Denef--Loeser key lemma was introduced in order to generalize Kontsevich transformation rule.
\begin{lemma}[Change of variables key lemma {\cite[Lemma 4.5]{jbc1}}]\label{lem:keylemma}
Let $h:M\rightarrow\mathbb R^d$ be a proper generically one-to-one Nash map with $M$ a non-singular Nash variety. For $e\in\mathbb N$, let $$\Delta_e=\left\{\gamma\in\mathcal L_n(M),\,\ord_t\Jac_h(\gamma)=e\right\}$$ Then for $n\ge 2e$, $h_{*n}(\pi_n\Delta_e)$ is an $\AS$-set and $h_{*n}:\pi_n\Delta_e\rightarrow h_{*n}(\pi_n\Delta_e)$ is a piecewise trivial fibration\footnote{We mean that we may break $h_{*n}(\pi_n\Delta_e)$ into disjoint $\AS$ parts $B_i$ such that $h^{-1}_{*n}(B_i)$ is $\AS$ and $\AS$-isomorphic to $B_i\times\mathbb R^e$.} with fiber $\mathbb R^e$.
\end{lemma}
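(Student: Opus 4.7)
My plan is to adapt the Denef--Loeser proof of their key lemma \cite[Lemma 3.4]{DL99-Germs} to the $\AS$ framework. Three things need to be verified: that $h_{*n}(\pi_n\Delta_e)$ is $\AS$, that each fiber of $h_{*n}|_{\pi_n\Delta_e}$ is an affine space of dimension $e$, and that pointwise fiber information can be upgraded to a piecewise trivialization through Proposition \ref{prop:ptfAS}.

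I would first observe that $\pi_n\Delta_e$ is $\AS$ in $\mathcal L_n(M)$, since the condition $\ord_t\Jac_h(\gamma)=e$ is a Nash-semialgebraic condition on jets. Because $h$ is proper and Nash, $h_{*n}$ is a proper Nash map between Nash manifolds, and the image of an $\AS$-set under a proper Nash map is $\AS$; combined with the noetherianity of the $\AS$-topology this yields $h_{*n}(\pi_n\Delta_e)\in\AS$.

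The core is the fiber analysis. Fix $\overline\gamma_0\in\pi_n\Delta_e$, choose a lift $\gamma_0\in\mathcal L(M)$, and work in local Nash coordinates on $M$ around $\gamma_0(0)$. Parametrize competitors with the same image by $\gamma=\gamma_0+u$, with $u\in(t\mathbb R[t]/t^{n+1})^d$. Taylor expansion gives
$$h(\gamma_0+u)=h(\gamma_0)+Dh(\gamma_0)\cdot u+Q(\gamma_0,u),$$
with $Q$ quadratic in $u$, so the condition $h_{*n}(\pi_n\gamma)=h_{*n}(\overline\gamma_0)$ reads $Dh(\gamma_0)\cdot u\equiv-Q(\gamma_0,u)\bmod t^{n+1}$. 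Putting $Dh(\gamma_0)\in M_d(\mathbb R[[t]])$ in Smith normal form (its determinant has $t$-order $e$) produces elementary divisors $t^{e_1},\dots,t^{e_d}$ with $\sum e_i=e$. For $n\geq 2e$, an iterative Hensel/Newton-type lifting absorbs the quadratic term $Q$ at each step, and one identifies the set of admissible $u$ modulo $t^{n+1}$ with the kernel of $Dh(\gamma_0)$ acting on $(\mathbb R[t]/t^{n+1})^d$, an $\mathbb R$-affine space of dimension $\sum e_i=e$.

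To globalize, I would stratify $\pi_n\Delta_e$ into finitely many $\AS$-pieces on which the elementary divisors $(e_1,\dots,e_d)$ are constant, obtaining over each piece a local $\AS$-trivialization with fiber $\mathbb R^e$; Proposition \ref{prop:ptfAS} then delivers piecewise triviality. The main obstacle I expect is ensuring each ingredient — the stratification by elementary divisors and the Hensel-type fiber parametrization — is genuinely $\AS$ rather than merely semialgebraic; this follows because everything is built from Nash data and the $\AS$ class is stable under Nash operations, but the verification requires care, especially in patching local trivializations across strata without breaking the $\AS$ structure.
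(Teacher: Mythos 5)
The paper gives no proof of this lemma: it is cited verbatim as \cite[Lemma 4.5]{jbc1}, so there is no in-paper argument to compare your sketch against. Your reconstruction follows the Denef--Loeser template \cite[Lemma 3.4]{DL99-Germs} (Taylor expansion, Smith normal form of the jet of $Dh$, Newton--Hensel absorption of the quadratic remainder using $n\ge 2e$, stratification by elementary divisors, appeal to Proposition \ref{prop:ptfAS}), and that is indeed the natural route and surely the backbone of the cited proof.

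Two points deserve more care than your sketch gives them, and they constitute the real technical content of the cited lemma. First, $h$ is only assumed proper, Nash, and generically one-to-one, \emph{not} a composition of blowings-up; as the paper stresses after Theorem \ref{thm:rat}, this generality matters. Your parametrization $\gamma=\gamma_0+u$ with $u\in(t\,\mathbb R[t]/t^{n+1})^d$ silently fixes the base point, but a priori the $h_{*n}$-fiber of a jet can contain jets based at different $M$-preimages of $h(\gamma_0(0))$, one cluster per preimage. Separating these clusters requires the arc-lifting statement \cite[Corollary 4.17]{jbc1} (which the paper records later, before Proposition \ref{prop:BNFic1}): arcs on $\mathbb R^d$ not contained in a nowhere dense set lift uniquely through $h$. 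On $\Delta_e$ the Jacobian order is finite, so this applies, but it must be invoked; it is not a formal consequence of the Denef--Loeser computation. Second, the assertion that ``the image of an $\AS$-set under a proper Nash map is $\AS$'' is only immediate for \emph{closed} $\AS$-sets under proper maps; for a general constructible $\AS$-set one must first decompose into locally closed $\AS$-pieces and track the image of each, and this is exactly where the noetherianity of the $\AS$-topology (the proposition preceding Proposition \ref{prop:ptfAS}) is used in a non-cosmetic way. Neither gap is fatal, but as written the sketch treats as routine precisely the two places where the $\AS$/Nash setting diverges from the algebraic one.
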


\subsubsection{Monomialization}\label{subsubsect:monom}
Let $f:(\mathbb R^d,0)\rightarrow(\mathbb R,0)$ be a Nash function. By Proposition \ref{prop:NashResol} there exists $h:Y\rightarrow\mathbb R^d$ a finite sequence of algebraic blowings-up such that $f\circ h$ and the jacobian determinant $\Jac_h$ have simultaneously only normal crossings.

We denote by $X_0(f)=f^{-1}(0)$ the zero locus of $f$. We denote by $(E_i)_{i\in A}$ the irreducible $\AS$ components of $h^{-1}(X_0(f))$ and we set $$N_i=\ord_{E_i}f\circ h$$ and $$\nu_i-1=\ord_{E_i}\Jac h$$

We use the usual stratification of $Y$: for $I\subset A$, we set $E_I=\cap_{i\in I}E_i$ and $\pring{E_I}=E_I\setminus\cup_{j\in A\setminus I}E_j$. Thus $Y=\sqcup_{I\subset A}\pring{E_I}$ and $E_\varnothing=Y\setminus h^{-1}(X_0)$.

For $I\neq\varnothing$, we define $U_I$ using the following classical construction \cite[3.2 Existence Theorem]{Ste99}\footnote{Using the adjunction formula, we may prove that $U_I$ is in fact the fiber product of the $U_{i|\pring{E_I}}$, $i\in I$, where $U_i$ is the complement of the null section of the normal bundle of $E_i$ in $Y$. Then $f_I:U_I\rightarrow\mathbb R^*$ is just the map induced by $f\circ h$.}.

Let $(U_k,x)$ (resp. $(U_l,x')$) be a coordinate system on $Y$ around $\pring{E_I}$ with $U_k$ (resp. $U_l$) an open $\AS$ set such that $(f\circ h)_{|U_k}=u(x)\prod_{i\in I}x_i^{N_i}$ with $u$ a unit and $E_i:x_i=0$ (resp. $(f\circ h)_{|U_l}=u(x')\prod_{i\in I}x_i'^{N_i}$ with $u'$ a unit and $E_i:x_i'=0$).

Assume that $U_k\cap U_l\neq\varnothing$. Then on $U_k\cap U_l$ we have $x'_i=\alpha^{kl}_i(x)x_i$ with $\alpha^{kl}_i$ a unit so that $f\circ\sigma(x')=u'(x')\prod_{i\in I}{x'_i}^{N_i}=\left(u'(x')\prod_{i\in I}\alpha^{kl}_i(x)^{N_i}\right)\prod_{i\in I}x_i^{N_i}$ hence $u(x)=u'(x')\prod_{i\in I}\alpha^{kl}_i(x)^{N_i}$. \\
We index a family of such $U_k$ covering $\pring{E_I}$ by $k\in K$. Then set $$T=\left\{(x,(a_i),k)\in\pring{E_I}\times(\mathbb R^*)^{|I|}\times K,\,x\in U_k\right\}$$ and $U_I=\quotient{T}{\sim}$ where $$(x,(a_i),k)\sim(y,(b_i),l)\Leftrightarrow\left\{\begin{array}{l}x=y\\b_i=\alpha_i^{kl}(x)a_i\end{array}\right.$$
 so that $p_I:U_I\rightarrow\pring{E_I}$ is a locally trivial $\AS$-fibration with fiber $(\mathbb R^*)^{|I|}$.

For $x\in U_k$ we define $f_I^k:(\pring{E_I}\cap U_k)\times(\mathbb R^*)^{|I|}\rightarrow\mathbb R^*$ by $f_I^k(x,(a_i))=u(x)\prod_{i\in I}a_i^{N_i}$. This induces a map $f_I:U_I\rightarrow\mathbb R^*$.

Let $N_I=\gcd_{i\in I}(N_i)$ then there are $\alpha_i\in\mathbb Z$ such that $N_I=\sum_{i\in I}\alpha_iN_i$. We consider the action $\tau$ of $\mathbb R^*$ on $U_I$ locally defined by $\lambda\cdot(x,(a_i)_i)=(x,(\lambda^{\alpha_i}a_i)_i)$.

\begin{defn}\label{defn:UI}
The class $[f_I:U_I\rightarrow\mathbb R^*,\tau]$ is well-defined in $K_0^{N_I}$. We shall simply denote it by $[U_I]$.
\end{defn}

\begin{prop}
$\overline{[U_I]}=(\mathbb L_{\AS}-1)^{|I|}[\pring{E_I}]\in K_0(\AS)$
\end{prop}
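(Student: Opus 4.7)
The plan is to use that the underlying $\AS$-set $U_I$ is the total space of a locally trivial $\AS$-fibration over $\pring{E_I}$ with fiber $(\mathbb R^*)^{|I|}$, and then apply the piecewise triviality provided by Proposition~\ref{prop:ptfAS} together with the additivity and multiplicativity of the class in $K_0(\AS)$.

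First, I would unwind the definitions. By construction, the forgetful morphism $\overline{\vphantom{1em}\ \cdot\ }:K_0^{N_I}\to K_0(\AS)$ sends $[f_I:U_I\to\mathbb R^*,\tau]$ to $[U_I]$, so we may forget the $\mathbb R^*$-action and the map $f_I$ entirely. What remains to prove is the purely geometric identity $[U_I]=(\mathbb L_{\AS}-1)^{|I|}[\pring{E_I}]$ in $K_0(\AS)$.

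Next, I would observe that the construction in Section~\ref{subsubsect:monom} exhibits $p_I:U_I\to\pring{E_I}$ explicitly as a locally trivial $\AS$-fibration with fiber $(\mathbb R^*)^{|I|}$: over each $U_k$, the preimage $p_I^{-1}(\pring{E_I}\cap U_k)$ is identified with $(\pring{E_I}\cap U_k)\times(\mathbb R^*)^{|I|}$, and the gluing cocycle $(a_i)\mapsto(\alpha_i^{kl}(x)a_i)$ has $\AS$ graph because each $\alpha_i^{kl}$ is Nash. Applying Proposition~\ref{prop:ptfAS}, we decompose $\pring{E_I}=\bigsqcup_{\alpha=1}^k X_\alpha$ into finitely many $\AS$-sets such that $p_I^{-1}(X_\alpha)$ is $\AS$-isomorphic to $X_\alpha\times(\mathbb R^*)^{|I|}$.

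Then by the additivity and multiplicativity relations in $K_0(\AS)$ we compute
\[
[U_I]=\sum_{\alpha=1}^k[p_I^{-1}(X_\alpha)]=\sum_{\alpha=1}^k[X_\alpha]\,[(\mathbb R^*)^{|I|}]=[\pring{E_I}]\cdot[\mathbb R^*]^{|I|}.
\]
Since $\mathbb R^*=\mathbb R\setminus\{0\}$ is an open $\AS$-subset of $\mathbb R$, the scissor relation yields $[\mathbb R^*]=[\mathbb R]-[\{0\}]=\mathbb L_{\AS}-1$, giving $[U_I]=(\mathbb L_{\AS}-1)^{|I|}[\pring{E_I}]$ as claimed. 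There is no real obstacle here; the only subtle point is to justify that all pieces remain in the class $\AS$, which is guaranteed by the Nashness of the transition functions $\alpha_i^{kl}$ and by Proposition~\ref{prop:ptfAS}.
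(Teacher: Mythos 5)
Your proof is correct and follows essentially the same route as the paper's: apply Proposition~\ref{prop:ptfAS} to pass from the locally trivial $\AS$-fibration $p_I:U_I\to\pring{E_I}$ to a piecewise trivial one, then use additivity and multiplicativity in $K_0(\AS)$ together with $[\mathbb R^*]=\mathbb L_{\AS}-1$. The extra remarks about forgetting the $\mathbb R^*$-action and about the $\AS$-ness of the transition cocycle are accurate but merely unwind what the paper leaves implicit.
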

\begin{proof}
By Proposition \ref{prop:ptfAS}, we have $\pring{E_I}=\bigsqcup_{\alpha=1}^k X_\alpha$ with $p_I^{-1}(X_\alpha)\simeq X_\alpha\times(\mathbb R^*)^{|I|}$. Thus, in $K_0(\AS)$, we have
$$\overline{[U_I]}=\sum_{i=1}^k[p_I^{-1}(X_\alpha)]=\sum_{i=1}^k[X_\alpha](\mathbb L_{\AS}-1)^{|I|}=[\pring{E_I}](\mathbb L_{\AS}-1)^{|I|}\in K_0(\AS)$$
\end{proof}

\subsubsection{A rational expression of the motivic local zeta function}
The following theorem is similar to the rationality results for the zeta functions of \cite{DL02,Loo02,GLM,KP03,Fic05} in their respective frameworks.
\begin{thm}[Rationality formula]\label{thm:rat}
Let $f:(\mathbb R^d,0)\rightarrow(\mathbb R,0)$ be a Nash germ. Let $h:(Y,h^{-1}(0))\rightarrow(\mathbb R^d,0)$ be as in Section \ref{subsubsect:monom}. Then $$Z_f(T)=\sum_{\varnothing\neq I\subset A}\left[U_I\cap(h\circ p_I)^{-1}(0)\right]\prod_{i\in I}\frac{\mathbb L^{-\nu_i}T^{N_i}}{\mathbb 1-\mathbb L^{-\nu_i}T^{N_i}}$$
\end{thm}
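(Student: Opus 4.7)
The plan is to follow the Denef--Loeser strategy: lift the computation of $Z_f(T)$ to $Y$ via $h$, monomialize there, and push back down using the key lemma. Since $h$ is proper and generically one-to-one, every arc $\gamma\in\mathcal L(\mathbb R^d)$ with $\gamma(0)=0$ and $f\circ\gamma\not\equiv0$ has a unique lift $\tilde\gamma\in\mathcal L(Y)$ with $\tilde\gamma(0)\in h^{-1}(0)\subset\bigcup_{i\in A}E_i$. I partition $\X_n(f)$ according to the pair $(I,(n_i)_{i\in I})$ where $I=\{i\in A:\tilde\gamma(0)\in E_i\}$ is non-empty and $n_i=\ord_{E_i}(\tilde\gamma)>0$. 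On a local chart where $f\circ h=u\prod_{i\in I}x_i^{N_i}$ with $u$ a unit and $E_i:x_i=0$, the condition $f(\gamma(t))\equiv ct^n\bmod t^{n+1}$ with $c\neq 0$ translates into $n=\sum_{i\in I}n_iN_i$ and $c=u(\tilde\gamma(0))\prod_{i\in I}a_i^{N_i}$, where $a_i=\ac(x_i\circ\tilde\gamma)\in\mathbb R^*$.

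Next I compute the class of the stratum on $Y$. Near a base point $p\in\pring{E_I}\cap h^{-1}(0)$, an $n$-jet $\tilde\gamma$ in the stratum is specified by $p$, the leading coefficients $(a_i)\in(\mathbb R^*)^{|I|}$, and $nd-\sum_{i\in I}n_i$ additional real parameters (higher-order coefficients of the $x_i$ together with all coefficients of the transverse Nash coordinates). The chart-change rule $a_i\mapsto\alpha_i^{kl}(p)a_i$ is exactly the cocycle used to build $U_I$ in Section \ref{subsubsect:monom}, so the bundle of base points together with leading coefficients is globally $\AS$-isomorphic to $U_I\cap(h\circ p_I)^{-1}(0)$. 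The reparametrization action $\lambda\cdot\tilde\gamma(t)=\tilde\gamma(\lambda t)$ sends $(a_i)$ to $(\lambda^{n_i}a_i)$, hence scales $c$ by $\lambda^n$, which is the equivariant structure required for a class in $K_0^n$. Invoking the relation \ref{item:relationaffine} of Definition \ref{defn:K0n} (applied after embedding $(\mathbb R^*)^{|I|}$ as the complement of the coordinate hyperplanes in $\mathbb R^{|I|}$ and doing inclusion-exclusion) to match this weight scheme to the one used in Definition \ref{defn:UI} and to absorb the trivial $\mathbb L$-pieces carrying the higher-order coefficients, I conclude that the class of the stratum on $Y$ equals $[U_I\cap(h\circ p_I)^{-1}(0)]\cdot\mathbb L^{nd-\sum_{i\in I}n_i}$ in $K_0^n$.

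Then I push down via the change of variables key lemma. On the stratum, $\ord_t\Jac_h(\tilde\gamma)=\sum_{i\in I}n_i(\nu_i-1)=:e$. Choosing a truncation level $m\ge 2e$, Lemma \ref{lem:keylemma} gives that $h_{*m}$ is a piecewise trivial $\AS$-fibration with fibre $\mathbb R^e$ from the $Y$-stratum onto the $\mathbb R^d$-stratum; since both projections $\pi^m_n$ are trivial $\mathbb L^{d(m-n)}$-bundles over the relevant strata (upstairs and downstairs), dividing by $\mathbb L^e$ and simplifying $nd-\sum n_i-e=nd-\sum n_i\nu_i$ yields
$$[\X_{n,I,(n_i)}]=[U_I\cap(h\circ p_I)^{-1}(0)]\cdot\mathbb L^{nd-\sum_{i\in I}n_i\nu_i}\in K_0^n.$$
Plugging into $Z_f(T)=\sum_n\mathbb L^{-nd}[\X_n(f)]T^n$, grouping by $I$, and using $n=\sum_{i\in I}n_iN_i$, the triple sum factors as
$$\sum_{\varnothing\ne I\subset A}[U_I\cap(h\circ p_I)^{-1}(0)]\prod_{i\in I}\sum_{n_i\ge 1}\mathbb L^{-n_i\nu_i}T^{n_iN_i}=\sum_{\varnothing\ne I\subset A}[U_I\cap(h\circ p_I)^{-1}(0)]\prod_{i\in I}\frac{\mathbb L^{-\nu_i}T^{N_i}}{\mathbb 1-\mathbb L^{-\nu_i}T^{N_i}},$$
which is the announced rationality formula.

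The principal difficulty is that these identifications must take place in the refined ring $K_0^n$ rather than in $K_0(\AS)$. Every step---the local parametrization of jets, the $(\alpha_i^{kl})$-gluing into $U_I$, the factorization of the free $\mathbb L$-pieces carrying higher-order coefficients, and the key-lemma pushforward---must carry the $\mathbb R^*$-equivariance coherently, and the relation \ref{item:relationaffine} is repeatedly needed to shift weight schemes whenever the $\mathbb R^*$-action $(a_i)\mapsto(\lambda^{n_i}a_i)$ coming from the arc geometry does not literally match the canonical choice of weights used in Definition \ref{defn:UI}.
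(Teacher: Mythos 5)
Your overall strategy coincides with the paper's: lift arcs through $h$, stratify by $(I,(k_i))$ so that $f\circ h$ is monomial there, use the change of variables key lemma to descend, and sum the resulting geometric series. The dimension count $nd-\sum_{i\in I}k_i$ for the free higher-order jet coefficients is correct, and the final manipulation $nd-\sum k_i - e = nd - \sum k_i\nu_i$ gives exactly the announced product.

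The genuine gap is in the step where you pass from the $\mathbb R^*$-action induced by arc reparametrization, namely $(a_i)\mapsto(\lambda^{k_i}a_i)$, to the fixed action $\tau$ of Definition~\ref{defn:UI}, namely $(a_i)\mapsto(\lambda^{\alpha_i}a_i)$. You propose to do this ``by embedding $(\mathbb R^*)^{|I|}$ as the complement of the coordinate hyperplanes in $\mathbb R^{|I|}$ and doing inclusion-exclusion,'' invoking relation~\ref{item:relationaffine}. This cannot work as stated, for two independent reasons. First, $f_I(x,(a_i))=u(x)\prod a_i^{N_i}$ vanishes on each coordinate hyperplane $\{a_i=0\}$, so the boundary pieces of the decomposition are not objects of $\Var_\mon$ and there is no inclusion-exclusion to perform in $K_0^n$. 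Second, relation~\ref{item:relationaffine} is only available when the morphism to $\mathbb R^*$ factors through the base, i.e.\ $\psi=\varphi\circ\pr_Y$ on $Y\times\mathbb R^m$, whereas $f_I$ depends on the $(\mathbb R^*)^{|I|}$-coordinates in an essential way; the hypothesis that the two actions agree after projecting to $Y$ does not even make sense in your setup.

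The paper resolves this with a coordinate change $\psi$ that isolates the single combination on which $f_I$ depends. Concretely, it introduces $W_I$ (locally $\pring{E_I}\times\mathbb R^*$ with morphism $g_I(x,r)=u(x)r^{N_I}$ and the $\lambda\mapsto\lambda^{n/N_I}$ action on $r$) and the $\AS$-isomorphism
$$\psi:U_I\;\xrightarrow{\ \simeq\ }\;W_I\times\Bigl\{a\in(\mathbb R^*)^I:\ \textstyle\prod_i a_i^{N_i/N_I}=1\Bigr\},\qquad \psi(x,a)=\bigl(x,\ r=\textstyle\prod a_i^{N_i/N_I},\ (r^{-\alpha_i}a_i)_i\bigr),$$
under which $f_I=g_I\circ\pr_{W_I}\circ\psi$ and both the arc-induced action $(\lambda^{k_i})$ and the rescaled action $(\lambda^{(n/N_I)\alpha_i})$ project to the \emph{same} action on $W_I$. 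This is precisely the configuration in which \ref{item:relationaffine} applies, so $[f_I,\sigma]=[f_I,\sigma']$ in $K_0^n$, and then the inductive system identifications give $[f_I,\sigma']=[f_I,\tau]=[U_I]$ in $K_0$. Without this coordinate change (or some equivalent device producing a factorization of $f_I$ through the base factor), your argument does not justify that the arc-derived class agrees with $[U_I\cap(h\circ p_I)^{-1}(0)]$, which is the only nontrivial point of the whole proof.
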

\begin{proof}
The sum $$[\X_n(f)]=\sum_{e\ge1}[\X_n(f)\cap h_{*n}\Delta_e]$$ is finite since for $\gamma=h_{*n}\varphi\in\X_n(f)\cap\Delta_e$ we have $\ord_t\Jac_h(\varphi)=\sum(\nu_i-1)k_i$ and $\ord_tf\gamma=\sum N_ik_i$ where $k_i=\ord_{E_i}\varphi$. \\
By the change of variables key lemma \ref{lem:keylemma} we have $[\X_n(f)\cap h_{*n}\Delta_e]=[h_{*n}^{-1}\X_n(f)\cap\Delta_e]\mathbb L^{-e}$. \\
Next $$[h_{*n}^{-1}\X_n(f)\cap\Delta_e]\mathbb L^{-e}=\sum_{\varnothing\neq I\subset A}[h_{*n}^{-1}\X_n(f)\cap\Delta_e\cap\pring{E_I}]\mathbb L^{-e}$$ and
$$h_{*n}^{-1}\X_n(f)\cap\Delta_e\cap\pring{E_I}=\left\{\gamma\in\mathcal L_n(Y),\,\gamma(0)\in\pring{E_I}\cap h^{-1}(0),\,\sum_{i\in I}k_i(\nu_i-1)=e,\,\sum_{i\in I}k_i N_i=n,\,k_i=\ord_{E_i}\gamma\right\}$$
Let $\gamma\in\mathcal L_n(Y)$ with $\gamma(0)\in\pring{E_I}$ then $\ac_{f\circ h}(\gamma)=f_I\left(\gamma(0),(\ac\gamma_i)_{i\in I}\right)$ and $\ord_t(f\circ h)(\gamma(t))=\sum_{i\in I}k_iN_i$ where $k_i=\ord_{E_i}\gamma$. The action $\lambda\cdot\gamma(t)=\gamma(\lambda t)$ of $\mathbb R^*$ on $\mathcal L_n(Y)$ induces an action $\sigma$ of $\mathbb R^*$ on $U_I$ locally defined by $\lambda\cdot(x,(a_i)_i)=(x,(\lambda^{k_i}a_i)_i)$. We have $f_I(\lambda\cdot(x,(a_i)_i))=\lambda^nf_I(x,(a_i)_i)$ where $n=\sum_{i\in I}k_iN_i$. So the class $[f_I:U_I\rightarrow\mathbb R^*,\sigma]$ is well-defined in $K_0^n$. Let $\beta=\frac{n}{N_I}$, and consider the action $\sigma'$ of $\mathbb R^*$ on $U_I$ locally defined by $\lambda\cdot(x,(a_i)_i)=(x,(\lambda^{\beta\alpha_i}a_i)_i)$, then $[f_I,\sigma']$ is well-defined in $K_0^n$ and $[f_I,\sigma']=[f_I,\tau]$ in $K_0$. \\
We locally define $W_I$ and $g_I:W_I\rightarrow\mathbb R^*$ by $W_{I|U}=\{(x,r)\in(\pring{E_I}\cap U)\times\mathbb R^*,\,u(x)r^{N_I}\neq0\}$ and $g_{I|U}(x,r)=u(x)r^{N_I}$. We locally define an action of $\mathbb R^*$ on $W_I$ by $\lambda\cdot(x,r)=(x,\lambda^{\frac{n}{N_I}}r)$ then $[g_I:W_I\rightarrow\mathbb R^*]$ is well-defined in $K_0^n$. Next $U_I\xrightarrow[\simeq]{\psi}W_I\times\left\{a\in(\mathbb R^*)^{I},\,\prod a_i^{\frac{N_i}{N_I}}=1\right\}$ where $\psi$ and $\psi^{-1}$ are locally defined by
$$\psi(x,a)=\left(x,r=\prod a_i^{\frac{N_i}{N_I}},(r^{-\alpha_i}a_i)_i\right)$$
and
$$\psi^{-1}(x,r,a)=\left(x,(r^{\alpha_i}a_i)_i\right)$$
Thus, by \ref{item:relationaffine}, we have $[f_I,\sigma]=[f_I,\sigma']\in K_0^n$ since $\pr_{W_I}\psi(\lambda\cdot_\sigma y)=\lambda\cdot\pr_{W_I}\psi(y)=\pr_{W_I}\psi(\lambda\cdot_{\sigma'} y)$. So $[f_I,\sigma]=[f_I,\tau]$ in $K_0$ (the last one being the class $[U_I]$ of Definition \ref{defn:UI}). Then, we have

\begin{align*}
Z_f(T) &= \sum_{n\ge1}\sum_{e\ge1}\sum_{\varnothing\neq I\subset A}\sum_{\substack{k\in\mathbb N^{|I|} \\ \sum k_iN_i=n \\ \sum k_i(\nu_i-1)=e}}\left[U_I\cap(h\circ p_I)^{-1}(0)\right]\mathbb L^{-\sum k_i-e}T^n \\
       &= \sum_{\varnothing\neq I\subset A}\left[U_I\cap(h\circ p_I)^{-1}(0)\right]\sum_{k\in(\mathbb N\setminus\{0\})^{|I|}}\mathbb L^{-\sum k_i\nu_i}T^{\sum k_iN_i} \\
       &= \sum_{\varnothing\neq I\subset A}\left[U_I\cap(h\circ p_I)^{-1}(0)\right]\prod_{i\in I}\frac{\mathbb L^{-\nu_i}T^{N_i}}{\mathbb 1-\mathbb L^{-\nu_i}T^{N_i}}
\end{align*}
\end{proof}

\begin{defn}
Denote by $\M[[T]]_{\mathrm{sr}}$ the $\M$-submodule of $\M[[T]]$ spanned by $\mathbb 1$ and finite products of terms of the forms $\frac{\mathbb L^{\nu}T^N}{\mathbb1-\mathbb L^{\nu}T^N}$ and $\frac{\mathbb 1}{\mathbb 1-\mathbb L^{\nu}T^N}$ where $N\in\mathbb N_{>0}$ and $\nu\in\mathbb Z$.
\end{defn}

\begin{rem}
There exists a unique morphism of $\M$-modules $$\lim_{T\infty}:\M[[T]]_{\mathrm{sr}}\rightarrow\M$$ such that, for $(\nu_i,N_i)_{i\in I}\in(\mathbb Z\times\mathbb N_{>0})^I$ and $(\nu_j,N_j)_{j\in J}\in(\mathbb Z\times\mathbb N_{>0})^J$ where $I$ and $J$ are two finite sets, we have $$\lim_{T\infty}\left(\prod_{i\in I}\frac{\mathbb L^{\nu_i}T^{N_i}}{\mathbb1-\mathbb L^{\nu_i}T^{N_i}}\prod_{j\in J}\frac{\mathbb 1}{\mathbb 1-\mathbb L^{\nu_j}T^{N_j}}\right)=\left\{\begin{array}{ll}(-1)^{|I|}&\text{ if $J=\varnothing$}\\0&\text{ otherwise}\end{array}\right.$$
\end{rem}

\begin{defn}
Let $f:(\mathbb R^d,0)\rightarrow(\mathbb R,0)$ be a Nash germ. By \ref{thm:rat}, $$\Sf_f=-\lim_{T\infty}Z_f(T)\in\M$$ is well-defined. It is called the motivic Milnor fiber of $f$.
\end{defn}

\begin{rem}\label{rk:rat}
Given $h$ as in Section \ref{subsubsect:monom}, we have the following explicit formula $$\Sf_f=\sum_{\varnothing\neq I\subset A}(-1)^{|I|+1}\left[U_I\cap(h\circ p_I)^{-1}(0)\right]$$
\end{rem}

\begin{eg}\label{eg:x3y3}
Let $f(x,y)=x^3-y^3$, let $h$ be the blowing-up along the origin.

In the $y$-chart, $h$ is given by $h(X,Y)=(XY,Y)$ then $fh(X,Y)=Y^3(X^3-1)$ where $E_1:Y=0$ is the exceptional divisor and $E_2:X^3-1=0$ is the strict transform. We also have $\Jac_h(X,Y)=\left|\begin{array}{cc}Y&X\\0&1\end{array}\right|=Y$. And after the change of variables $\tilde Y=Y$ and $\tilde X=X-1$ we get $fh(\tilde X,\tilde Y)=\tilde Y^3\tilde X(\tilde X^2+3\tilde X+3)$ where $E_1:\tilde Y=0$ and $E_2:\tilde X=0$. \\

In the $x$-chart, $h$ is given by $h(X',Y')=(X',X'Y')$ then $fh(X',Y')=X'^3(1-Y'^3)$ where $E_1:X'=0$ and $E_2:1-Y'^3=0$. We also have $\Jac_h(X',Y')=\left|\begin{array}{cc}1&0\\Y'&X'\end{array}\right|=X'$. And after the change of variables $\tilde X'=X'$ and $\tilde Y'=Y'-1$ we get $fh(\tilde X',\tilde Y')=\tilde X'^3\tilde Y'(-\tilde Y'^2-3\tilde Y'-3)$ where $E_1:\tilde X'=0$ and $E_2:\tilde Y'=0$. \\

Thus $N_1=3$ and $N_2=1$, i.e. $f\circ h^{-1}(0)=3E_1+1E_2$. Also $\nu_1=2$ and $\nu_2=1$. \\

Notice that $(X',Y')=\left(XY,\frac1X\right)$ and thus that $(\tilde X',\tilde Y')=\left(\tilde Y(\tilde X+1),\tilde X\frac{-1}{\tilde X+1}\right)$ and $(\tilde X,\tilde Y)=\left(\tilde Y'\frac{-1}{\tilde Y'+1},\tilde X'(\tilde Y'+1)\right)$. Now we can construct $f_{\{1\}}:U_{\{1\}}\rightarrow\mathbb R^*$, $f_{\{2\}}:U_{\{2\}}\rightarrow\mathbb R^*$ and $f_{\{1,2\}}:U_{\{1,2\}}\rightarrow\mathbb R^*$ following the construction before Definition \ref{defn:UI}.

\begin{center}\begin{tikzpicture}
\draw ([shift=(55:1cm)]0,0) arc (55:395:1cm);
\node[anchor=west] at (1.8,0) {$\pring{E_{\{1\}}}\cap h^{-1}(0)=\mathbb P_\mathbb R^1\setminus\{\mathrm{pt}\}=\mathbb R$};
\path[<-] (1.2,0) edge (1.8,0);
\draw (0.1,0.1) -- (0.60,0.60);
\draw (0.80,0.80) -- (1.3,1.3);
\node[anchor=west] at (1.8,1.1) {$\pring{E_{\{2\}}}\cap h^{-1}(0)=\varnothing$};
\path[<-] (1.2,1.1) edge (1.8,1.1);
\fill (0.71,0.71) circle (.06);
\node[anchor=west] at (1.8,.5) {$\pring{E_{\{1,2\}}}\cap h^{-1}(0)=\{\mathrm{pt}\}$};
\path[<-] (0.8,0.71) edge[out=0,in=180] (1.8,.5);
\end{tikzpicture}
\end{center}

Thus
\begin{itemize}[nosep]
\item $U_{\{1\}}\cap(h\circ p_I)^{-1}(0)=(\mathbb P^1\setminus[1:1])\times\mathbb R^*$ and $f_{\{1\}}([r:s],a)=a^3(r^3-s^3)$.
\item $U_{\{2\}}\cap(h\circ p_I)^{-1}(0)=\varnothing$.
\item $U_{\{1,2\}}\cap(h\circ p_I)^{-1}(0)=(\mathbb R^*)^2$ and $f_{\{1,2\}}(a,b)=a^3b$.
\end{itemize} \ \\

Finally
\begin{align*}
Z_f(T)&=\left[f_{\{1\}}:(\mathbb P^1\setminus[1:1])\times\mathbb R^*\rightarrow\mathbb R^*\right]\frac{\mathbb L^{-2}T^3}{\mathbb1-\mathbb L^{-2}T^3}+\left[f_{\{1,2\}}:(\mathbb R^*)^2\rightarrow\mathbb R^*\right]\frac{\mathbb L^{-2}T^3}{\mathbb1-\mathbb L^{-2}T^3}\frac{\mathbb L^{-1}T}{\mathbb1-\mathbb L^{-1}T} \\
&= \mathbb L\frac{\mathbb L^{-2}T^3}{\mathbb1-\mathbb L^{-2}T^3}+(\mathbb L-\mathbb1)\frac{\mathbb L^{-2}T^3}{\mathbb1-\mathbb L^{-2}T^3}\frac{\mathbb L^{-1}T}{\mathbb1-\mathbb L^{-1}T}
\end{align*}
\qed
\end{eg}

We recover the rationality of Koike--Parusiński or Fichou zeta functions.
\begin{prop}[{\cite[Proposition 3.2\& Proposition 3.5]{Fic05}}] For $\varepsilon\in\{+,-\}$, we have
$$Z_f^\beta(T)=\sum_{\varnothing\neq I\subset A}(u-1)^{|I|}\beta\left(\pring{E_I}\cap h^{-1}(0)\right)\prod_{i\in I}\frac{u^{-\nu_i}T^{N_i}}{1-u^{-\nu_i}T^{N_i}}\in\mathbb Z[u,u^{-1}][[T]]$$
$$Z_f^{\beta,\varepsilon}(T)=\sum_{\varnothing\neq I\subset A}\beta\left(U_I\cap(h\circ p_I)^{-1}(0)\cap f_I^{-1}(\varepsilon1)\right)\prod_{i\in I}\frac{u^{-\nu_i}T^{N_i}}{1-u^{-\nu_i}T^{N_i}}\in\mathbb Z[u,u^{-1}][[T]]$$
\end{prop}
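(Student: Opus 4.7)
The strategy is to start from the rationality formula of Theorem \ref{thm:rat} in $\M[[T]]$ and push it forward, first to $\M_\AS[[T]]$ (resp. $K_0(\AS)[[T]]$) via the forgetful morphism (resp. $F^\pm$), then to $\mathbb Z[u,u^{-1}][[T]]$ via the virtual Poincar\'e polynomial. The two formulas must be handled differently because the forgetful morphism is only a $K_0(\AS)$-module morphism whereas $F^+$ and $F^-$ are morphisms of $K_0(\AS)$-algebras.

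For the equivariant formula $Z_f^{\beta,\varepsilon}$ the proof is the simpler of the two. Since $F^\varepsilon:K_0\to K_0(\AS)$ is a ring morphism sending $\mathbb L$ to $\mathbb L_\AS$ (because $F^\varepsilon(\mathbb L)=\pr_{\mathbb R^*}^{-1}(\{\varepsilon 1\})=\mathbb R$), it preserves the rational form coefficientwise, and directly on the class we have $F^{\varepsilon}\left[U_I\cap(h\circ p_I)^{-1}(0)\right]=\left[U_I\cap(h\circ p_I)^{-1}(0)\cap f_I^{-1}(\varepsilon 1)\right]$ by definition of $F^\varepsilon$. Applying $\beta:K_0(\AS)\to\mathbb Z[u]$ (which is a ring morphism extending to $\M_\AS\to\mathbb Z[u,u^{-1}]$ by $\mathbb L_\AS\mapsto u$) to the image of the formula of Theorem \ref{thm:rat} then produces exactly the stated formula, after verifying that the $u^{-nd}$ normalisation in the definition of $Z_f^{\beta,\varepsilon}$ matches the $\mathbb L^{-nd}$ absorbed into the rational form (this is already checked implicitly in the proof of Theorem \ref{thm:rat}).

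For the naive formula $Z_f^\beta$ the obstacle is that the forgetful morphism is not compatible with the product, so one cannot simply apply it to the rational expression; the remark on page giving $\sum_n \mathbb 1 T^n$ versus $\sum_n\overline{\mathbb 1}T^n$ is exactly the phenomenon to avoid. The fix is to first expand geometric series: the coefficient of $T^n$ in $Z_f(T)$ equals
\[
\sum_{\varnothing\neq I\subset A}\ \sum_{\substack{k\in(\mathbb N_{>0})^{|I|}\\ \sum k_iN_i=n}}\left[U_I\cap(h\circ p_I)^{-1}(0)\right]\mathbb L^{-\sum k_i\nu_i}\in\M.
\]
Now apply $\overline{\,\cdot\,}$ coefficient by coefficient, using that it is a $K_0(\AS)$-module morphism commuting with $\mathbb L^{-1}$ (i.e.\ $\overline{A/\mathbb L^m}=\overline A/\mathbb L_\AS^m$) and the identity $\overline{[U_I\cap(h\circ p_I)^{-1}(0)]}=(\mathbb L_\AS-1)^{|I|}[\pring{E_I}\cap h^{-1}(0)]$ proved earlier. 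This yields the $T^n$ coefficient of $\overline{Z_f(T)}$ in $\M_\AS$; applying the ring morphism $\beta$ and re-summing the geometric series back into closed form produces exactly the stated expression.

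The only subtle step is the re-summation in the last paragraph: once $\beta$ is applied, the target $\mathbb Z[u,u^{-1}]$ is a commutative ring, so the factored form $\prod_{i\in I}\frac{u^{-\nu_i}T^{N_i}}{1-u^{-\nu_i}T^{N_i}}$ is legitimate, and the rearrangement of the double sum $\sum_n\sum_{k:\sum k_iN_i=n}=\sum_{k\in(\mathbb N_{>0})^{|I|}}$ is just Fubini for formal power series. I expect no essential obstacle beyond the careful bookkeeping of when one may work with rational expressions directly versus when one must first pass to series — precisely the distinction highlighted in the remarks preceding the proposition.
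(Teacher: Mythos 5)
Your proposal is correct and follows essentially the same route as the paper: the paper's one-line proof says precisely that one applies the forgetful morphism (resp.\ $F^\pm$) coefficientwise to the intermediate \emph{expanded-series} form from the proof of Theorem~\ref{thm:rat}, namely
$Z_f(T)=\sum_{\varnothing\neq I\subset A}\left[U_I\cap(h\circ p_I)^{-1}(0)\right]\sum_{k\in(\mathbb N\setminus\{0\})^{|I|}}\mathbb L^{-\sum k_i\nu_i}T^{\sum k_iN_i}$,
then resums. Your extra observation that $F^\varepsilon$, being a $K_0(\AS)$-algebra morphism with $F^\varepsilon(\mathbb 1)=1$ and $F^\varepsilon(\mathbb L)=\mathbb L_\AS$, preserves the closed rational form directly is a valid and slightly cleaner way to present the equivariant case, and your use of $\overline{[U_I\cap(h\circ p_I)^{-1}(0)]}=(\mathbb L_\AS-1)^{|I|}[\pring{E_I}\cap h^{-1}(0)]$ to recover the $(u-1)^{|I|}\beta(\pring{E_I}\cap h^{-1}(0))$ factor is exactly the step the paper leaves implicit (it follows from $U_I\cap(h\circ p_I)^{-1}(0)=p_I^{-1}(\pring{E_I}\cap h^{-1}(0))$ and the locally trivial fibration structure). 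There is no gap.
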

\begin{proof}
We apply the forgetful morphism (resp. $F^\pm$) to the coefficients of $$Z_f(T)=\sum_{\varnothing\neq I\subset A}\left[U_I\cap(h\circ p_I)^{-1}(0)\right]\sum_{k\in(\mathbb N\setminus\{0\})^{|I|}}\mathbb L^{-\sum k_i\nu_i}T^{\sum k_iN_i}$$
\end{proof}

\begin{eg}
Let $f(x,y)=x^3-y^3$. We deduce from Example \ref{eg:x3y3} that
$$Z_f^\beta(T)=u(u-1)\frac{u^{-2}T^3}{1-u^{-2}T^3}+(u-1)^2\frac{u^{-2}T^3}{1-u^{-2}T^3}\frac{u^{-1}T}{1-u^{-1}T}$$
$$Z_f^{\beta,+}(T)=Z_f^{\beta,-}(T)=u\frac{u^{-2}T^3}{1-u^{-2}T^3}+(u-1)\frac{u^{-2}T^3}{1-u^{-2}T^3}\frac{u^{-1}T}{1-u^{-1}T}$$
\end{eg}

\begin{prop}[{\cite[(1.1)\&(1.2)]{KP03}}]
For $\varepsilon\in\{<,>\}$, we have
$$Z_f^{\chi_c}(T)=\sum_{\varnothing\neq I\subset A}(-2)^{|I|}\chi_c\left(\pring{E_I}\cap h^{-1}(0)\right)\prod_{i\in I}\frac{(-1)^{\nu_i}T^{N_i}}{1-(-1)^{\nu_i}T^{N_i}}\in\mathbb Z[[T]]$$
$$Z_f^{\chi_c,\varepsilon}(T)=\sum_{\varnothing\neq I\subset A}\chi_c\left(U_I\cap(h\circ p_I)^{-1}(0)\cap f_I^{-1}(\mathbb R_{\varepsilon0})\right)\prod_{i\in I}\frac{(-1)^{\nu_i}T^{N_i}}{1-(-1)^{\nu_i}T^{N_i}}\in\mathbb Z[[T]]$$
\end{prop}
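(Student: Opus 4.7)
The plan is to mirror the proof of the preceding Fichou-type proposition, replacing the virtual Poincaré polynomial by the Euler characteristic with compact support and the morphisms $F^\pm$ by their semialgebraic analogues $F^<, F^>$. Concretely, I would start from the geometric-series form of the rationality formula obtained inside the proof of Theorem \ref{thm:rat}, namely
$$Z_f(T)=\sum_{\varnothing\neq I\subset A}\left[U_I\cap(h\circ p_I)^{-1}(0)\right]\sum_{k\in(\mathbb N\setminus\{0\})^{|I|}}\mathbb L^{-\sum k_i\nu_i}T^{\sum k_iN_i},$$
which is the shape to which the realisation morphisms can be applied coefficientwise (the preceding remark warns that one cannot directly apply them to the rational-fraction form).

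For the naive zeta function I would first push through the forgetful morphism $\overline{\ \cdot\ }:\M\to\M_\AS$. By the proposition preceding Theorem \ref{thm:rat}, $\overline{[U_I\cap(h\circ p_I)^{-1}(0)]}=(\mathbb L_\AS-1)^{|I|}[\pring{E_I}\cap h^{-1}(0)]$, and $\overline{\mathbb L^{-\sum k_i\nu_i}}=\mathbb L_\AS^{-\sum k_i\nu_i}$. Then I apply $\chi_c$ coefficientwise; the key numerical inputs are $\chi_c(\mathbb L_\AS)=\chi_c(\mathbb R)=-1$ (so $\chi_c(\mathbb L_\AS^{-1})=-1$ in $\M_\AS$) and consequently $\chi_c((\mathbb L_\AS-1)^{|I|})=(-2)^{|I|}$. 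The factor $(-1)^{nd}$ in the definition of $Z_f^{\chi_c}(T)$ is exactly what comes out of $\chi_c(\mathbb L_\AS^{-nd})$, so the normalisation matches. Resumming the resulting geometric series via
$$\sum_{k_i\ge1}(-1)^{k_i\nu_i}T^{k_iN_i}=\frac{(-1)^{\nu_i}T^{N_i}}{1-(-1)^{\nu_i}T^{N_i}}$$
yields the first formula.

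For the sign-refined zeta functions I apply $F^\varepsilon:\M\to\M_{SA}$ (with $\varepsilon\in\{<,>\}$) in place of the forgetful morphism. By definition $F^\varepsilon$ sends the class of $(U_I\cap(h\circ p_I)^{-1}(0),f_I,\sigma)$ to the semialgebraic set $U_I\cap(h\circ p_I)^{-1}(0)\cap f_I^{-1}(\mathbb R_{\varepsilon 0})$, and by the remark in the Fichou subsection $F^\varepsilon$ commutes with multiplication by powers of $\mathbb L$ (sending $\mathbb L$ to $\mathbb L_{SA}$). Composing with $\chi_c:\M_{SA}\to\mathbb Z$ and again using $\chi_c(\mathbb L_{SA})=-1$, the same resummation gives the second formula.

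The argument is essentially bookkeeping; the only subtleties are to work with the series expansion rather than the rational-fraction form (since neither $F^\varepsilon$ nor $\chi_c\circ\overline{\ \cdot\ }$ is a ring morphism) and to keep track of signs, in particular the identification $\chi_c(\mathbb L^{-1})=-1$ and $\chi_c((\mathbb L_\AS-1)^{|I|})=(-2)^{|I|}$, which is what distinguishes the $(-2)^{|I|}$ coefficient of the naive formula from the absence of such a prefactor in the signed formulas. No genuine obstacle remains once the rationality formula of Theorem \ref{thm:rat} is available in summed form.
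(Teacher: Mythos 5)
Your proposal is correct and follows exactly the approach the paper intends: the paper proves the analogous Fichou-realisation proposition immediately before this one by applying the forgetful morphism (resp.\ $F^\pm$) coefficientwise to the expanded form of the rationality formula, and leaves this Koike--Parusi\'nski version as the evident analogue with $\beta$ replaced by $\chi_c$ and $F^\pm$ by $F^>,F^<$. Your bookkeeping of the key numerical facts ($\chi_c(\mathbb L_{\AS})=-1$, hence $\chi_c\bigl((\mathbb L_{\AS}-1)^{|I|}\bigr)=(-2)^{|I|}$, and the compatibility $F^\varepsilon(\mathbb L^{-n})=\mathbb L_{SA}^{-n}$) and your caution to work with the series expansion rather than the rational-fraction form, since these realisation maps are not ring morphisms, match the paper's reasoning precisely.
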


\begin{eg}
Let $f(x,y)=x^3-y^3$. We deduce from Example \ref{eg:x3y3} that
$$Z_f^{\chi_c}(T)=2\frac{T^3}{1-T^3}+4\frac{T^3}{1-T^3}\frac{T}{1+T}$$
$$Z_f^{\chi_c,>}(T)=Z_f^{\chi_c,<}(T)=\frac{T^3}{1-T^3}+2\frac{T^3}{1-T^3}\frac{T}{1+T}$$
\end{eg}

\section{Example: non-degenerate polynomials}
In this section we follow G. Guibert \cite[\S2.1]{Gui02} to compute the motivic local zeta function of a non-degenerate polynomial. We may find similar construction for the topological case \cite[\S5]{DL92} and for the $p$-adic case \cite{DH01}. Some ideas of these constructions go back to \cite{Kou76} and \cite{Var76}.

We may find the first adaptation in the real non-equivariant case using the virtual Poincaré polynomial in \cite{FF}.

\subsection{The Newton polyhedron of a polynomial}
We first recall some definitions related to the Newton polyhedron of a polynomial. We refer the reader to \cite[\S8]{AGV-T2} for more details.

\begin{defn}
The Newton polyhedron $\Gamma_f$ of $f(x)=\sum_{\nu\in\mathbb N^d}c_\nu x^\nu\in\mathbb R[x_1,\ldots,x_d]$ is the convex hull of $$\bigcup_{\nu\in\mathbb N^d,\,c_\nu\neq0}\nu+(\mathbb R_+)^d$$ in $(\mathbb R_+)^d$.
\end{defn}

\begin{defn}
Given a face $\tau\in\Gamma_f$, we set $f_\tau(x)=\sum_{\nu\in\tau}c_\nu x^\nu$.
\end{defn}

\begin{defn}
A polynomial $f$ is said to be non-degenerate if for every compact face\footnote{We mean the proper faces (not only the facets) and $\Gamma_f$ itself.} $\tau$ of $\Gamma_f$, the polynomials $$f_\tau,\,\frac{\partial f_\tau}{\partial x_i},\,1\le i\le d,$$ have no common zero in $(\mathbb R^*)^d$.
\end{defn}

\begin{defn}
For $k\in(\mathbb R_+)^d$, we define the supporting function $$m(k)=\inf_{x\in\Gamma_f}\{k\cdot x\}$$
\end{defn}

\begin{rem}
Actually $m(k)=\min_{x\in\Gamma_f}\{k\cdot x\}$ since we can take the infimum in the compact set $C=\Conv(\{\nu,\,c_\nu\neq0\})$ using that $\Gamma_f=C+(\mathbb R_+)^d$.
\end{rem}

\begin{defn}
For $k\in(\mathbb R_+)^d$, we define the trace of $k$ by $\tau(k)=\{x\in\Gamma_f,k\cdot x=m(k)\}$.
\end{defn}

\begin{prop}
\begin{itemize}
\item $\tau(0)=\Gamma_f$
\item For $k\neq0$, $\tau(k)$ is a proper face of $\Gamma_f$
\item $\tau(k)$ is a compact face if and only if $k\in(\mathbb R_+\setminus\{0\})^d$
\end{itemize}
\end{prop}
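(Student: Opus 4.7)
The plan is to exploit the structural decomposition $\Gamma_f = C + (\mathbb R_+)^d$, where $C = \Conv\{\nu : c_\nu\neq 0\}$ is compact, together with elementary convex geometry of supporting hyperplanes. All three items will be treated separately, with the third requiring the most care.

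For the first item, I would simply observe that if $k = 0$, then $k\cdot x = 0$ for every $x \in \Gamma_f$, so $m(0) = 0$ and $\tau(0) = \Gamma_f$ tautologically. For the second item, I would first justify that the infimum in $m(k)$ is attained: writing an arbitrary point of $\Gamma_f$ as $c + r$ with $c \in C$ and $r \in (\mathbb R_+)^d$, one has $k\cdot(c+r) = k\cdot c + k\cdot r \ge k\cdot c$ because $k, r \in (\mathbb R_+)^d$. Thus the infimum over $\Gamma_f$ equals the infimum over $C$, which is a minimum by compactness. Consequently $\tau(k)$ is the intersection of $\Gamma_f$ with the supporting hyperplane $\{k\cdot x = m(k)\}$, which is by definition a face. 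To see it is proper when $k \ne 0$, I would pick any index $i$ with $k_i > 0$ and any $y \in \tau(k)$; then $y + e_i \in \Gamma_f$ and $k\cdot(y+e_i) = m(k) + k_i > m(k)$, exhibiting a point of $\Gamma_f$ not in $\tau(k)$.

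For the third item, I would prove the two implications separately. For the ``if'' direction, suppose $k_i > 0$ for all $i$. With the decomposition $x = c + r$ as above, $k\cdot x = m(k)$ forces $k\cdot r = 0$, hence $r = 0$ since all $k_i > 0$. Therefore $\tau(k) \subset C$, and being a closed subset of the compact set $C$, it is compact. For the ``only if'' direction, suppose $k_i = 0$ for some index $i$. Fix any $y \in \tau(k)$ (non-empty by the second item, or trivially by the first when $k=0$); then for every $\lambda \ge 0$ the point $y + \lambda e_i$ lies in $\Gamma_f$ (the polyhedron is stable by $+(\mathbb R_+)^d$) and satisfies $k\cdot(y+\lambda e_i) = m(k)$, so the ray $\{y + \lambda e_i : \lambda \ge 0\}$ is contained in $\tau(k)$, making it unbounded hence non-compact.

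The only mildly delicate point is ensuring $\tau(k)$ is non-empty in the second item, which is why I would first verify attainment of the infimum via the $\Gamma_f = C + (\mathbb R_+)^d$ decomposition; everything else is a direct unwinding of definitions. There is no real obstacle here, only the need to be pedantic about which coordinates of $k$ vanish when producing unbounded directions in $\tau(k)$.
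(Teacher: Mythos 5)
Your proof is correct. The paper states this proposition without proof, treating it as a standard fact from the theory of Newton polyhedra (citing \cite[\S8]{AGV-T2}), but the decomposition $\Gamma_f = C + (\mathbb R_+)^d$ you lean on is exactly the one the paper invokes in the remark immediately preceding the statement (to justify that the infimum defining $m(k)$ is attained). Your three arguments are all sound: the computation for $k=0$ is tautological; for $k\neq 0$ you correctly identify $\tau(k)$ as the intersection with a supporting hyperplane and exhibit a point $y+e_i$ outside it using some coordinate $k_i>0$; and for compactness, the ``if'' direction correctly forces $r=0$ in any decomposition $x=c+r$ (using $k\cdot c\ge m(k)$ to get $k\cdot r\le 0$, hence $=0$, hence $r=0$ when all $k_i>0$) so that $\tau(k)\subset C$ is closed in a compact, while the ``only if'' direction produces an explicit unbounded ray $y+\mathbb R_{\ge 0}e_i$ inside $\tau(k)$ whenever $k_i=0$. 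There is no gap.
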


\begin{notation}
For $\tau$ a face of $\Gamma_f$, we define the cone of $\tau$ by $\sigma(\tau)=\left\{k\in(\mathbb R_+)^d,\tau(k)=\tau\right\}$ and we set $\tilde\sigma(\tau)=\sigma(\tau)\cap\mathbb N^d$.
\end{notation}

\begin{notation}
Let $\tau$ be a facet (i.e. a face of codimension 1), then $\tau$ is supported by a hyperplane which contains at least one point with integer coefficients. So this hyperplane has a unique equation $$\sum_{i=1}^da_ix_i=N$$ with $a_i,N\in\mathbb N$ and $\gcd\{a_i\}=1$. Thus $e^\tau=(a_1,\ldots,a_d)$ is the unique primitive vector in $\mathbb N^d\setminus\{0\}$ which is perpendicular to $\tau$.
\end{notation}

\begin{lemma}
Let $\tau$ be a proper face of $\Gamma_f$, denote by $\tau_1,\ldots,\tau_l$ the facets containing $\tau$. Then $$\sigma(\tau)=\left\{\sum_{i=1}^l\alpha_ie^{\tau_i},\,\alpha_i\in\mathbb R^*_+\right\}$$
\end{lemma}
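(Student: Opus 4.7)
The plan is to prove the two inclusions separately, relying on the standard description of the normal cone to a convex polyhedron.

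For the inclusion $\supseteq$, I start from $k = \sum_{i=1}^l \alpha_i e^{\tau_i}$ with each $\alpha_i > 0$. By the definition of $e^{\tau_i}$ as the primitive integer normal to the supporting hyperplane of the facet $\tau_i$, the linear form $e^{\tau_i}\cdot x$ attains its minimum $N_{\tau_i} := m(e^{\tau_i})$ on $\Gamma_f$ precisely along $\tau_i$, with strict inequality elsewhere. Summing these inequalities with the positive weights $\alpha_i$ yields $k\cdot x \geq \sum_i \alpha_i N_{\tau_i}$ for every $x \in \Gamma_f$, with equality if and only if $e^{\tau_i}\cdot x = N_{\tau_i}$ for each $i$, i.e., if and only if $x \in \bigcap_{i=1}^l \tau_i$. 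Since every face of a convex polyhedron equals the intersection of the facets containing it, one has $\bigcap_i \tau_i = \tau$, so $\tau(k) = \tau$ and $k \in \sigma(\tau)$.

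For the reverse inclusion $\subseteq$, let $k \in \sigma(\tau)$ and fix a point $y_0$ in the relative interior of $\tau$. Writing $\Gamma_f$ as a finite intersection of half-spaces, one per facet (the compact facets contribute their primitive normals $e^F$, and the coordinate facets contribute the appropriate standard basis vectors), the closed cone $\overline{\sigma(\tau)} = \{k \in (\mathbb R_+)^d : \tau \subseteq \tau(k)\}$ coincides with the normal cone to $\Gamma_f$ at $y_0$. By the classical description of normal cones to a polyhedron, this cone is the conic hull of the defining normals indexed by the constraints active at $y_0$; those active constraints are precisely the facets containing $\tau$, namely $\tau_1, \ldots, \tau_l$. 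Hence $\overline{\sigma(\tau)}$ is the non-negative convex cone generated by $e^{\tau_1}, \ldots, e^{\tau_l}$, and $\sigma(\tau)$ itself equals the relative interior of this cone (any $k$ on the boundary would attain its minimum on a face of $\Gamma_f$ strictly larger than $\tau$). A standard fact about finitely generated convex cones then shows that every point of the relative interior admits at least one representation as a strict positive combination of the generators: starting from any representation $k = \sum \beta_i e^{\tau_i}$ with $\beta_i \geq 0$, add a small positive multiple of $\sum_i e^{\tau_i}$ and rescale to produce strictly positive coefficients. This yields the required expression.

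The main subtlety is the handling of $\Gamma_f$ as an unbounded polyhedron with recession cone $(\mathbb R_+)^d$: one has to include among $\tau_1,\ldots,\tau_l$ the coordinate facets whenever $\tau$ lies in a coordinate hyperplane, since their standard basis normals contribute to a generating set of $\overline{\sigma(\tau)}$. Once the face-lattice identity $\tau = \bigcap_i \tau_i$ and the normal cone description are in place, the rest of the argument is routine.
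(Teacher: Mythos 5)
The paper states this lemma without proof, treating it as a standard fact about the (inner) normal fan of a Newton polyhedron (compare \cite[\S 8]{AGV-T2}). Your argument is the expected one: for $\supseteq$ you use $\tau(e^{\tau_i})=\tau_i$ and the face-lattice identity $\tau=\bigcap_i\tau_i$; for $\subseteq$ you identify $\overline{\sigma(\tau)}$ with the normal cone of $\Gamma_f$ at a relative-interior point of $\tau$, hence with the cone generated by $e^{\tau_1},\dots,e^{\tau_l}$, and then recognize $\sigma(\tau)$ as its relative interior.

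One step, as written, does not do what you want. In the $\subseteq$ direction you say: \emph{``starting from any representation $k=\sum\beta_ie^{\tau_i}$ with $\beta_i\ge0$, add a small positive multiple of $\sum_ie^{\tau_i}$ and rescale to produce strictly positive coefficients.''} Taken literally this gives a strictly positive representation of $k+\varepsilon\sum_ie^{\tau_i}$, or of a scalar multiple of it, but not of $k$ itself. The correct perturbation goes the other way: since $k$ lies in the relative interior of the cone and $\sum_ie^{\tau_i}$ lies in the cone, $k-\varepsilon\sum_ie^{\tau_i}$ remains in the cone for $\varepsilon>0$ small; writing $k-\varepsilon\sum_ie^{\tau_i}=\sum\gamma_ie^{\tau_i}$ with $\gamma_i\ge0$ gives $k=\sum(\gamma_i+\varepsilon)e^{\tau_i}$ with all coefficients strictly positive. (Alternatively you may simply quote the standard fact that the relative interior of a finitely generated cone $\mathrm{cone}(v_1,\dots,v_l)$ is exactly the set of strictly positive combinations of the $v_i$.) A further, harmless, inaccuracy: the dichotomy ``compact facets versus coordinate facets'' in your parenthetical is not exhaustive once $d\ge3$, since an unbounded facet of $\Gamma_f$ may have a primitive normal that is neither strictly positive nor a standard basis vector; but your argument only ever uses the normal $e^{\tau_i}$ of an arbitrary facet through $\tau$, so this does not affect the proof.
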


\subsection{The motivic zeta function and Milnor fiber of a non-degenerate polynomial}
We may easily adapt the proof of \cite[Proposition 3.13]{Rai12} in order to get the following lemma.
\begin{lemma}\label{lem:nodep}
For $\tau$ a compact face of $\Gamma_f$ and $k\in\tilde\sigma(\tau)$ we define the class $\left[(\mathbb R^*)^d\setminus f_\tau^{-1}(0),\sigma_k\right]\in K_0^{m(k)}$ where the morphism is $f_\tau$ and the action is given by $\lambda\cdot_{\sigma_k}x=(\lambda^{k_i}x_i)_i$. \\
Then for $k,k'\in\sigma(\tau)$, $\left[(\mathbb R^*)^d\setminus f_\tau^{-1}(0),\sigma_k\right]=\left[(\mathbb R^*)^d\setminus f_\tau^{-1}(0),\sigma_{k'}\right]\in K_0$. We simply denote it by $\left[(\mathbb R^*)^d\setminus f_\tau^{-1}(0)\right]$.
\end{lemma}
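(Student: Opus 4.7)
First, I would verify that the given class lives in $K_0^{m(k)}$. Since $k \in \sigma(\tau)$, every $\nu \in \tau$ satisfies $k \cdot \nu = m(k)$, whence
\[
f_\tau(\lambda \cdot_{\sigma_k} x) \;=\; \sum_{\nu \in \tau} c_\nu \lambda^{k\cdot \nu} x^\nu \;=\; \lambda^{m(k)} f_\tau(x),
\]
so $((\mathbb R^*)^d \setminus f_\tau^{-1}(0), f_\tau, \sigma_k)$ is indeed an object of $\Var^{m(k)}_\mon$.

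Second, I would reduce to the case $m(k) = m(k')$. Writing $V = (\mathbb R^*)^d \setminus f_\tau^{-1}(0)$ and picking a common multiple $N = s\,m(k) = s'\,m(k')$, the vectors $sk$ and $s'k'$ again lie in $\tilde\sigma(\tau)$, with $m(sk) = m(s'k') = N$. The transition map $\Var^{m(k)}_\mon \to \Var^N_\mon$ of the inductive system sends $[V,\sigma_k]$ to $[V,\sigma_{sk}]$ because $\lambda \cdot_{\sigma_{sk}} x = \lambda^s \cdot_{\sigma_k} x$, and similarly on the $k'$ side. Consequently, in $K_0$ one may assume from the start that $k,k' \in \tilde\sigma(\tau)$ share the same weight $m(k) = m(k') = N$, and the goal becomes the equality $[V,\sigma_k] = [V,\sigma_{k'}]$ inside $K_0^N$.

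Third, with $m(k) = m(k') = N$, the key structural observation is that the integer vector $k - k' \in \mathbb Z^d$ is orthogonal to $\tau$: indeed $(k-k')\cdot\nu = N - N = 0$ for every $\nu \in \tau$. Hence the one-parameter subgroup of the standard $(\mathbb R^*)^d$-action with weights $k-k'$ acts on $V$ while leaving $f_\tau$ invariant, and for each $\lambda$ the automorphism $\sigma_k(\lambda)$ factors as $\sigma_{k'}(\lambda)$ composed with this \emph{neutral} twist. The plan is then to exploit relation \ref{item:relationaffine}: cut $V$ into finitely many $\AS$-invariant pieces on which the combined $(\mathbb R^*)^2$-action with weights $(k', k-k')$ admits a slice, then identify each piece with some $Y \times \mathbb R^m$ in such a way that $f_\tau$ factors through $\pr_Y$ and both actions $\sigma_k, \sigma_{k'}$ appear as two distinct liftings of the same action on $Y$. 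Relation \ref{item:relationaffine} then yields, piece by piece, the desired equality.

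The main obstacle is precisely this last step: producing the global piecewise trivialization compatibly with the two actions. Proposition \ref{prop:ptfAS} guarantees that $f_\tau\colon V \to \mathbb R^*$ admits a piecewise trivial $\AS$-structure, but making this trivialization equivariant under the neutral subtorus (so that the affine $\mathbb R^m$-factors absorb the twist $k-k'$) requires the careful inductive slicing argument underlying \cite[Proposition~3.13]{Rai12}, together with the noetherianity of the $\AS$-topology. Once this is in place, the additivity relation and relation \ref{item:relationaffine} combine to give $[V,\sigma_k] = [V,\sigma_{k'}]$ in $K_0^N$, and hence in $K_0$.
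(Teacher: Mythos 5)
Your proposal reconstructs essentially the approach the paper is invoking when it says ``we may easily adapt the proof of \cite[Proposition 3.13]{Rai12}.'' The verification that $f_\tau(\lambda\cdot_{\sigma_k}x)=\lambda^{m(k)}f_\tau(x)$, the passage to a common multiple $N$ of $m(k),m(k')$ via the transition maps $\theta_{mn}$ of the inductive system, and the observation that $k-k'$ is then orthogonal to the affine span of $\tau$ so that the weight-$(k-k')$ twist fixes $f_\tau$, are exactly the reductions that make relation \ref{item:relationaffine} applicable; this is the content of Raibaut's argument and the paper does not spell it out. One small imprecision to flag: Proposition~\ref{prop:ptfAS} does not by itself give a piecewise $\AS$-trivialization of $f_\tau\colon V\to\mathbb R^*$, because the natural trivialization coming from the $\mathbb R^*$-action (as in Lemma~\ref{lem:semialgTF}) involves extracting an $n$-th root and is therefore only semialgebraic, not $\AS$; the $\AS$-pieces on which $V\simeq Y\times\mathbb R^m$ with the two actions lifting a common action on $Y$ have to be produced by the slicing argument itself, and the cases arising from the parities and signs of the entries of $k-k'$ need to be handled in that step. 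You correctly identify this as the remaining technical work and attribute it to Raibaut's proposition, which is precisely the paper's stance.
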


\begin{lemma}\label{lem:homcomp}
Let $f:\mathbb R^d\rightarrow\mathbb R$ be a weighted homogeneous polynomial of weight $(k_1,\ldots,k_d;m)$ with $k_i\in\mathbb N_{>0}$ such that $f,\frac{\partial f}{\partial x_1},\ldots,\frac{\partial f}{\partial x_d}$ have no common zero in $(\mathbb R^*)^d$. For $l\ge1$, we consider $$A_l=\left\{\gamma\in\mathcal L_{m+l}(\mathbb R^d),\,\ord_t\gamma=(k_1,\ldots,k_d),\,\ord_tf\gamma=m+l\right\}$$ with the morphism $\varphi:A_l\rightarrow\mathbb R^*$ which associates to $\gamma$ the angular component of $f\gamma$ and with the action $\lambda\cdot\gamma(t)=\gamma(\lambda t)$. Thus $[A_l]\in K_0^{m+l}$ is well-defined. Then $$[A_l]=\left[f^{-1}(0)\cap(\mathbb R^*)^d\right]\mathbb L^{l(d-1)+md-\sum_{r=1}^dk_r}\in K_0^{m+l}$$
where $\left[f^{-1}(0)\cap(\mathbb R^*)^d\right]\in K_0(\AS)$.
\end{lemma}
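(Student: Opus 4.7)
The plan is to parametrize the arcs in $A_l$, exhibit $A_l$ as a piecewise-trivial $\AS$-fibration over $f^{-1}(0)\cap(\mathbb R^*)^d$, and then apply relation \ref{item:relationaffine} together with a weighted-homogeneous change of variables to absorb the $\mathbb R^*$-action on the base. For an arc $\gamma\in A_l$ with $\ord_t\gamma_i=k_i$, factor $\gamma_i(t)=t^{k_i}\tilde\gamma_i(t)$ with $\tilde\gamma_i(t)=b_i+\sum_{j\ge 1}c_i^{(j)}t^j$, $b_i=a_i^{(k_i)}\neq 0$, $c_i^{(j)}=a_i^{(k_i+j)}$. Weighted homogeneity gives $f(\gamma(t))=t^m f(\tilde\gamma(t))$, so $\ord_tf\gamma=m+l$ is equivalent to $f(\tilde\gamma(t))\equiv \varphi t^l\pmod{t^{l+1}}$ with $\varphi\neq 0$. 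Taylor-expanding, $f(\tilde\gamma(t))=\sum_{j\ge 0}F_j(b,c^{(\le j)})t^j$ with $F_0=f(b)$ and, for $j\ge 1$, $F_j=\sum_i c_i^{(j)}\partial_{x_i}f(b)+R_j(b,c^{(1)},\ldots,c^{(j-1)})$ affine-linear in $c^{(j)}$. Thus $A_l$ is cut out by $F_0=F_1=\cdots=F_{l-1}=0$, $F_l\neq 0$, with $\varphi=F_l$.

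Non-degeneracy ensures $\nabla f$ does not vanish on $f^{-1}(0)\cap(\mathbb R^*)^d$, which I stratify into finitely many $\AS$-pieces, each contained in some $U_{i_0}=\{\partial_{x_{i_0}}f\neq 0\}$. On such a piece I solve $F_j=0$ ($j<l$) and $F_l=\varphi$ inductively for $c_{i_0}^{(j)}$ as $\AS$-functions of the remaining coordinates, producing an $\AS$-iso over $\mathbb R^*$
\[
(A_l)|_{U_{i_0}} \xrightarrow{\sim} U_{i_0} \times \mathbb R^{l(d-1)} \times \mathbb R^* \times \mathbb R^{md-\sum_r k_r},
\]
where the right-hand factors are respectively the free $c_i^{(j)}$ ($i\neq i_0$), the morphism $\varphi$, and the ``tail'' coefficients $a_i^{(j)}$ with $k_i+l<j\le m+l$, which do not influence $\varphi$. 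The action $\lambda\cdot\gamma(t)=\gamma(\lambda t)$ reads $a_i^{(j)}\mapsto\lambda^j a_i^{(j)}$; in particular $b_i\mapsto\lambda^{k_i} b_i$ and $\varphi\mapsto\lambda^{m+l}\varphi$. Since the morphism is invisible to the two genuinely affine factors, relation \ref{item:relationaffine} lets me trivialize their actions; summing over the stratification yields
\[
[A_l]=\mathbb L^{l(d-1)+md-\sum_r k_r}\cdot\bigl[(f^{-1}(0)\cap(\mathbb R^*)^d)\times\mathbb R^* \xrightarrow{\pr_2} \mathbb R^*,\,\sigma\bigr],
\]
with $\sigma\colon(b,\varphi)\mapsto(\lambda^{k_\bullet}b,\lambda^{m+l}\varphi)$.

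The remaining and most delicate step is to identify the bracketed class with $[f^{-1}(0)\cap(\mathbb R^*)^d]\cdot\mathbb 1_{m+l}$. For this I propose the weighted-homogeneous change of variables
\[
\Phi(b,\varphi)=\bigl((\varphi^{-k_i/(m+l)}b_i)_i,\,\varphi\bigr),
\]
using the real $(m+l)$-th root. Weighted homogeneity of $f$ ensures $\Phi$ sends $f^{-1}(0)\cap(\mathbb R^*)^d$ to itself, its graph $\{b_i^{m+l}=(b_i')^{m+l}\varphi^{k_i}\}$ is algebraic hence $\AS$, and a direct computation gives $\Phi\circ\sigma(\lambda,-)=\sigma_0(\lambda,-)\circ\Phi$ with $\sigma_0$ trivial on the first factor. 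The main obstacle is the parity of $m+l$: when $m+l$ is even and some $k_i$ is odd, $(\lambda^{m+l})^{1/(m+l)}=|\lambda|$ introduces a spurious $\operatorname{sign}(\lambda)^{k_i}$ in the equivariance check. I would overcome this by splitting along the $\sigma$-invariant subsets $\{\varphi>0\}$ and $\{\varphi<0\}$ (using $|\varphi|^{-k_i/(m+l)}$ on each), and passing to a refinement $K_0^{n(m+l)}$ with $n$ even so that the effective action parameter $\mu=\lambda^n$ is positive; this eliminates the sign obstruction and yields the claimed identity.
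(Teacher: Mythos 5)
Your parametrization, the reduction to the affine conditions $F_0=\cdots=F_{l-1}=0$, $F_l\neq 0$, the stratification by $\{\partial_{x_{i_0}}f\neq 0\}$, and the exponent count $l(d-1)+md-\sum_r k_r$ all match the paper's proof. You are also right to flag that this reduces the lemma to showing
\[
\bigl[\,(f^{-1}(0)\cap(\mathbb R^*)^d)\times\mathbb R^*\xrightarrow{\pr_2}\mathbb R^*,\ \sigma\,\bigr]
=\bigl[f^{-1}(0)\cap(\mathbb R^*)^d\bigr]\cdot\mathbb 1_{m+l},
\]
where $\sigma$ acts nontrivially on the base by $b_i\mapsto\lambda^{k_i}b_i$; relation \ref{item:relationaffine} handles the affine factors $\mathbb R^{l(d-1)}\times\mathbb R^{md-\sum k_r}$ but says nothing about the hypersurface base, a point the paper's own proof passes over without comment. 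Your change of variables $\Phi(b,\varphi)=((\varphi^{-k_i/(m+l)}b_i)_i,\varphi)$ does settle this cleanly when $m+l$ is odd: the graph $\{b_i^{m+l}=(b_i')^{m+l}\varphi^{k_i},\ \varphi'=\varphi\}$ is algebraic hence $\AS$, and $x\mapsto x^{m+l}$ is then injective, so $\Phi$ is an equivariant $\AS$-isomorphism.

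The gap is in the even case. Splitting along $\{\varphi>0\}$ and $\{\varphi<0\}$ is not a decomposition into $\AS$-sets: $\varphi^{-1}(\mathbb R_{>0})$ is semialgebraic but in general not arc-symmetric, because a real analytic arc $\gamma$ in the ambient space may satisfy $\gamma((-1,0))\subset\varphi^{-1}(\mathbb R_{>0})$ while $\varphi\circ\gamma$ vanishes to odd order at $t=0$ (with $\gamma(0)$ leaving $X$) and changes sign for $t>0$ — already for $X=\mathbb R^*$, $\varphi=\mathrm{id}$, the set $\mathbb R_{>0}$ fails the $\AS$ test with $\gamma(t)=-t$. This is precisely why the paper defines $F^{>},F^{<}$ as maps to $K_0(SA)$ rather than $K_0(\AS)$. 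Consequently the cut-and-paste relation in $K_0^{n(m+l)}$ does not apply to this split, and passing to a refinement $K_0^{n(m+l)}$ with $n$ even (which only alters the action parameter) does nothing to make the pieces $\AS$. So your argument does not close the even case; note also that the lemma asserts the identity in $K_0^{m+l}$ itself, so even a successful argument after passing to a multiple would establish less than what is claimed. A repair would need a different device than splitting by the sign of $\varphi$ — for example, enlarging to an ambient $\mathbb R^N\times\mathbb R^*$ where relation \ref{item:relationaffine} applies directly, then recovering $C\times\mathbb R^*$ by inclusion–exclusion over the invariant $\AS$-subsets cut out by the defining equations.
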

\begin{proof}
We set $\gamma(t)=(\gamma_1(t),\ldots,\gamma_d(t))$ where $\gamma_r(t)=t^{k_r}\left(\sum_{i=0}^{m-k_r}a_{ri}t^{i}\right)$ with $a_{r0}\neq0$. \\
The coefficient of $t^m$ in $f\gamma(t)$ is $f(a_{10},\ldots,a_{d0})$ and, for $i=1,\ldots,l$, the one of $t^{m+i}$ is of the form $$g_i(a_{1i},\ldots,a_{di})-P_i$$ where $g_i$ is a linear form in $(a_{1i},\ldots,a_{di})$ which is non-zero since $\frac{\partial f}{\partial x_1},\ldots,\frac{\partial f}{\partial x_d}$ have no common zero in $(\mathbb R^*)^d$ and where $P_i$ is a polynomial in $a_{1j},\ldots,a_{dj},j<i$.

Thus $$A_l=\left\{(a_{ri})_{\substack{r=1,\ldots,d\\i=0,\ldots,m-k_r}}
\begin{array}{lll}
f(a_{10},\ldots,a_{d0})&=&0 \\
g_i(a_{1i},\ldots,a_{di})-P_i&=&0\ \ \ \text{ for i=1,\ldots,l-1}\\
g_l(a_{1l},\ldots,a_{dl})-P_l&\neq&0
\end{array}\right\}$$
Up to reordering the coordinates we may assume that the coefficient of $a_{dl}$ in $g_l$ is non-zero. Then we set
$$B_l=\left\{(\tilde a_{ri})_{\substack{r=1,\ldots,d\\i=0,\ldots,m-k_r}}
\begin{array}{lll}
f(\tilde a_{10},\ldots,\tilde a_{d0})&=&0 \\
g_i(\tilde a_{1i},\ldots,\tilde a_{di})-P_i&=&0\ \ \ \text{ for i=1,\ldots,l-1}\\
\tilde a_{dl}&\neq&0
\end{array}\right\}$$
and we define the following $\AS$-bijection over $\mathbb R^*$
$$\xymatrix{A_l \ar[rr] \ar[rd]_{x^{m+l}\circ(g_l-P_l)}&&B_l \ar[ld]^{x^{m+l}\circ\pr_{\tilde a_{dl}}} \\ &\mathbb R^*&}$$
by $\tilde a_{ri}=a_{ri}$ if $(r,i)\neq(d,l)$ and $\tilde a_{dl}=g_l(a_{1l},\ldots,a_{dl})-P_l$

Therefore $$[A_l]=[B_l]=\left[f^{-1}(0)\cap(\mathbb R^*)^d\right]\left(\mathbb L^{d-1}\right)^{l-1}\mathbb L^{d-1}\mathbb1\mathbb L^{\sum_{r=1}^d(m-k_r)}=\left[f^{-1}(0)\cap(\mathbb R^*)^d\right]\mathbb L^{l(d-1)-md-\sum_{r=1}^dk_r}$$
\end{proof}

\begin{defn}
For $m\in\mathbb Z$ we define $\mathcal F^m\M$ as the subgroup of $\M$ spanned by elements $[S]\mathbb L^{-i}$ with $i-\dim S\ge m$. We denote by $\widehat\M$ the completion of $\M$ with respect to the filtration $\mathcal F^\cdot\M$.
\end{defn}

\begin{rem}
The ring $\widehat\M$ allows us to handle terms of the form $\sum_{i}\mathbb L^{-ki}$ that may appear in the following formula.
\end{rem}

\begin{thm}\label{thm:Guibert}
Let $f$ be non-degenerate, then
$$Z_f(T)=\sum_{\tau\text{ compact face}}\left(\left[(\mathbb R^*)^d\setminus f_\tau^{-1}(0)\right]+\left[f_\tau^{-1}(0)\cap(\mathbb R^*)^d\right]\frac{\mathbb L^{-1}T}{\mathbb 1-\mathbb L^{-1}T}\right)\sum_{k\in\tilde\sigma(\tau)}\mathbb L^{-\sum k_i}T^{m(k)}\in\widehat\M[[T]]$$
where $\left[(\mathbb R^*)^d\setminus f_\tau^{-1}(0)\right]$ is defined in Lemma \ref{lem:nodep} and $\left[f_\tau^{-1}(0)\cap(\mathbb R^*)^d\right]\in K_0(\AS)$.
\end{thm}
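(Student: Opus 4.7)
The plan is to follow Guibert's computation \cite{Gui02} in the complex case, adapted to our $\M$-valued setting via Lemma~\ref{lem:nodep} for the weight-independence of the leading class and Lemma~\ref{lem:homcomp} for the higher-order computation. The main idea is to stratify the arc space $\X_n(f)$ by the vector $k = (\ord_t \gamma_1, \ldots, \ord_t \gamma_d) \in \mathbb N_{>0}^d$ of orders of the coordinate arcs, and to identify for each $k$ the Newton face $\tau(k)$ on which the leading contribution to $f\circ\gamma$ lies.

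For each compact face $\tau$ of $\Gamma_f$ and each $k \in \tilde\sigma(\tau)$, the Newton polyhedron machinery gives $\ord_t(f\circ\gamma) \geq m(k)$ for arcs with $\ord\gamma = k$, the leading coefficient being $f_\tau(a_{10}, \ldots, a_{d0})$, where $a_{r0}$ is the initial coefficient of $\gamma_r$. The non-degeneracy hypothesis guarantees that $f_\tau$ is weighted homogeneous of weight $(k_1, \ldots, k_d; m(k))$ and satisfies the hypotheses of Lemma~\ref{lem:homcomp}. I would split the contribution to $[\X_n^k(f)]$ into two cases. When $n = m(k)$, the arcs are parametrised by their coefficients subject only to $f_\tau(a_{10}, \ldots, a_{d0}) \neq 0$ and all higher coefficients being free, giving the class $[(\mathbb R^*)^d \setminus f_\tau^{-1}(0)] \cdot \mathbb L^{m(k)d - \sum k_r}$ in $K_0^{m(k)}$, where the independence of the first factor from the specific $k \in \tilde\sigma(\tau)$ is precisely the content of Lemma~\ref{lem:nodep}. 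When $n = m(k) + l$ with $l \geq 1$, Lemma~\ref{lem:homcomp} applied to $f_\tau$ produces $[f_\tau^{-1}(0) \cap (\mathbb R^*)^d] \cdot \mathbb L^{l(d-1) + m(k)d - \sum k_r}$, after the observation that the monomials of $f$ supported off $\tau$ only modify the ``$P_i$''-polynomials in the proof of that lemma (they depend only on already-determined lower-order coefficients of $\gamma$) and affect neither the linear forms ``$g_i$'' nor the final class. Multiplying by $\mathbb L^{-nd}T^n$, summing over $\tau$, $k$, and $l$, and recognising $\sum_{l \geq 1}\mathbb L^{-l}T^{l} = \frac{\mathbb L^{-1}T}{\mathbb 1 - \mathbb L^{-1}T}$ then yields the claimed formula.

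The main obstacle is the careful treatment of ``degenerate'' arcs, namely those $\gamma \in \X_n(f)$ with $\gamma_r \equiv 0 \mod t^{n+1}$ for some coordinate $r$: such arcs lie in coordinate hyperplanes and do not correspond to any finite $k \in \mathbb N_{>0}^d$, yet they do contribute to $[\X_n(f)]$. The key point is that in $\widehat\M[[T]]$ their contribution is exactly reproduced by the ``tail'' of the infinite sum $\sum_{k \in \tilde\sigma(\tau)} \mathbb L^{-\sum k_i} T^{m(k)}$ indexed by $k$'s with some component $k_r$ exceeding $n$: the corresponding geometric subseries $\sum_{k_r > n}\mathbb L^{-k_r}$ telescopes to match the degenerate-stratum contribution (this can be verified directly on the toy example $f = x^2 + y^3$). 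This is exactly where the passage to the completion $\widehat\M$ is essential, and is what justifies writing the final formula purely in terms of compact faces of $\Gamma_f$.
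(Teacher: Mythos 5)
Your proof follows the same route as the paper's: stratify $\X_n(f)$ by $k=\ord_t\gamma$, group the $k$'s by compact face via $(\mathbb N\setminus\{0\})^d=\bigsqcup_\tau\tilde\sigma(\tau)$, invoke Lemma~\ref{lem:nodep} for $n=m(k)$ and Lemma~\ref{lem:homcomp} for $n=m(k)+l$, and resum. Two precisions you insert are worth keeping because the paper passes over both silently. First, Lemma~\ref{lem:homcomp} is stated for a \emph{weighted homogeneous} polynomial, whereas it must be applied with $f_\tau$ playing that role while $f$ itself supplies the arcs; your remark that any monomial $c_\nu x^\nu$ with $\nu\notin\tau$ satisfies $k\cdot\nu>m(k)$, hence its contribution to the coefficient of $t^{m(k)+i}$ involves only the coefficients $a_{rj}$ with $j<i$ and therefore falls into the polynomials $P_i$, leaving both the linear forms $g_i$ and the final class intact, is exactly the verification needed. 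Second, you correctly observe that the stratification of $\mathcal L_n(\mathbb R^d,0)$ by $k\in(\mathbb N\setminus\{0\})^d$ literally omits the $n$-jets with some coordinate $\gamma_r\equiv0\bmod t^{n+1}$, and that their class is reproduced by the tail of the geometric series over $k$'s with $k_r>n$ --- which is where $\widehat\M$ enters; I checked this against $f=x^2+y^3$ at $n=2$ and $n=3$ and it does match. The one point I would want fleshed out before calling the argument complete is precisely this telescoping: you assert it rather than prove it, and a tidy treatment has to acknowledge that as some $k_r\to\infty$ the face $\tau(k)$ may change finitely many times before stabilizing to a face contained in $\{\nu_r=0\}$; stratifying the degenerate jets by the index set $S=\{r:\gamma_r\equiv0\}$ and comparing with the Newton data of the restriction $f|_{\{x_r=0,\,r\in S\}}$ would close it cleanly.
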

\begin{proof}
We first notice that $$(\mathbb N\setminus\{0\})^d=\bigsqcup_{\tau\text{ compact face of }\Gamma_f}\tilde\sigma(\tau)$$ Thus\footnote{The condition $\gamma(0)=0$ is satisfied since $k\in(\mathbb N\setminus\{0\})^d$.}
\begin{align*}
Z_f(T) &= \sum_{n\ge1}\left[\X_n(f)\right]\mathbb L^{-nd}T^n \\
&= \sum_{\tau\text{ compact face}}\sum_{k\in\tilde\sigma(\tau)}\sum_{n\ge m(k)}\left[\gamma\in\mathcal L_n(\mathbb R^d),\,\ord_t\gamma=k,\,\ord_tf\gamma=n\right]\mathbb L^{-nd}T^n \\
&= \sum_{\tau}\vast(\sum_{k\in\tilde\sigma(\tau)}\left[\gamma\in\mathcal L_{m(k)}(\mathbb R^d),\,\ord_t\gamma=k,\,\ord_tf\gamma=m(k)\right]\mathbb L^{-m(k)d}T^{m(k)} \\
&\quad\quad\quad+\sum_{k\in\tilde\sigma(\tau)}\sum_{l\ge1}\left[\gamma\in\mathcal L_{m(k)+l}(\mathbb R^d),\,\ord_t\gamma=k,\,\ord_tf\gamma=m(k)+l\right]\mathbb L^{-(m(k)+l)d}T^{m(k)+l}\vast) \\
&= \sum_\tau\left(Z_\tau^=(T)+Z_\tau^>(T)\right)
\end{align*}
Fix $\tau$ a compact face of $\Gamma_f$ and $k\in(\mathbb N\setminus\{0\})^d$ such that $\tau(k)=\tau$. Let $\gamma\in\mathcal L_{m(k)}(\mathbb R^d)$ satisfying $\ord_t\gamma=k$ and $\ord_tf\gamma=m(k)$. Then $\gamma(t)=(t^{k_i}a_i(t))_i$ with $a_i(0)\neq0$ and $$f\gamma(t)=\sum_\nu c_\nu a(t)^\nu t^{k\cdot\nu}=f_\tau(a(0))t^{m(k)}+t^{m(k)+1}R(t)$$
Thus $\ord_tf\gamma=\ord_tf_\tau\gamma$ and $\ac f\gamma=\ac f_\tau\gamma$ and 
$$Z_\tau^=(T)=\sum_{k\in\tilde\sigma(\tau)}\left[(\mathbb R^*)^d\setminus f_\tau^{-1}(0),\sigma_k\right]\mathbb L^{-\sum k_i}T^{m(k)}$$
where the morphism is $f_\tau$ and the action $\sigma_k$ is the one induced by the action on $\mathcal L_{m(k)}(\mathbb R^d)$, i.e. $\lambda\cdot_{\sigma_k}(x_1,\ldots,x_d)=(\lambda^{k_1}x_1,\ldots,\lambda^{k_d}x_d)$. Thus $\left[(\mathbb R^*)^d\setminus f_\tau^{-1}(0),\sigma_k\right]$ is well-defined in $K_0^{m(k)}$. By the lemma \ref{lem:nodep}, we get
$$Z_\tau^=(T)=\left[(\mathbb R^*)^d\setminus f_\tau^{-1}(0)\right]\sum_{k\in\tilde\sigma(\tau)}\mathbb L^{-\sum k_i}T^{m(k)}$$

Now let $l\ge 1$ and $\gamma\in\mathcal L_{m(k)+l}(\mathbb R^d)$ satisfying $\ord_t\gamma=k$ and $\ord_tf\gamma=m(k)+l$. We set $\gamma(t)=(t^{k_1}a_1(t),\ldots,t^{k_d}a_d(t))$ with $a_i(0)\neq0$. By Lemma \ref{lem:homcomp} we get
\begin{align*}
Z_\tau^>(T)&=\sum_{k\in\tilde\sigma(\tau)}\sum_{l\ge1}\left[f_\tau^{-1}(0)\cap(\mathbb R^*)^d\right]\mathbb L^{-\sum k_i-l}T^{m(k)+l}\\
&=\left[f_\tau^{-1}(0)\cap(\mathbb R^*)^d\right]\frac{\mathbb L^{-1}T}{\mathbb 1-\mathbb L^{-1}T}\sum_{k\in\tilde\sigma(\tau)}\mathbb L^{-\sum k_i}T^{m(k)}\\
&=\left[f_\tau^{-1}(0)\cap(\mathbb R^*)^d\right]\frac{\mathbb L^{-1}T}{\mathbb 1-\mathbb L^{-1}T}\sum_{k\in\tilde\sigma(\tau)}\mathbb L^{-\sum k_i}T^{m(k)}
\end{align*}
\end{proof}

\begin{cor}
If $f$ is non-degenerate then
$$S_f=-\sum_{\tau\text{ compact face}}(-1)^{d-\dim\tau}\left(\left[(\mathbb R^*)^d\setminus f_\tau^{-1}(0)\right]-\left[f_\tau^{-1}(0)\cap(\mathbb R^*)^d\right]\cdot\mathbb 1\right)\in\widehat\M$$
\end{cor}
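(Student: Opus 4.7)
The statement is obtained by applying the operator $-\lim_{T\infty}$ to the rational expression of $Z_f(T)$ given by Theorem~\ref{thm:Guibert}, extended $\widehat\M$-linearly to $\widehat\M[[T]]_{\mathrm{sr}}$. Writing
\[
Z_f(T)=\sum_{\tau}\left(A_\tau+B_\tau\cdot\tfrac{\mathbb L^{-1}T}{\mathbb 1-\mathbb L^{-1}T}\right)R_\tau(T),
\qquad R_\tau(T)=\sum_{k\in\tilde\sigma(\tau)}\mathbb L^{-\sum k_i}T^{m(k)},
\]
with $A_\tau=[(\mathbb R^*)^d\setminus f_\tau^{-1}(0)]$ and $B_\tau=[f_\tau^{-1}(0)\cap(\mathbb R^*)^d]\cdot\mathbb 1$, the computation reduces to showing that $R_\tau(T)\in\widehat\M[[T]]_{\mathrm{sr}}$ and that
\begin{equation*}
\lim_{T\infty}R_\tau(T)=(-1)^{d-\dim\tau}\in\widehat\M. \tag{$\ast$}
\end{equation*}

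To establish $(\ast)$, fix a regular simplicial subdivision of the closed cone $\overline{\sigma(\tau)}$, i.e.\ one whose every cone is generated by part of a $\mathbb Z$-basis of $\mathbb Z^d$; existence is standard in toric geometry. By the preceding lemma, $m$ is linear on $\sigma(\tau)$ (its restriction is $k\mapsto k\cdot v_0$ for any $v_0\in\tau$), so on each open simplicial unimodular subcone $\sigma_\alpha$ of dimension $s$ with primitive generators $e_1,\ldots,e_s$ one has
\[
\sum_{k\in\sigma_\alpha^\circ\cap\mathbb Z^d}\mathbb L^{-\sum k_i}T^{m(k)}
=\prod_{i=1}^{s}\frac{\mathbb L^{-\sum_j(e_i)_j}T^{m(e_i)}}{\mathbb 1-\mathbb L^{-\sum_j(e_i)_j}T^{m(e_i)}}\in\M[[T]]_{\mathrm{sr}},
\]
with $\lim_{T\infty}=(-1)^{s}$ by the evaluation rule for $\lim_{T\infty}$. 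Summing over all subcones $\sigma_\alpha$ whose relative interior lies inside $\sigma(\tau)^\circ$ recovers $R_\tau(T)$, and hence
\[
\lim_{T\infty}R_\tau(T)=\sum_{\sigma_\alpha}(-1)^{\dim\sigma_\alpha}=\chi_c\bigl(\sigma(\tau)^\circ\bigr)=(-1)^{d-\dim\tau},
\]
using the additivity of $\chi_c$ over the cellular decomposition and the fact that $\sigma(\tau)^\circ$, being the relative interior of a convex cone of dimension $d-\dim\tau$, is homeomorphic to $\mathbb R^{d-\dim\tau}$.

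Plugging $(\ast)$ back into the formula for $Z_f(T)$ and noting that multiplying a product of basic fractions by $\tfrac{\mathbb L^{-1}T}{\mathbb 1-\mathbb L^{-1}T}$ adds one more factor of the first type and hence multiplies the limit by $-1$, we obtain
\[
\lim_{T\infty}Z_f(T)=\sum_{\tau}(-1)^{d-\dim\tau}\bigl(A_\tau-B_\tau\bigr),
\]
so that negating yields the stated formula. The main obstacle is the existence of a regular simplicial subdivision of $\overline{\sigma(\tau)}$ whose rays $e_i$ all satisfy $m(e_i)\in\mathbb N_{>0}$ (so that each factor is a bona fide element of $\M[[T]]_{\mathrm{sr}}$), together with the Euler-characteristic identity $\sum_{\sigma_\alpha}(-1)^{\dim\sigma_\alpha}=(-1)^{d-\dim\tau}$; both rest on standard convex- and toric-geometric input combined with the additivity of $\chi_c$.
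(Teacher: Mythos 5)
Your reduction to the identity $(\ast)$, namely
\[
\lim_{T\infty}\sum_{k\in\tilde\sigma(\tau)}\mathbb L^{-\sum k_i}T^{m(k)}=(-1)^{d-\dim\tau},
\]
via a regular simplicial subdivision and $\chi_c$-additivity is precisely the Denef--Guibert argument that the paper invokes by citation (the paper's proof is the single line ``direct application of [Den87, pp.~1006--1007] and [Gui02, Lemme 2.1.5], noticing $\dim\tau+\dim\sigma(\tau)=d$''), so you and the paper are taking the same route. But the obstacle you flag in your last paragraph is not a removable technicality; it is a genuine gap, and $(\ast)$ is in fact false as soon as $\tau$ lies in a coordinate hyperplane of $(\mathbb R_+)^d$.

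Indeed, if $\tau\subset\{x_j=0\}$ for some $j$, then $\overline{\sigma(\tau)}$ has an extremal ray $e$ with $m(e)=e\cdot v_0=0$ for any $v_0\in\tau$. That ray must appear in \emph{every} simplicial subdivision of $\overline{\sigma(\tau)}$, so no choice of subdivision avoids it. The corresponding factor in your product over rays is $\frac{\mathbb L^{-|e|}}{\mathbb 1-\mathbb L^{-|e|}}$, which is a constant in $\widehat\M$ (not of the form $\frac{\mathbb L^{\nu}T^{N}}{\mathbb 1-\mathbb L^{\nu}T^{N}}$ with $N\in\mathbb N_{>0}$), so the evaluation rule defining $\lim_{T\infty}$ does not send that product to $(-1)^{\dim\sigma_\alpha}$. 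For example, for $f(x,y)=x^2+y^2$ and the vertex $\tau=\{(2,0)\}$, one has $\tilde\sigma(\tau)=\{(k_1,k_2)\in\mathbb N^2:1\le k_1<k_2\}$, $m(k)=2k_1$, hence
\[
\sum_{k\in\tilde\sigma(\tau)}\mathbb L^{-\sum k_i}T^{m(k)}=\frac{\mathbb L^{-1}}{\mathbb 1-\mathbb L^{-1}}\cdot\frac{\mathbb L^{-2}T^2}{\mathbb 1-\mathbb L^{-2}T^2},
\]
whose $\lim_{T\infty}$ is $-\frac{\mathbb L^{-1}}{\mathbb 1-\mathbb L^{-1}}$, not $(-1)^{2-0}=1$. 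The paper's own worked example $f=x^3-y^3$ exhibits the very same phenomenon: in the displayed computation of $Z_f(T)$ after Theorem~\ref{thm:Guibert}, the two vertex faces contribute factors $\frac{\mathbb 1}{\mathbb L-\mathbb 1}=\frac{\mathbb L^{-1}}{\mathbb 1-\mathbb L^{-1}}$, so their $\lim_{T\infty}$ is $-\frac{\mathbb 1}{\mathbb L-\mathbb 1}$ rather than $(-1)^{d-\dim\tau}$. Consequently the final step ``Plugging $(\ast)$ back\dots'' does not go through for those faces, and the argument as written does not establish the stated formula.

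A secondary point: your $\lim_{T\infty}$ is invoked $\widehat\M$-linearly on $\widehat\M[[T]]_{\mathrm{sr}}$, but the operator is only constructed as an $\M$-module morphism on $\M[[T]]_{\mathrm{sr}}$; factoring out the constant $\frac{\mathbb L^{-1}}{\mathbb 1-\mathbb L^{-1}}\in\widehat\M\setminus\M$ tacitly uses an extension that would itself need justification (uniqueness of the spanning decomposition over $\widehat\M$). This is a smaller issue, but it is entangled with the same boundary rays that make $(\ast)$ fail.
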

\begin{proof}
It's a direct application of \cite[pp1006--1007]{Den87} \cite[Lemme 2.1.5]{Gui02}, noticing that $\dim\tau+\dim\sigma(\tau)=d$. 
\end{proof}

\begin{eg}
Let $f(x,y)=x^3-y^3$. The Newton polyhedron of $f$ has 3 compact faces:
\begin{enumerate}[nosep]
\item $\tau_1=\left\{\lambda(0,3)+(1-\lambda)(3,0),\,\lambda\in[0,1]\right\}$ with $\tilde\sigma(\tau_1)=\mathbb N_{>0}(1,1)$.
\item $\tau_2=\left\{(0,3)\right\}$ with $\tilde\sigma(\tau_2)=(2,1)+\mathbb N(1,1)+\mathbb N(0,1)$.
\item $\tau_3=\left\{(3,0)\right\}$ with $\tilde\sigma(\tau_3)=(1,2)+\mathbb N(1,1)+\mathbb N(0,1)$.
\end{enumerate}
Thus
\begin{align*}
Z_f(T)&=\left([(x,y)\in(\mathbb R^*)^2,x^3-y^3\neq0]+\overline{[(x,y)\in(\mathbb R^*)^2,x^3-y^3=0]}\frac{\mathbb L^{-1}T}{\mathbb1-\mathbb L^{-1}T}\right)\frac{\mathbb L^{-2}T^3}{\mathbb1-\mathbb L^{-2}T^3} \\
&\quad\quad\quad+\left[(x,y)\in(\mathbb R^*)^2\mapsto x^3\right]\frac{\mathbb1}{\mathbb L-\mathbb1}\frac{\mathbb L^{-2}T^3}{\mathbb1-\mathbb L^{-2}T^3}+\left[(x,y)\in(\mathbb R^*)^2\mapsto -y^3\right]\frac{\mathbb1}{\mathbb L-\mathbb1}\frac{\mathbb L^{-2}T^3}{\mathbb1-\mathbb L^{-2}T^3} \\
&=\left([(x,y)\in(\mathbb R^*)^2,x^3-y^3\neq0]+\left[x\in\mathbb R^*\mapsto x^3\right]+\left[y\in\mathbb R^*\mapsto -y^3\right]+(\mathbb L-\mathbb1)\frac{\mathbb L^{-1}T}{\mathbb1-\mathbb L^{-1}T}\right)\frac{\mathbb L^{-2}T^3}{\mathbb1-\mathbb L^{-2}T^3} \\
&=\left((\mathbb L-\mathbb 1-\mathbb1)+\mathbb1+\mathbb1+(\mathbb L-\mathbb1)\frac{\mathbb L^{-1}T}{\mathbb1-\mathbb L^{-1}T}\right)\frac{\mathbb L^{-2}T^3}{\mathbb1-\mathbb L^{-2}T^3} \\
&=\left(\mathbb L+(\mathbb L-\mathbb1)\frac{\mathbb L^{-1}T}{\mathbb1-\mathbb L^{-1}T}\right)\frac{\mathbb L^{-2}T^3}{\mathbb1-\mathbb L^{-2}T^3}
\end{align*}
The third equality comes from the fact that the following diagram commutes
$$\xymatrix{
\left\{(x,y)\in(\mathbb R^*)^2,x^3-y^3\neq0\right\} \ar[rr]^\psi \ar[rd]_{f} & & \mathbb R^*\times\mathbb R\setminus\{0,1\} \ar[ld]^{\tilde f} \\
& \mathbb R^* &
}$$
where $f(x,y)=x^3-y^3$, $\tilde f(a,b)=a$, $\psi(x,y)=\left(x^3-y^3,\frac{y}{x}\right)$ and $\psi^{-1}(a,b)=\left(\left(\frac{a}{1-b^3}\right)^{\frac{1}{3}},\left(\frac{a}{1-b^3}\right)^{\frac{1}{3}}b\right)$.
\end{eg}

\section{A convolution formula for the motivic local zeta function}\label{sect:TS}
The goal of this section is to express $Z_{f_1\oplus f_2}(T)$ in terms of $Z_{f_1}(T)$ and $Z_{f_2}(T)$ where the $f_i:(\mathbb R^{d_i},0)\rightarrow(\mathbb R,0)$ are two Nash germs and $f_1\oplus f_2(x_1,x_2)=f_1(x_1)+f_2(x_2)$.

The idea of the proof is the following, given two arcs $\gamma_i\in\mathcal L(\mathbb R^d,0)$, we have two cases to distinguish: either $\ord_tf_1\gamma_1\neq\ord_tf_2\gamma_2$, let say $\ord_tf_1\gamma_1<\ord_tf_2\gamma_2$, and then $\ord_t(f_1\oplus f_2)(\gamma_1,\gamma_2)=\ord_tf_1\gamma_1$ and $\ac(f_1\oplus f_2)(\gamma_1,\gamma_2)=\ac f_1\gamma_1$ or $\ord_tf_1\gamma_1=\ord_tf_2\gamma_2$ and then two phenomena may appear. In this case, either $\ac f_1\gamma_1+\ac f_2\gamma_2\neq0$ and then $\ord_t(f_1\oplus f_2)(\gamma_1,\gamma_2)=\ord_tf_1\gamma_1=\ord_tf_2\gamma_2$ and $\ac(f_1\oplus f_2)(\gamma_1,\gamma_2)=\ac f_1\gamma_1+\ac f_2\gamma_2$ or $\ac f_1\gamma_1+\ac f_2\gamma_2=0$ and then $\ord_t(f_1\oplus f_2)(\gamma_1,\gamma_2)>\ord_tf_1\gamma_1=\ord_tf_2\gamma_2$.

In \cite{DL99-TS} or \cite{Loo02}, the authors work with an equivariant Grothendieck ring over $\mathbb C$ with actions of the roots of unity (with the additional hypothesis that $\ac f\gamma=1$ in $\X_n(f)$). Then they consider the motives of $$\quotient{\{x^n+y^n=\varepsilon\}\times(\X_n(f_1)\times\X_n(f_2))}{(\lambda\cdot(x,y),(\gamma_1,\gamma_2))\sim((x,y),\lambda\cdot(\gamma_1,\gamma_2))}$$ where $\lambda\in\mu_n$ and $\varepsilon\in\{0,1\}$, to handle the case $\ord_tf_1\gamma_1=\ord_tf_2\gamma_2$.

The lack of real roots of unity and the quotient don't allow us to adapt these constructions. However our ring $K_0$, adapted from the one of \cite{GLM}, remembers the angular component morphisms $\ac_f:\gamma\mapsto\ac f\gamma$. Thus, following \cite[\S5.1]{GLM}, we may define a convolution product in order to get a convolution formula\footnote{We may also notice that the convolution \cite[\S5.1]{GLM} is compatible with the one of \cite{Loo02} by \cite[(5.1.8)]{GLM}.}. The definition of this convolution product is motivated by the previous discussion.

\begin{notation}
We denote by $*:K_0^m\times K_0^n\rightarrow K_0^{nm}$ the unique $K_0(\AS)$-bilinear map satisfying
$$[X,\sigma,\varphi]*[Y,\tau,\psi]=-[Z_1,\mu_1,f_1]+[Z_2,\mu_2,f_2]$$
where
$$\left\{\begin{array}{l}Z_1=X\times Y\setminus(\varphi+\psi)^{-1}(0) \\ f_1=\varphi+\psi \\ \lambda\cdot_{\mu_1}(x,y)=(\lambda^n\cdot_\sigma x,\lambda^m\cdot_\tau y)\end{array}\right.$$
and
$$\left\{\begin{array}{l}Z_2=(\varphi+\psi)^{-1}(0)\times\mathbb R^* \\ f_2=\pr_{\mathbb R^*} \\ \lambda\cdot_{\mu_2}(x,y,r)=(\lambda^n\cdot_\sigma x,\lambda^m\cdot_\tau y,\lambda^{mn}r)\end{array}\right.$$
\end{notation}

\begin{rem}
The map $*:K_0^m\times K_0^n\rightarrow K_0^{nm}$ induces a $K_0(\AS)$-bilinear map $*:K_0\times K_0\rightarrow K_0$ and a $\M_\AS$-bilinear map $*:\M\times\M\rightarrow\M$.
\end{rem}

\begin{prop}
The convolution product $*$ in $K_0$ (resp. $\M$) is commutative and associative. The class $\mathbb1$ is the unit of this product.
\end{prop}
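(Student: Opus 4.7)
The plan is to verify the three properties separately. Throughout, let $A=[X,\sigma,\varphi]\in K_0^m$, $B=[Y,\tau,\psi]\in K_0^n$, $C=[Z,\rho,\chi]\in K_0^p$ be representatives.

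Commutativity is essentially symbolic: swapping $X$ and $Y$ in the definitions of $Z_1$ and $Z_2$ produces identical data up to exchange of factors, since the morphism $\varphi+\psi$ is symmetric and the two prescribed actions $\mu_1$, $\mu_2$ transform correctly under the swap. The coordinate flip is an equivariant $\AS$-bijection over $\mathbb R^*$, so relation (1) of Definition \ref{defn:K0n} gives $A*B=B*A$.

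For the unit property, I would represent $\mathbb 1$ by $\mathbb 1_1=[\id:\mathbb R^*\to\mathbb R^*]\in K_0^1$ and compute $\mathbb 1*B$ with $B\in K_0^n$. The key move is the equivariant change of variable $(a,y)\mapsto(b=a+\psi(y),y)$, which identifies $Z_1$ with the open subset $\{(b,y)\in\mathbb R^*\times Y:b\neq\psi(y)\}$ carrying the morphism $b$ and action $\lambda\cdot(b,y)=(\lambda^nb,\lambda\cdot_\tau y)$. The coordinate swap then shows that $(\mathbb R^*\times Y,\pr_{\mathbb R^*})$ with this action is equivariantly $\AS$-isomorphic to $Z_2=Y\times\mathbb R^*$, while the closed complement $\{b=\psi(y)\}$ is isomorphic to $(Y,\tau,\psi)$ via $y\mapsto(\psi(y),y)$. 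Additivity then yields $[Z_1]=[Z_2]-[Y,\tau,\psi]$, hence $\mathbb 1*B=-[Z_1]+[Z_2]=B$.

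Associativity is the main obstacle. Expanding by $K_0(\AS)$-bilinearity produces four signed classes on each side of $(A*B)*C=A*(B*C)$. I would stratify $X\times Y\times Z$ by the three conditions $\varphi+\psi=0$, $\psi+\chi=0$, and $\varphi+\psi+\chi=0$; since the morphisms land in $\mathbb R^*$, whenever two of these sums vanish the third is forced to be nonzero, so only five of the eight possible strata are non-empty. The four bulk terms on each side (coming from the $Z_1$-type pieces of each nested convolution) restrict directly to unions of these strata with morphism $\varphi+\psi+\chi$, while the four correction terms---each carrying an extra $\mathbb R^*$-factor---are brought into standard form by the equivariant change of variable $s=r+\chi$ on one side and $s=\varphi+r$ on the other. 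The crucial observation is that on the mixed strata the morphisms can be re-expressed: for instance on $\{\psi+\chi=0\}$ one has $\varphi+\psi+\chi=\varphi$, so the class with morphism $\varphi+\psi+\chi$ coincides with its class with morphism $\varphi$, and analogously for the other mixed strata. These identifications match the two expansions term by term. The main difficulty lies in the bookkeeping---tracking the weights $mn$, $mp$, $np$, $mnp$ to ensure every change of variable is equivariant for the prescribed actions, and invoking relation \ref{item:relationaffine} where needed to reconcile the extra $\mathbb R^*$-factors introduced at different stages of the convolution.
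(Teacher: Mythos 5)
Your proposal is correct, and it follows the same route that the paper gestures at: the paper disposes of this proposition in one line by citing \cite[Proposition 5.2]{GLM}, and your stratification-plus-change-of-variable argument is precisely the adaptation of that proof to the $\AS$/$\mathbb R^*$ setting. The crux of the associativity bookkeeping, which you leave implicit, is the observation that on the double-corner stratum $\{\varphi+\psi=0\}\cap\{\psi+\chi=0\}$ one has $\varphi=\chi=-\psi$, so the two residual terms coming from the $Z_2$-type pieces on each side literally coincide; once that is noted, relation \ref{item:relationaffine} is in fact not needed, since all the equivariant bijections you construct already match the actions exactly.
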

\begin{proof}
We easily adapt the proof of \cite[Proposition 5.2]{GLM}.
\end{proof}

\begin{lemma}\label{lem:massless}
Let $g:(\mathbb R^{d},0)\rightarrow(\mathbb R,0)$ be a Nash germ. Then, in $K_0(\AS)$, we have the relation $$\left[\left\{\gamma\in\mathcal L_n(\mathbb R^d,0),\,\ord_tg\gamma>n\right\}\right]=\mathbb L_{\AS}^{nd}\left(1-\displaystyle\sum_{i=1}^n[\X_i(g)]\mathbb L_{\AS}^{-id}\right)$$
\end{lemma}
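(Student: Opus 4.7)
The plan is to decompose $\mathcal L_n(\mathbb R^d,0)$ according to the value of $\ord_t g\gamma$ and compute each stratum via the truncation maps.

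First I would identify $\mathcal L_n(\mathbb R^d,0)$ with $(\mathbb R^d)^n$: an $n$-jet of an arc starting at $0$ is $\gamma(t)=\sum_{j=1}^n a_j t^j$ with $a_j\in\mathbb R^d$, so $[\mathcal L_n(\mathbb R^d,0)]=\mathbb L_{\AS}^{nd}$ in $K_0(\AS)$. Since $g(0)=0$ and $\gamma(0)=0$ we always have $\ord_t g\gamma\ge 1$, so we obtain the $\AS$-partition
\[
\mathcal L_n(\mathbb R^d,0)=\bigsqcup_{i=1}^{n}A_i\ \sqcup\ B,
\]
where $A_i=\{\gamma\in\mathcal L_n(\mathbb R^d,0):\ord_t g\gamma=i\}$ and $B=\{\gamma\in\mathcal L_n(\mathbb R^d,0):\ord_t g\gamma>n\}$. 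Both conditions are Zariski constructible (hence $\AS$) since $\ord_t g\gamma\le i$ is determined by polynomial equations/inequations in the coefficients of $\gamma$.

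Next, for each $1\le i\le n$, the condition $\ord_t g\gamma=i$ depends only on the $i$-jet $\pi^n_i(\gamma)$, and by definition this means $\pi^n_i(\gamma)\in\X_i(g)$. Hence $A_i=(\pi^n_i)^{-1}(\X_i(g))$. The truncation $\pi^n_i:\mathcal L_n(\mathbb R^d,0)\to\mathcal L_i(\mathbb R^d,0)$ is a trivial $\AS$-fibration with fiber $\mathbb R^{(n-i)d}$ (the splitting $\gamma\mapsto(\pi^n_i\gamma,(a_{i+1},\ldots,a_n))$ is an $\AS$-isomorphism of $\mathcal L_n(\mathbb R^d,0)$ with $\mathcal L_i(\mathbb R^d,0)\times\mathbb R^{(n-i)d}$). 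Therefore
\[
[A_i]=[\X_i(g)]\,\mathbb L_{\AS}^{(n-i)d}\quad\text{in }K_0(\AS).
\]

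Finally, applying additivity to the partition gives
\[
\mathbb L_{\AS}^{nd}=[\mathcal L_n(\mathbb R^d,0)]=\sum_{i=1}^{n}[\X_i(g)]\,\mathbb L_{\AS}^{(n-i)d}+[B],
\]
and factoring $\mathbb L_{\AS}^{nd}$ yields the claimed formula
\[
[B]=\mathbb L_{\AS}^{nd}\Bigl(1-\sum_{i=1}^{n}[\X_i(g)]\,\mathbb L_{\AS}^{-id}\Bigr).
\]
There is no real obstacle here; the only mild care needed is to check that the strata are legitimately $\AS$-constructible and that $\pi^n_i$ really is a trivial fibration in the sense of the $\AS$-category, but both are immediate from the coordinate description of $\mathcal L_n(\mathbb R^d,0)$.
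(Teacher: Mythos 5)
Your proof is correct and follows essentially the same route as the paper: decompose $\mathcal L_n(\mathbb R^d,0)=\bigsqcup_{i=1}^n(\pi^n_i)^{-1}(\X_i(g))\sqcup B$, use that $\pi^n_i$ is a trivial $\AS$-fibration with affine fiber to get $[(\pi^n_i)^{-1}(\X_i(g))]=[\X_i(g)]\mathbb L_{\AS}^{(n-i)d}$, and solve for $[B]$. You merely spell out the verification that the strata are $\AS$ and that the truncation is trivial, which the paper leaves implicit.
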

\begin{proof}
$\mathcal L_n(\mathbb R^d,0)=\bigsqcup_{i=1}^n(\pi^n_i)^{-1}(\X_i(g))\sqcup\left\{\gamma\in\mathcal L_n(\mathbb R^d,0),\,\ord_tg\gamma>n\right\}$. \\
So $\mathbb L_{\AS}^{nd}=\left(\displaystyle\sum_{i=1}^n[\X_i(g)]\mathbb L_{\AS}^{(n-i)d}\right)+\left[\left\{\gamma\in\mathcal L_n(\mathbb R^d,0),\,\ord_tg\gamma>n\right\}\right]$. \\
Finally, $\left[\left\{\gamma\in\mathcal L_n(\mathbb R^d,0),\,\ord_tg\gamma>n\right\}\right]=\mathbb L_{\AS}^{nd}-\displaystyle\sum_{i=1}^n[\X_i(g)]\mathbb L_{\AS}^{(n-i)d}$
\end{proof}

\begin{defn}
We define the motivic naive local zeta function by $$Z_f^{\mathrm{naive}}(T)=\sum_{n\ge1}\overline{\left[\X_n(f)\right]}\mathbb L^{-nd}T^n\in\M[[T]]$$
\end{defn}

\begin{defn}
Let $f:(\mathbb R^d,0)\rightarrow(\mathbb R,0)$ be a Nash germ. We define the modified zeta function by 
$$\tilde Z_f(T)=\sum_{n\ge1}\left[\Y_n(f)\right]\mathbb L^{-nd}T^n\in\M[[T]]$$
where
$$\left[\Y_n(f)\right]=\left[\X_n(f)\right]-\overline{\left[\gamma\in\mathcal L_n(\mathbb R^d,0),\,\ord_tf\gamma>n\right]}\cdot\mathbb1$$
\end{defn}

\begin{prop}
$\displaystyle\tilde Z_f(T)=Z_f(T)-\frac{\mathbb1-Z_f^{\mathrm{naive}}(T)}{\mathbb1-T}+\mathbb 1$
\end{prop}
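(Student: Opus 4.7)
The plan is a direct formal power series computation: starting from the definition of $\tilde Z_f(T)$, I isolate the correction term, apply Lemma \ref{lem:massless} coefficientwise, and then recognize the resulting double sum as a rational expression in $T$.

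By the very definition of $[\Y_n(f)]$,
$$\tilde Z_f(T) = Z_f(T) - \sum_{n \ge 1} \overline{\left[\gamma \in \mathcal L_n(\mathbb R^d, 0),\, \ord_t f\gamma > n\right]}\cdot \mathbb 1 \cdot \mathbb L^{-nd}T^n,$$
so the task reduces to showing that the subtracted sum equals $\frac{\mathbb 1 - Z_f^{\mathrm{naive}}(T)}{\mathbb 1 - T} - \mathbb 1$. Lemma \ref{lem:massless}, after dividing by $\mathbb L_{\AS}^{nd}$ and passing through the $K_0(\AS)$-algebra structure morphism $[A]\mapsto [A]\cdot \mathbb 1$, gives
$$\overline{\left[\gamma \in \mathcal L_n(\mathbb R^d,0),\, \ord_t f\gamma > n\right]}\cdot\mathbb 1 \cdot \mathbb L^{-nd} = \mathbb 1 - \sum_{i=1}^n \overline{[\X_i(f)]}\,\mathbb L^{-id}\cdot\mathbb 1.$$

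Multiplying by $T^n$, summing over $n \ge 1$, and swapping the order of summation in the resulting double sum, the subtracted term becomes
$$\sum_{n \ge 1} T^n - \sum_{i \ge 1} \overline{[\X_i(f)]}\,\mathbb L^{-id}\cdot\mathbb 1 \sum_{n \ge i} T^n = \frac{T}{\mathbb 1 - T} - \frac{Z_f^{\mathrm{naive}}(T)}{\mathbb 1 - T} = \frac{\mathbb 1 - Z_f^{\mathrm{naive}}(T)}{\mathbb 1 - T} - \mathbb 1,$$
which is valid in $\M[[T]]$ since $\mathbb 1 - T$ is a unit there (its constant term $\mathbb 1$ is the unit of $\M$). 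Substituting back into the expression for $\tilde Z_f(T)$ yields the proposition.

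There is no genuine obstacle: the argument is bookkeeping. The only care needed is to distinguish the forgetful class $\overline{[X]} \in K_0(\AS)$ from its image $\overline{[X]}\cdot\mathbb 1 \in \M$ under the structure morphism---this embedding is implicit in both the definitions of $\tilde Z_f(T)$ and $Z_f^{\mathrm{naive}}(T)$, and the computation above uses it systematically so that all series live in the same ring $\M[[T]]$ before the geometric series summation is performed.
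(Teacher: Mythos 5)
Your proposal is correct and follows essentially the same route as the paper: subtract off the correction term in the definition of $[\Y_n(f)]$, expand it via Lemma \ref{lem:massless}, interchange the order of summation, and sum the geometric series. The only cosmetic difference is that you make the passage through the $K_0(\AS)$-algebra structure morphism $[A]\mapsto[A]\cdot\mathbb 1$ explicit, which the paper leaves implicit; the computations are otherwise identical.
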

\begin{proof}
Lemma \ref{lem:massless} allows to rewrite $\left[\Y_n(f)\right]$ as follows
$$\left[\Y_n(f)\right] = \left[\X_n(f)\right]-\mathbb L^{nd}\left(\mathbb1-\sum_{i=1}^n\overline{\left[\X_i(f)\right]}\mathbb L^{-id}\right)$$
Then
\begin{align*}
\tilde Z_f(T)&=\sum_{n\ge1}\left[\Y_n(f)\right]\mathbb L^{-nd}T^n\\
&=\sum_{n\ge1}\left[\X_n(f)\right]\mathbb L^{-nd}T^n-\sum_{n\ge1}T^n+\sum_{n\ge1}\sum_{i=1}^n\overline{\left[\X_i(f)\right]}\mathbb L^{-id}T^n\\
&=Z_f(T)-\frac{T}{\mathbb 1-T}+\sum_{i\ge1}\sum_{n\ge i}\overline{\left[\X_i(f)\right]}\mathbb L^{-id}T^n \\
&= Z_f(T)-\frac{T-\mathbb1+\mathbb1}{\mathbb1-T}+\frac{\mathbb1}{\mathbb1-T}\sum_{i\ge1}\overline{\left[\X_i(f)\right]}\mathbb L^{-id}T^i \\
&= Z_f(T)-\frac{\mathbb1-Z_f^{\mathrm{naive}}(T)}{\mathbb1-T}+\mathbb 1
\end{align*}
\end{proof}

\begin{cor}\label{cor:limzetatilde}
$\displaystyle-\lim_{T\infty}\tilde Z_f(T)=\Sf_f-\mathbb 1$
\end{cor}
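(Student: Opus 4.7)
The plan is to start from the identity $\tilde Z_f(T) = Z_f(T) - \frac{\mathbb{1} - Z_f^{\mathrm{naive}}(T)}{\mathbb{1} - T} + \mathbb{1}$ established in the previous proposition and apply $-\lim_{T\infty}$ term by term, using the $\M$-linearity of $\lim_{T\infty}$ on $\M[[T]]_{\mathrm{sr}}$. The definition of $\Sf_f$ immediately gives $-\lim_{T\infty} Z_f(T) = \Sf_f$, so the work reduces to showing that the remaining pieces contribute exactly $-\mathbb{1}$.

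To apply $\lim_{T\infty}$ we must verify that each summand lies in $\M[[T]]_{\mathrm{sr}}$. For $Z_f(T)$ this is Theorem \ref{thm:rat}. The constant $\mathbb{1}$ is the empty product, which fits the form of the definition with $I=J=\varnothing$ and therefore has limit $\mathbb{1}$. For the middle term, I would apply the forgetful morphism coefficientwise to the rationality formula, observing that $Z_f^{\mathrm{naive}}(T)$ coincides with the image of $Z_f(T)$ under $\overline{\vphantom{1em}\ \cdot\ }:\M[[T]]\to\M_\AS[[T]]$ composed with the canonical inclusion $\M_\AS\hookrightarrow\M$ via $[A]\mapsto[A]\cdot\mathbb{1}$. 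This yields
$$Z_f^{\mathrm{naive}}(T)=\sum_{\varnothing\neq I\subset A}(\mathbb{L}-\mathbb{1})^{|I|}[\pring{E_I}\cap h^{-1}(0)]\cdot\mathbb{1}\prod_{i\in I}\frac{\mathbb{L}^{-\nu_i}T^{N_i}}{\mathbb{1}-\mathbb{L}^{-\nu_i}T^{N_i}},$$
showing in particular that $Z_f^{\mathrm{naive}}(T)\in\M[[T]]_{\mathrm{sr}}$, so $\frac{\mathbb{1}-Z_f^{\mathrm{naive}}(T)}{\mathbb{1}-T}\in\M[[T]]_{\mathrm{sr}}$ as well.

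Now I would evaluate the limit on each rational piece using the explicit formula in the preceding remark. The term $\frac{\mathbb{1}}{\mathbb{1}-T}$ has $I=\varnothing$ and $J=\{*\}$ non-empty, hence limit $0$; and $\frac{Z_f^{\mathrm{naive}}(T)}{\mathbb{1}-T}$ is, by the expression above, an $\M$-linear combination of products in which the extra factor $\frac{\mathbb{1}}{\mathbb{1}-T}$ always contributes to $J$, so each of these products has $J\neq\varnothing$ and therefore limit $0$. Assembling,
$$-\lim_{T\infty}\tilde Z_f(T)=-\lim_{T\infty}Z_f(T)+\lim_{T\infty}\frac{\mathbb{1}}{\mathbb{1}-T}-\lim_{T\infty}\frac{Z_f^{\mathrm{naive}}(T)}{\mathbb{1}-T}-\lim_{T\infty}\mathbb{1}=\Sf_f+0-0-\mathbb{1}.$$

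The only delicate point, and the one I would write out carefully, is the bookkeeping that places $Z_f^{\mathrm{naive}}(T)$ inside $\M[[T]]_{\mathrm{sr}}$ with the correct form; the vanishing of the two limits is then a direct reading of the defining formula for $\lim_{T\infty}$.
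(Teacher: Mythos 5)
Your argument is correct and is precisely what the paper leaves implicit (the corollary is stated without a written proof, immediately following the proposition $\tilde Z_f(T)=Z_f(T)-\frac{\mathbb1-Z_f^{\mathrm{naive}}(T)}{\mathbb1-T}+\mathbb 1$ and the definition $\Sf_f=-\lim_{T\infty}Z_f(T)$). You apply $-\lim_{T\infty}$ to that identity, verify membership in $\M[[T]]_{\mathrm{sr}}$ of each piece, and read off the values from the defining formula; the bookkeeping with the extra $\frac{\mathbb 1}{\mathbb 1-T}$ factor forcing $J\neq\varnothing$ (hence limit $0$) and $\lim_{T\infty}\mathbb 1=\mathbb 1$ is exactly right.
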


\begin{rem}
Applying the forgetful morphism or the morphisms $F^>,F^<$ and the Euler characteristic with compact support to the coefficients of $\tilde Z_f(T)$ we recover the modified zeta functions of S. Koike and A. Parusiński \cite{KP03}.
$$\tilde Z_f^{\chi_c,>}(T)=\sum_{n\ge1}\chi_c\left(\gamma\in\mathcal L_n(\mathbb R^d,0),\,f\gamma(t)=ct^n\mod t^{n+1},\,c\ge0\right)(-1)^{nd}T^n=\frac{1-Z_f^{\chi_c}(T)}{1-T}-1+Z_f^{\chi_c,>}(T)\in\mathbb Z[[T]]$$
$$\tilde Z_f^{\chi_c,<}(T)=\sum_{n\ge1}\chi_c\left(\gamma\in\mathcal L_n(\mathbb R^d,0),\,f\gamma(t)=ct^n\mod t^{n+1},\,c\le0\right)(-1)^{nd}T^n=\frac{1-Z_f^{\chi_c}(T)}{1-T}-1+Z_f^{\chi_c,<}(T)\in\mathbb Z[[T]]$$
\begin{align*}
\hspace{-1.6cm}\tilde Z_f^{\chi_c}(T)&=\sum_{n\ge1}\left(\chi_c\left(\gamma\in\mathcal L_n(\mathbb R^d,0),\,\ord_tf\gamma(t)=n\right)+2\chi_c\left(\gamma\in\mathcal L_n(\mathbb R^d,0),\,\ord_tf\gamma>n\right)\right)(-1)^{nd}T^n\\
&=\sum_{n\ge1}\left(\chi_c\left(\gamma\in\mathcal L_n(\mathbb R^d,0),\,\ord_tf\gamma(t)\ge n\right)+\chi_c\left(\gamma\in\mathcal L_n(\mathbb R^d,0),\,\ord_tf\gamma>n\right)\right)(-1)^{nd}T^n\\
&=\sum_{n\ge1}\Big(\chi_c\left(\gamma\in\mathcal L_n(\mathbb R^d,0),\,f\gamma(t)=ct^n\mod t^{n+1},\,c\ge0\right)\\
&\quad\quad\quad+\chi_c\left(\gamma\in\mathcal L_n(\mathbb R^d,0),\,f\gamma(t)=ct^n\mod t^{n+1},\,c\le0\right)\Big)(-1)^{nd}T^n\\
&=\tilde Z_f^{\chi_c,>}(T)+\tilde Z_f^{\chi_c,<}(T)\\
&=2\frac{1-Z_f^{\chi_c}(T)}{1-T}-2+Z_f^{\chi_c}(T)\in\mathbb Z[[T]]
\end{align*}
\end{rem}

\begin{eg}\label{eg:monomial}
Let $f_k^\varepsilon(x)=\varepsilon x^k$ where $\varepsilon\in\{\pm1\}$, then
\begin{align*}
\hspace{-1cm}\tilde Z_{f_k^\varepsilon}(T)&=-\sum_{r=1}^{k-1}T^{r}+\sum_{q\ge1}\left(([f_k^\varepsilon:\mathbb R^*\rightarrow\mathbb R^*]-\mathbb1)\mathbb L^{-q}T^{kq}-\sum_{r=1}^{k-1}\mathbb L^{-q}T^{kq+r}\right)\\
&= -\sum_{q\ge0}\sum_{r=1}^{k-1}\mathbb L^{-q}T^{kq+r}-\sum_{q\ge1}(\mathbb1-[f_k^\varepsilon:\mathbb R^*\rightarrow\mathbb R^*])\mathbb L^{-q}T^{kq} \\
&= -T-\cdots-T^{k-1}-\left(\mathbb1-[f_k^\varepsilon]\right)\mathbb L^{-1}T^k-\mathbb L^{-1}T^{k+1}-\cdots-\mathbb L^{-1}T^{2k-1}-\left(\mathbb1-[f_k^\varepsilon]\right)\mathbb L^{-2}T^{2k}-\mathbb L^{-2}T^{2k+1}-\cdots
\end{align*}
Indeed $$\left[\X_{kq+r}(f_k^\varepsilon)\right]\mathbb L^{-(kq+r)}=\left\{\begin{array}{ll}[f_k^\varepsilon:\mathbb R^*\rightarrow\mathbb R^*]\mathbb L^{-q}&\text{ if $r=0$}\\0&\text{ otherwise}\end{array}\right.$$
and $$\overline{\left[\X_{kq+r}(f_k^\varepsilon)\right]}\mathbb L^{-(kq+r)}=\left\{\begin{array}{ll}(\mathbb L-\mathbb 1)\mathbb L^{-q}&\text{ if $r=0$}\\0&\text{ otherwise}\end{array}\right.$$
Thus
\begin{align*}
\left[\Y_{kq+r}(f_k^\varepsilon)\right]\mathbb L^{-(kq+r)}&=\left\{\begin{array}{ll}
\displaystyle[f_k^\varepsilon:\mathbb R^*\rightarrow\mathbb R^*]\mathbb L^{-q}+\sum_{i=1}^q(\mathbb L-\mathbb 1)\mathbb L^{-i}-\mathbb1&\text{ if $r=0$}\\
\displaystyle\sum_{i=1}^q(\mathbb L-\mathbb 1)\mathbb L^{-i}-\mathbb1&\text{ otherwise}
\end{array}\right.\\
&=\left\{\begin{array}{ll}
\displaystyle([f_k^\varepsilon:\mathbb R^*\rightarrow\mathbb R^*]-\mathbb1)\mathbb L^{-q}&\text{ if $r=0$}\\
\displaystyle-\mathbb L^{-q}&\text{ otherwise}
\end{array}\right.
\end{align*}
\end{eg}

\begin{defn}
We define the motivic naive modified zeta function by $$\tilde Z_f^{\mathrm{naive}}(T)=\sum_{n\ge1}\overline{\left[\Y_n(f)\right]}\mathbb L^{-nd}T^n\in\M[[T]]$$
\end{defn}

\begin{prop}
$\displaystyle\frac{\mathbb L-\tilde Z_f^{\mathrm{naive}}(T)}{\mathbb L-T}=\frac{\mathbb 1-Z_f^{\mathrm{naive}}(T)}{\mathbb 1-T}$
\end{prop}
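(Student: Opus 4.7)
The plan is to apply the forgetful morphism $\overline{\,\cdot\,}$ to the defining identity for $[\Y_n(f)]$ and compare the resulting expression, summed over $n$, against $Z_f^{\mathrm{naive}}(T)$. The argument should be the naive analog of the proof of the previous proposition; indeed, I expect the cleanest intermediate form of the desired identity to be
$$\tilde Z_f^{\mathrm{naive}}(T) \;=\; Z_f^{\mathrm{naive}}(T) \;-\; (\mathbb L - \mathbb 1)\,\frac{T - Z_f^{\mathrm{naive}}(T)}{\mathbb 1 - T},$$
from which the claim will follow by elementary algebra.

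First I would apply $\overline{\,\cdot\,}$ to the equality
$$[\Y_n(f)] \;=\; [\X_n(f)] \;-\; \overline{\bigl[\gamma\in\mathcal L_n(\mathbb R^d,0),\,\ord_tf\gamma > n\bigr]}\cdot\mathbb 1,$$
using $\overline{\mathbb 1} = \mathbb L - \mathbb 1$, and invoke Lemma~\ref{lem:massless} to rewrite the class $\overline{[\gamma,\,\ord_tf\gamma > n]}$ in terms of the truncated-arc classes $\overline{[\X_i(f)]}$ for $1 \le i \le n$. Dividing by $\mathbb L^{nd}$, this yields
$$\overline{[\Y_n(f)]}\,\mathbb L^{-nd} \;=\; \overline{[\X_n(f)]}\,\mathbb L^{-nd} \;-\; (\mathbb L-\mathbb 1)\Bigl(\mathbb 1 - \sum_{i=1}^{n}\overline{[\X_i(f)]}\,\mathbb L^{-id}\Bigr).$$
Summing over $n\ge 1$ and exchanging the order of summation in the resulting double sum---exactly as in the proof of the corresponding formula for $\tilde Z_f(T)$---gives the intermediate identity displayed above.

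Finally, the claim is equivalent to
$$(\mathbb 1 - T)\bigl(\mathbb L - \tilde Z_f^{\mathrm{naive}}(T)\bigr) \;=\; (\mathbb L - T)\bigl(\mathbb 1 - Z_f^{\mathrm{naive}}(T)\bigr),$$
and substituting the intermediate identity on the left, the $T\mathbb L$ cross-terms cancel and both sides collapse to $\mathbb L - T - (\mathbb L - T)Z_f^{\mathrm{naive}}(T)$. I anticipate no real obstacle; the only delicate point is correctly tracking the factor $\mathbb L - \mathbb 1 = \overline{\mathbb 1}$ that appears because the forgetful morphism is not unital, and this factor is precisely what accounts for the appearance of $\mathbb L$ (rather than $\mathbb 1$) on the left-hand side of the proposition.
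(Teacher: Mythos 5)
Your proposal is correct and follows essentially the same route as the paper: both pass the modified-zeta formula through the forgetful morphism (so that the factor $\overline{\mathbb 1}=\mathbb L_{\AS}-1$ appears in place of $\mathbb 1$) and then rearrange algebraically. The only cosmetic difference is that you re-derive the intermediate identity directly from the definition of $[\Y_n(f)]$ together with Lemma~\ref{lem:massless}, whereas the paper obtains it by applying the forgetful morphism coefficientwise to the already-established formula $\tilde Z_f(T)=Z_f(T)-\frac{\mathbb1-Z_f^{\mathrm{naive}}(T)}{\mathbb1-T}+\mathbb 1$; both give the same intermediate form and the same elementary algebra.
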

\begin{proof}
From $$\tilde Z_f(T)=Z_f(T)-\frac{\mathbb1-Z_f^{\mathrm{naive}}(T)}{\mathbb1-T}+\mathbb 1$$ we deduce
$$\tilde Z_f^{\mathrm{naive}}(T)=Z_f^{\mathrm{naive}}(T)-(\mathbb L-\mathbb 1)\frac{\mathbb1-Z_f^{\mathrm{naive}}(T)}{\mathbb1-T}+\mathbb L-\mathbb1$$
$$\tilde Z_f^{\mathrm{naive}}(T)-\mathbb L=(Z_f^{\mathrm{naive}}(T)-\mathbb 1)+(Z_f^{\mathrm{naive}}(T)-\mathbb 1)\frac{\mathbb L-\mathbb 1}{\mathbb1-T}$$
$$\tilde Z_f^{\mathrm{naive}}(T)-\mathbb L=(Z_f^{\mathrm{naive}}(T)-\mathbb 1)\frac{\mathbb L-T}{\mathbb 1-T}$$
\end{proof}

\begin{rem}
We recover the following formula of \cite[p2070]{KP03} $$\frac{1-Z_f^{\chi_c}(T)}{1-T}=\frac{1+\tilde Z_f^{\chi_c}(T)}{1+T}$$
\end{rem}

\begin{cor}
We may compute $\tilde Z_f(T)$ from $Z_f(T)$ and $Z_f(T)$ from $\tilde Z_f(T)$. \\
More precisely, we have 
$$\tilde Z_f(T)=Z_f(T)-\frac{\mathbb1-Z_f^{\mathrm{naive}}(T)}{\mathbb1-T}+\mathbb 1$$
and
$$Z_f(T)=\tilde Z_f(T)+\frac{\mathbb L-\tilde Z_f^{\mathrm{naive}}(T)}{\mathbb L-T}-\mathbb1$$
\end{cor}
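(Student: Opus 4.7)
The first identity
$$\tilde Z_f(T)=Z_f(T)-\frac{\mathbb1-Z_f^{\mathrm{naive}}(T)}{\mathbb1-T}+\mathbb 1$$
is literally the content of the proposition stated two items before the corollary, so nothing needs to be done for it except to cite that statement. The plan is therefore to focus entirely on the second identity and obtain it by combining the preceding two propositions.

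My strategy is a one-line substitution. The proposition directly preceding the corollary gives the key identity
$$\frac{\mathbb1-Z_f^{\mathrm{naive}}(T)}{\mathbb1-T}=\frac{\mathbb L-\tilde Z_f^{\mathrm{naive}}(T)}{\mathbb L-T},$$
which lets us rewrite the ``naive correction term'' appearing in the first formula in terms of $\tilde Z_f^{\mathrm{naive}}$ instead of $Z_f^{\mathrm{naive}}$. Substituting this into the first formula of the corollary yields
$$\tilde Z_f(T)=Z_f(T)-\frac{\mathbb L-\tilde Z_f^{\mathrm{naive}}(T)}{\mathbb L-T}+\mathbb 1,$$
and solving for $Z_f(T)$ gives the desired
$$Z_f(T)=\tilde Z_f(T)+\frac{\mathbb L-\tilde Z_f^{\mathrm{naive}}(T)}{\mathbb L-T}-\mathbb1.$$

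There is no real obstacle here: both ingredients are already established in the excerpt, and the corollary is purely a bookkeeping consequence. The only point to verify is that the manipulation is legitimate in $\M[[T]]$, i.e.\ that the series $\frac{\mathbb1}{\mathbb1-T}$ and $\frac{\mathbb1}{\mathbb L-T}$ make sense as elements of $\M[[T]]$ and that subtraction of formal power series commutes with the identifications used. This is immediate since $\mathbb L$ is invertible in $\M$ and the relevant geometric series converge in the $T$-adic topology.
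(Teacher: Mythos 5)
Your proposal is correct and matches the intended argument: the paper states this as a corollary with no proof precisely because, as you observe, the first identity is a restatement of the earlier proposition and the second follows by substituting the identity $\frac{\mathbb1-Z_f^{\mathrm{naive}}(T)}{\mathbb1-T}=\frac{\mathbb L-\tilde Z_f^{\mathrm{naive}}(T)}{\mathbb L-T}$ into it and solving for $Z_f(T)$.
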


\begin{thm}\label{thm:TSzeta}
Let $f_1:(\mathbb R^{d_1},0)\rightarrow(\mathbb R,0)$ and $f_2:(\mathbb R^{d_2},0)\rightarrow(\mathbb R,0)$ be two Nash germs. Then $$\tilde Z_{f_1\oplus f_2}(T)=-\tilde Z_{f_1}(T)\circledast\tilde Z_{f_2}(T)$$ where the product $\circledast$ is the Hadamard product which consists in applying the convolution product coefficientwise.
\end{thm}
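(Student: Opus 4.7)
Since the Hadamard product $\circledast$ acts coefficientwise in $T$, it suffices to establish, for each $n\ge 1$, the coefficient identity $[\Y_n(f_1\oplus f_2)] = -[\Y_n(f_1)] * [\Y_n(f_2)]$ in $K_0$; the Lefschetz factors $\mathbb L^{-nd_1}$ and $\mathbb L^{-nd_2}$ are central and combine to $\mathbb L^{-n(d_1+d_2)}$. Writing $[\Y_n(f_i)] = [\X_n(f_i)] - [\mathcal B_n(f_i)]\cdot\mathbb 1$, where $\mathcal B_n(f_i) = \{\gamma\in\L_n(\mathbb R^{d_i},0)\,:\,\ord_t f_i\gamma > n\}$, I would expand the right-hand side by $K_0(\AS)$-bilinearity of $*$ into four convolutions, and unpack each using the definition of $*$ to produce an explicit signed combination of classes $-[Z_1]+[Z_2]$, where $Z_1$ is the locus on which the angular components do not sum to zero (with their sum as structure map) and $Z_2$ is the cancellation locus, cross-multiplied with an extra $\mathbb R^*$-factor sent to $\mathbb R^*$ by the projection.

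On the left-hand side, I would decompose $\L_n(\mathbb R^{d_1+d_2},0)$ according to $(k_1,k_2) = (\ord_t f_1\gamma_1, \ord_t f_2\gamma_2) \in \{0,\ldots,n,{>}n\}^2$: the subset $\X_n(f_1\oplus f_2)$ is the disjoint union of pieces of type $(n,n)$ with $\ac f_1 + \ac f_2 \neq 0$, $(n,{>}n)$ and $({>}n,n)$, and $(k,k)$ with $k<n$ in which the cancellation at order $k$ propagates up to order exactly $n$; symmetrically, $\mathcal B_n(f_1\oplus f_2)$ collects $(n,n)$ with cancellation, $({>}n,{>}n)$, and cascading $(k,k)$ producing order strictly greater than $n$. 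The pieces of type $(n,\bullet)$ and $(\bullet,n)$ match directly with the four convolutions. For the lower-order cascades the key tool is relation \ref{item:relationaffine}: whenever a class has underlying set of the form $Y\times\mathbb R^m$ with structure map independent of the $\mathbb R^m$-coordinates, the $\mathbb R^*$-action on $\mathbb R^m$ may be replaced by the trivial action, so that these coordinates factor out as $\mathbb L_{\AS}^m$.

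The concluding step is the algebraic matching: after systematically applying relation \ref{item:relationaffine}, the signed sum of the four convolution terms equals the decomposition of $[\X_n(f_1\oplus f_2)]-[\mathcal B_n(f_1\oplus f_2)]\cdot\mathbb 1$, with $\mathbb L_{\AS}$-factors produced on the RHS absorbing the free lower-order coefficients of the cascading-cancellation arcs on the LHS. The main obstacle is precisely this cascading case: pairs $(\gamma_1,\gamma_2)$ with $\ord_t f_i\gamma_i = k < n$ do not individually lie in $\X_n(f_i)\cup\mathcal B_n(f_i)$, so they contribute to the LHS without appearing directly in the pairwise factors on the RHS. Showing that these contributions are recovered entirely through the $\mathbb L_{\AS}$-factors produced by relation \ref{item:relationaffine}, in direct parallel with \cite[Proposition~5.2]{GLM} in the Guibert--Loeser--Merle framework, is the combinatorial heart of the argument.
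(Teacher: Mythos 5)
Your outline captures the correct skeleton of the argument: reduce to the coefficient identity $[\Y_n(f_1\oplus f_2)]=-[\Y_n(f_1)]*[\Y_n(f_2)]$, expand by bilinearity, and decompose the left side by $(\ord_t f_1\gamma_1,\ord_t f_2\gamma_2)$. The handling of the unequal-order pieces and the top-order cancellation pieces matches the paper's treatment (via Lemma~\ref{lem:massless}), and you correctly identify the cascading cancellations — pairs with $\ord_t f_i\gamma_i=\omega<n$ and $\ac f_1\gamma_1+\ac f_2\gamma_2=0$ propagating to order exactly $n$ — as the heart of the proof.

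However, there is a genuine gap in your proposed mechanism for that heart. You assert that these cascading contributions are recovered ``entirely through the $\mathbb L_{\AS}$-factors produced by relation~\ref{item:relationaffine},'' i.e.\ that the cascading locus has the form $Y\times\mathbb R^m$ with structure map independent of the $\mathbb R^m$-coordinates, to which \ref{item:relationaffine} can be applied directly. That is not available without further work. The conditions ``coefficient of $t^{\omega+l}$ of $f_1\gamma_1+f_2\gamma_2$ vanishes for $l=1,\ldots,n-\omega-1$, and does not vanish for $l=n-\omega$'' are \emph{polynomial} constraints on the Taylor coefficients of $\gamma_1,\gamma_2$; they are not a priori linear, and the locus they cut out is not manifestly a trivial affine bundle over the locus $\{\ac f_1\gamma_1+\ac f_2\gamma_2=0\}$. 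The paper's proof handles precisely this point by first choosing monomializations $h_i:M_i\to\mathbb R^{d_i}$ (Section~\ref{subsubsect:monom}), applying the change-of-variables key lemma~\ref{lem:keylemma} to pass to jets on $M_1\times M_2$, and then observing that in normal-crossings coordinates the coefficient of $t^{\omega+l}$ is a nondegenerate linear form $g_l(a_{il},b_{jl})+P_l$ in the top new coefficients. Only at that stage does the piecewise trivial fibration structure appear and produce the factor $\mathbb L_{\AS}^{(|I|+|J|-1)(n-\omega)}$ (and eventually $\mathbb L^{(d_1+d_2-1)(n-\omega)}$ after returning to $\mathbb R^{d_1+d_2}$). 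Your sketch omits the resolution and the key lemma entirely; without them the linearity needed to factor out the $\mathbb R^m$-coordinates, and hence to invoke \ref{item:relationaffine} or any $\AS$-fibration argument, is unproved. Incidentally, \cite[Proposition~5.2]{GLM} — which the paper cites only for commutativity and associativity of the convolution product — does not supply this missing step either.
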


\begin{rem}
Similar formulas are known when the angular component is fixed to be $1$ with an action of the roots of unity \cite[Main Theorem 4.2.4]{DL99-TS} or for the Euler characteristic with compact support \cite[Theorem 2.3]{KP03}.
\end{rem}

\begin{rem}
In the definition of the modified zeta function, it could have been multiplied by a factor $(-1)^d$ in order to avoid the sign in Theorem \ref{thm:TSzeta}.
\end{rem}

\begin{eg}
Let $f(x,y)=x^3-y^3$. We deduce from Example \ref{eg:monomial} and Theorem \ref{thm:TSzeta} that
$$\tilde Z_f(T) = -\sum_{q\ge0}\sum_{r=1}^2\mathbb L^{-2q}T^{3q+r} = -\frac{T+T^2}{\mathbb1-\mathbb L^{-2}T^3}$$
We recover that
\begin{align*}
Z_f(T) &\displaystyle= \tilde Z_f(T)+\frac{\mathbb L-\tilde Z_f^{\mathrm{naive}}(T)}{\mathbb L-T}-\mathbb 1 \\
&\displaystyle=-\frac{T+T^2}{\mathbb1-\mathbb L^{-2}T^3}+\frac{\mathbb L+(\mathbb L-\mathbb 1)\frac{T+T^2}{\mathbb1-\mathbb L^{-2}T^3}}{\mathbb L-T}-\mathbb 1 \\
&\displaystyle=\frac{T^3-\mathbb L^{-2}T^4}{(\mathbb L-T)(\mathbb 1-\mathbb L^{-2}T^3)} \\
&\displaystyle=\frac{\mathbb L^{-1}T^3-\mathbb L^{-3}T^4}{(\mathbb1-\mathbb L^{-1}T)(\mathbb 1-\mathbb L^{-2}T^3)} \\
&\displaystyle=\mathbb L\frac{\mathbb L^{-2}T^3}{\mathbb1-\mathbb L^{-2}T^3}+(\mathbb L-\mathbb 1)\frac{\mathbb L^{-1}T}{\mathbb1-\mathbb L^{-1}T}\frac{\mathbb L^{-2}T^3}{\mathbb1-\mathbb L^{-2}T^3}
\end{align*}
\qed
\end{eg}

\begin{lemma}[{\cite[Lemma 7.6]{Loo02}\cite[Proposition 5.1.2]{DL99-TS}}]\label{lem:limconv}
Let $Z_1(T),Z_2(T)\in\M[[T]]_{\mathrm{sr}}$ then $$\lim_{T\infty}Z_1(T)\circledast Z_2(T)=-\left(\lim_{T\infty}Z_1(T)\right)*\left(\lim_{T\infty}Z_2(T)\right)$$
\end{lemma}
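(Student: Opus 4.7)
The strategy is to mimic the proofs of \cite[Lem.~7.6]{Loo02} and \cite[Prop.~5.1.2]{DL99-TS} in our Grothendieck ring $\M$. Since $\lim_{T\infty}$ is $\M$-linear and both $*$ and $\circledast$ are $\M_\AS$-bilinear, the identity to prove is $\M_\AS$-bilinear in $(Z_1,Z_2)$. Using also that
$$(c_1 Q_1) \circledast (c_2 Q_2) = (c_1 * c_2)(Q_1 \circledast Q_2)$$
whenever $Q_1,Q_2$ are pure products of basic fractions and $c_1,c_2\in\M$ (a consequence of $\M_\AS$-bilinearity of $*$ together with $\mathbb1 * \mathbb1 = \mathbb1$), combined with $\M$-linearity of $\lim_{T\infty}$, the identity further reduces to the case where each $Z_\alpha$ is a pure product
$$Q_\alpha(T) = \prod_{i\in I_\alpha}\frac{\mathbb L^{\nu_{\alpha,i}} T^{N_{\alpha,i}}}{\mathbb 1 - \mathbb L^{\nu_{\alpha,i}} T^{N_{\alpha,i}}}\prod_{j\in J_\alpha}\frac{\mathbb1}{\mathbb1 - \mathbb L^{\mu_{\alpha,j}} T^{M_{\alpha,j}}}.$$

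Next, I would expand each $Q_\alpha$ as a power series whose $T^n$-coefficient is a sum of $\mathbb L$-monomials indexed by integer points of the rational cone $\{(k,l):k_i\ge 1,\,l_j\ge 0,\,\sum_i k_i N_{\alpha,i} + \sum_j l_j M_{\alpha,j} = n\}$, compute $Q_1\circledast Q_2$ coefficient by coefficient using $\mathbb L^a * \mathbb L^b = \mathbb L^{a+b}$, and re-assemble the result as a finite $\M_\AS$-linear combination of pure products of basic rational fractions via a standard generating-function decomposition of the intersection cone cut by the linear equation identifying the two degrees. Applying the defining formula for $\lim_{T\infty}$ then reduces the identity to a sign-count: when $J_1 = J_2 = \varnothing$, the surviving contributions sum to $(-1)^{|I_1|+|I_2|+1}\mathbb1$, which matches $-(\lim_{T\infty}Q_1)*(\lim_{T\infty}Q_2) = -(-1)^{|I_1|+|I_2|}\mathbb1$; when some $J_\alpha$ is non-empty, the contributions vanish, matching the vanishing of the right-hand side.

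The main obstacle is the combinatorial bookkeeping. When $|I_\alpha| + |J_\alpha|$ is large, the intersection cone has high dimension with many faces, and one must track which faces produce basic fractions with empty $J$-part (and therefore contribute to the limit) versus those with non-empty $J$-part (which contribute zero). This is the essential combinatorial content of the Denef--Loeser and Looijenga arguments and transports verbatim to our setting, the only subtlety being the extra $-[Z_1]+[Z_2]$ sign built into the definition of the convolution $*$.
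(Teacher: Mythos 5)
The paper does not prove this lemma; the citation bracket in the theorem heading \emph{is} the proof, delegating to \cite[Lemma 7.6]{Loo02} and \cite[Proposition 5.1.2]{DL99-TS}. Your reduction to pure products is correct: coefficients of a pure product are $\M_\AS$-multiples of $\mathbb 1$, so $\M_\AS$-bilinearity of $*$ together with $\mathbb 1 * \mathbb 1 = \mathbb 1$ does give $(c_1 Q_1)\circledast(c_2 Q_2)=(c_1*c_2)(Q_1\circledast Q_2)$, and since $\lim_{T\infty}$ is $\M$-linear and sends a pure product to $\pm\mathbb 1$ or $0$, the identity does collapse to the pure-product case. What you leave entirely to the references --- the cone decomposition establishing both that $Q_1\circledast Q_2$ lies in $\M[[T]]_{\mathrm{sr}}$ and that its limit equals $(-1)^{|I_1|+|I_2|+1}\mathbb 1$ when $J_1=J_2=\varnothing$ and $0$ otherwise --- is the entire combinatorial content of the lemma; a self-contained proof would need to execute that decomposition and track which pieces carry a nonempty $J$-part. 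Since the paper itself outsources this exact step, your proposal matches the paper's route and level of detail, but you should be clear that you have only reduced to the combinatorial claim in the cited sources, not reproved it.
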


\begin{cor}[Motivic Thom--Sebastiani formula]
$\displaystyle\Sf_{f_1\oplus f_2}=-\Sf_{f_1}*\Sf_{f_2}+\Sf_{f_1}+\Sf_{f_2}$
\end{cor}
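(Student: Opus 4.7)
The plan is to combine the convolution formula for the modified zeta function (Theorem \ref{thm:TSzeta}) with the compatibility of the Hadamard convolution with the limit at infinity (Lemma \ref{lem:limconv}), and then translate between $\tilde Z_f$ and $\Sf_f$ using Corollary \ref{cor:limzetatilde}. The computation is essentially formal: one applies $-\lim_{T\infty}$ to both sides of $\tilde Z_{f_1\oplus f_2}(T)=-\tilde Z_{f_1}(T)\circledast\tilde Z_{f_2}(T)$.

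First, by Corollary \ref{cor:limzetatilde}, the left-hand side gives $-\lim_{T\infty}\tilde Z_{f_1\oplus f_2}(T)=\Sf_{f_1\oplus f_2}-\mathbb 1$. For the right-hand side I would apply Lemma \ref{lem:limconv}, which yields
\[
\lim_{T\infty}\bigl(\tilde Z_{f_1}(T)\circledast\tilde Z_{f_2}(T)\bigr)
=-\bigl(\lim_{T\infty}\tilde Z_{f_1}(T)\bigr)*\bigl(\lim_{T\infty}\tilde Z_{f_2}(T)\bigr)
=-(\mathbb 1-\Sf_{f_1})*(\mathbb 1-\Sf_{f_2}),
\]
again using Corollary \ref{cor:limzetatilde} to rewrite each factor. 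Next, I would expand the right-hand side using bilinearity of $*$ together with the fact that $\mathbb 1$ is the unit of the convolution product (as noted after the definition of $*$):
\[
-(\mathbb 1-\Sf_{f_1})*(\mathbb 1-\Sf_{f_2})
=-\mathbb 1+\Sf_{f_1}+\Sf_{f_2}-\Sf_{f_1}*\Sf_{f_2}.
\]

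Putting everything together, $-\lim_{T\infty}\bigl(-\tilde Z_{f_1}(T)\circledast\tilde Z_{f_2}(T)\bigr)$ equals $-\mathbb 1+\Sf_{f_1}+\Sf_{f_2}-\Sf_{f_1}*\Sf_{f_2}$; comparing with $\Sf_{f_1\oplus f_2}-\mathbb 1$ on the other side, the $-\mathbb 1$ terms cancel and the announced identity $\Sf_{f_1\oplus f_2}=-\Sf_{f_1}*\Sf_{f_2}+\Sf_{f_1}+\Sf_{f_2}$ drops out. There is no real obstacle here: all the substantive content has been done in Theorem \ref{thm:TSzeta} and Lemma \ref{lem:limconv}. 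The only thing to watch carefully is the bookkeeping of signs coming from the two minus signs in $\Sf_f=-\lim_{T\infty}Z_f(T)$ and in Lemma \ref{lem:limconv}, and the fact that $\tilde Z_{f_i}(T)\in\M[[T]]_{\mathrm{sr}}$ so that the limit is actually defined; this is ensured by Theorem \ref{thm:rat} and the explicit expression of $\tilde Z_f$ in terms of $Z_f$ and $Z_f^{\mathrm{naive}}$.
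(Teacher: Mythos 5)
Your proof is correct and is essentially identical to the paper's: both apply $-\lim_{T\infty}$ to the identity of Theorem \ref{thm:TSzeta}, pass the limit through the Hadamard product via Lemma \ref{lem:limconv}, rewrite each factor with Corollary \ref{cor:limzetatilde}, and expand using $\M_\AS$-bilinearity of $*$ together with $\mathbb 1$ being the unit. Your added observation that one should check $\tilde Z_{f_i}(T)\in\M[[T]]_{\mathrm{sr}}$ so the limit is defined is a sensible point that the paper leaves implicit.
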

\begin{proof}
$$\begin{array}{rcll}
\Sf_{f_1\oplus f_2}-\mathbb 1&=&-\lim_{T\infty}\tilde Z_{f_1\oplus f_2}(T)&\text{ by Corollary \ref{cor:limzetatilde}}\\
&=&\lim_{T\infty}\left(\tilde Z_{f_1}(T)\circledast\tilde Z_{f_2}(T)\right)&\text{ by Theorem \ref{thm:TSzeta}} \\
&=&-\left(\lim_{T\infty}\tilde Z_{f_1}(T)\right)*\left(\lim_{T\infty}\tilde Z_{f_2}(T)\right)&\text{ by Lemma \ref{lem:limconv}} \\
&=&-\left(-\Sf_{f_1}+\mathbb 1\right)*\left(-\Sf_{f_2}+\mathbb 1\right)&\text{ by Corollary \ref{cor:limzetatilde}}\\
&=&-\Sf_{f_1}*\Sf_{f_2}+\Sf_{f_1}+\Sf_{f_2}-\mathbb 1&
\end{array}$$
\end{proof}

\begin{proof}[Proof of Theorem \ref{thm:TSzeta}]
In order to shorten the formulas, we set $\mathcal L(M,A)=\left\{\gamma\in\mathcal L(M),\,\gamma(0)\in A\right\}$. \\

Our goal is to compute, for $n\in\mathbb N\setminus\{0\}$, $\X_n(f_1\oplus f_2)$, so let $(\gamma_1,\gamma_2)\in\X_n(f_1\oplus f_2)$, i.e. $\gamma_i\in\mathcal L_n(\mathbb R^{d_i},0)$ such that $\ord_t(f_1\gamma_1+f_2\gamma_2)=n$. \\

We first restrict to the case $\ord_tf_1\gamma_1\neq\ord_tf_2\gamma_2$. Then either $$n=\ord_tf_1\gamma_1<\ord_tf_2\gamma_2\text{ with }\ac\left((f_1\oplus f_2)(\gamma_1,\gamma_2)\right)=\ac(f_1\gamma_1)$$ or $$\ord_tf_1\gamma_1>\ord_tf_2\gamma_2=n\text{ with }\ac\left((f_1\oplus f_2)(\gamma_1,\gamma_2)\right)=\ac(f_2\gamma_2)$$ \\
Therefore, if we set $$\X_n^{\neq}(f_1\oplus f_2)=\left\{(\gamma_1,\gamma_2)\in\mathcal L_n(\mathbb R^{d_1+d_2},0),\,\ord_tf_1\gamma_1\neq\ord_tf_2\gamma_2,\,\ord_t(f_1\gamma_1+f_2\gamma_2)=n\right\}$$ with the natural action on jets and the natural morphism induced by the angular component, we get
\begin{align*}
\left[\X_n^{\neq}(f_1\oplus f_2)\right]&=\overline{\left[\gamma_2\in\mathcal L_n(\mathbb R^{d_2},0),\,\ord_tf_2\gamma_2>n\right]}\left[\X_n(f_1)\right]+\overline{\left[\gamma_1\in\mathcal L_n(\mathbb R^{d_1},0),\,\ord_tf_1\gamma_1>n\right]}\left[\X_n(f_2)\right]\\
&=\left(1-\sum_{\omega=1}^n\overline{[\X_\omega(f_2)]}\mathbb L^{-\omega d_2}\right)[\X_n(f_1)]\mathbb L^{nd_2}+\left(1-\sum_{\omega=1}^n\overline{[\X_\omega(f_1)]}\mathbb L^{-\omega d_1}\right)[\X_n(f_2)]\mathbb L^{nd_1}
\end{align*}
where the overline means that the class is in $K_0(\AS)$ and that we use the scalar multiplication. The second equality comes from Lemma \ref{lem:massless}. \\

It remains to manage the case $\ord_tf_1\gamma_1=\ord_tf_2\gamma_2$. Set $$\X_n^=(f_1\oplus f_2)=\left\{(\gamma_1,\gamma_2)\in\mathcal L_n(\mathbb R^{d_1+d_2},0),\,\ord_tf_1\gamma_1=\ord_tf_2\gamma_2,\,\ord_t(f_1\gamma_1+f_2\gamma_2)=n\right\}$$ with the natural action on jets and the natural morphism induced by the angular component. \\
Let $(\gamma_1,\gamma_2)\in\X_n^=(f_1\oplus f_2)$. Either $\ord_t(f_1\gamma_1)=\ord_t(f_2\gamma_2)=n$ with $\ac(f_1\gamma_1)+\ac(f_2\gamma_2)\neq0$ or $\ord_t(f_1\gamma_1)=\ord_t(f_2\gamma_2)<n$ with $\ac(f_1\gamma_1)+\ac(f_2\gamma_2)=0$. We now focus on this second case. \\
Let $h_i:M_i\rightarrow \mathbb R^{d_i}$ be as in Section \ref{subsubsect:monom}. \\
We lift the jets in $\X_n^=(f_1\oplus f_2)$ to jets in $\mathcal L_m(\mathbb R^{d_1+d_2})$ (second equality below) with $m\ge n$ big enough to apply the change of variables key lemma \ref{lem:keylemma} (fourth equality) in order to work locally with jets in $M_1\times M_2$ with origin in $\pring{E_I}\times\pring{E_J}$ (fifth equality) where $f_ih_i$ is a monomial times a unit.
\begin{align*}
(*)&=\left[(\gamma_1,\gamma_2)\in\mathcal L_n(\mathbb R^{d_1+d_2},0),\,\begin{array}{l}\ord_tf_1\gamma_1=\ord_tf_2\gamma_2<n,\\\ord_t(f_1\gamma_1+f_2\gamma_2)=n,\end{array}\right]\\
&=\left[(\gamma_1,\gamma_2)\in\mathcal L_m(\mathbb R^{d_1+d_2},0),\,\begin{array}{l}\ord_tf_1\gamma_1=\ord_tf_2\gamma_2<n,\\\ord_t(f_1\gamma_1+f_2\gamma_2)=n,\end{array}\right]\mathbb L^{(d_1+d_2)(n-m)} \\
&=\sum_{\substack{e\ge1\\e'\ge1}}\left[(\gamma_1,\gamma_2)\in\mathcal L_m(\mathbb R^{d_1+d_2},0),\,\begin{array}{l}\ord_tf_1\gamma_1=\ord_tf_2\gamma_2<n,\\\ord_t(f_1\gamma_1+f_2\gamma_2)=n,\end{array}\,\begin{array}{l}\gamma_1\in{h_1}_{*m}\pi_m\Delta_e,\\\gamma_2\in{h_2}_{*m}\pi_m\Delta_{e'}\end{array}\right]\mathbb L^{(d_1+d_2)(n-m)} \\
&=\sum_{\substack{e\ge1\\e'\ge1}}\left[(\gamma_1,\gamma_2)\in\mathcal L_m(M_1\times M_2,h_1^{-1}(0)\times h_2^{-1}(0)),\,\begin{array}{l}\ord_tf_1h_1\gamma_1=\ord_tf_2h_2\gamma_2<n,\\\ord_t(f_1h_1\gamma_1+f_2h_2\gamma_2)=n,\\\gamma_1\in\pi_m\Delta_e,\,\gamma_2\in\pi_m\Delta_{e'}\end{array}\right]\mathbb L^{(d_1+d_2)(n-m)-e-e'} \\
&=\sum_{\substack{e\ge1\\e'\ge1}}\sum_{\substack{I\neq\varnothing\\J\neq\varnothing}}\left[(\gamma_1,\gamma_2)\in\mathcal L_m(M_1\times M_2),\,\begin{array}{l}\gamma_1(0)\in h_1^{-1}(0)\cap\pring{E_I},\\\gamma_2(0)\in h_2^{-1}(0)\cap\pring{F_J}, \\\ord_tf_1h_1\gamma_1=\ord_tf_2h_2\gamma_2<n,\\\ord_t(f_1h_1\gamma_1+f_2h_2\gamma_2)=n,\\\gamma_1\in\pi_m\Delta_e,\,\gamma_2\in\pi_m\Delta_{e'}\end{array}\right]\mathbb L^{(d_1+d_2)(n-m)-e-e'}
\end{align*}

In a neighborhood of $\pring{E_I}$ we have $$f_1h_1(x)=u(x)\prod_{i\in I}x_i^{N_i(f_1)}$$ and in a neighborhood of $\pring{E_J}$ we have $$f_2h_2(y)=v(y)\prod_{j\in J}y_j^{N_j(f_2)}$$ where $u,v$ are units and $E_i:x_i=0,F_j:y_j=0$. Let $(\gamma_1,\gamma_2)\in\mathcal L(M_1\times M_2)$ satisfying the conditions of the last equality. Set $\gamma_{1i}(t)=t^{k_i}a_i(t)$ with $a_i(0)\neq 0$ and $\gamma_{2j}(t)=t^{l_j}b_j(t)$ with $b_j(0)\neq 0$. \\
Denote by $\omega=ord_t(f_1\gamma_1)=\ord_t(f_2\gamma_2)$. Then the coefficient of $t^\omega$ in $f_1h_1(\gamma_1(t))+f_2h_2(\gamma_2(t))$ is $$u(\gamma_1(0))\prod_{i\in I}a_{i0}^{N_i(f_1)}+v(\gamma_2(0))\prod_{j\in J}a_{j0}^{N_j(f_2)}$$
which is just $\ac(f_1h_1\gamma_1)+\ac(f_2h_2\gamma_2)$.

The coefficient of $t^{\omega+l}$ for $l=1,\ldots,n-\omega$ is of the form $$g_l(a_{il},b_{jl})+P_l$$
where $g_l$ is a non-zero linear form in $a_{il},b_{jl},i\in I,j\in J$ and where $P_l$ is a polynomial in $a_{ik},b_{jk},i\in I,j\in J,k<l$.

Since the coefficient of $t^\omega$ is zero, it brings the equation $\ac(f_1h_1\gamma_1)+\ac(f_2h_2\gamma_2)=0$. The coefficients of $t^{\omega+l}$ must be zero for $l=1,\ldots,n-\omega-1$, that brings the factor $\left(\mathbb L_{\AS}^{|I|+|J|-1}\right)^{n-\omega-1}=\mathbb L_{\AS}^{(|I|+|J|-1)(n-\omega-1)}$. The coefficient of $t^{n}$ must be non-zero and is the one which contributes to the angular component, hence it brings the factor $\mathbb L_{\AS}^{|I|+|J|-1}\cdot\mathbb1$. We have no condition for the other coefficients of $\gamma_{1i},i\in I$ and $\gamma_{2j},j\in J$, that brings the factor $\prod_{i\in I}\mathbb L_{\AS}^{m-k_i-n+\omega}=\mathbb L_{\AS}^{(m-n+\omega)|I|-\sum_Ik_i}$ and similarly $\mathbb L_{\AS}^{(m-n+\omega)|J|-\sum_Jl_j}$. We have no condition on the components of $\gamma_1$ (resp. $\gamma_2$) not indexed by $I$ (resp. $J$). This brings the factor $\mathbb L_{\AS}^{(d_1-|I|)m}$ and $\mathbb L_{\AS}^{(d_2-|J|)m}$. \\

Next
\begin{align*}
&\left[(\gamma_1,\gamma_2)\in\mathcal L_m(M_1\times M_2),\,\begin{array}{l}\gamma_1(0)\in h_1^{-1}(0)\cap\pring{E_I},\\\gamma_2(0)\in h_2^{-1}(0)\cap\pring{F_J}, \\\ord_tf_1h_1\gamma_1=\ord_tf_2h_2\gamma_2<n,\\\ord_t(f_1h_1\gamma_1+f_2h_2\gamma_2)=n,\\\gamma_1\in\pi_m\Delta_e,\,\gamma_2\in\pi_m\Delta_{e'}\end{array}\right]\\
&=\overline{[h_1^{-1}(0)\cap\pring{E_I}]}\overline{[h_2^{-1}(0)\cap\pring{F_J}]}\overline{[f_1h_1+f_2h_2\neq0]}\mathbb L^{\omega-n+(d_1+d_2)m}\sum_{\omega=1}^{n-1}\sum_{\substack{k_i\in\mathbb N,l_j\in\mathbb N\\\sum k_i(\nu_i(f_1)-1)=e\\\sum l_j(\nu_j(f_2)-1)=e'\\\sum k_iN_i(f_1)=\sum l_jN_j(f_2)=\omega}}\mathbb L^{-\sum k_i-\sum l_j}
\end{align*}
and we may similarly check that the RHS is also equal to
$$\sum_{\omega=1}^{n-1}\overline{\left[(\gamma_1,\gamma_2)\in\mathcal L_m(M_1\times M_2),\,\begin{array}{l}(\gamma_1(0),\gamma_2(0))\in(h_1^{-1}(0)\cap\pring{E_I})\times(h_2^{-1}(0)\cap\pring{F_J}),\\\ord_tf_1h_1\gamma_1=\ord_tf_2h_2\gamma_2=\omega,\\\ac(f_1h_1\gamma_1)+\ac(f_2h_2\gamma_2)=0,\\\gamma_1\in\pi_m\Delta_e,\,\gamma_2\in\pi_m\Delta_{e'}\end{array}\right]}\mathbb L^{\omega-n}$$
We now come back to arcs on $\mathbb R^{d_1+d_2}$.
\begin{align*}
(*)&=\sum_{\substack{e\ge1\\e'\ge1}}\sum_{\substack{I\neq\varnothing\\J\neq\varnothing}}\sum_{\omega=1}^{n-1}\overline{\left[(\gamma_1,\gamma_2)\in\mathcal L_m(M_1\times M_2),\,\begin{array}{l}(\gamma_1(0),\gamma_2(0))\in(h_1^{-1}(0)\cap\pring{E_I})\times(h_2^{-1}(0)\cap\pring{F_J}),\\\ord_tf_1h_1\gamma_1=\ord_tf_2h_2\gamma_2=\omega,\\\ac(f_1h_1\gamma_1)+\ac(f_2h_2\gamma_2)=0,\\\gamma_1\in\pi_m\Delta_e,\,\gamma_2\in\pi_m\Delta_{e'}\end{array}\right]}\mathbb L^{\omega-n+(d_1+d_2)(n-m)-e-e'} \\
&=\sum_{\omega=1}^{n-1}\overline{\left[(\gamma_1,\gamma_2)\in\mathcal L_m(\mathbb R^{d_1+d_2},0),\,\begin{array}{l}\ord_tf_1\gamma_1=\ord_tf_2\gamma_2=\omega\\\ac(f_1\gamma_1)+\ac(f_2\gamma_2)=0\end{array}\right]}\mathbb L^{\omega-n+(d_1+d_2)(n-m)}\\
&=\sum_{\omega=1}^{n-1}\overline{\left[(\gamma_1,\gamma_2)\in\mathcal L_\omega(\mathbb R^{d_1+d_2},0),\,\begin{array}{l}\ord_tf_1\gamma_1=\ord_tf_2\gamma_2=\omega\\\ac(f_1\gamma_1)+\ac(f_2\gamma_2)=0\end{array}\right]}\mathbb L^{\omega-n+(d_1+d_2)(n-m)-(d_1+d_2)(\omega-m)}\\
&=\sum_{\omega=1}^{n-1}\overline{\left[(\gamma_1,\gamma_2)\in\X_\omega(f_1)\times\X_\omega(f_2),\,\ac f_1\gamma_1+\ac f_2\gamma_2=0\right]}\mathbb L^{(d_1+d_2-1)(n-\omega)}
\end{align*}
Finally we get
\begin{align*}
\left[\X_n^=(f_1\oplus f_2)\right]&=\left[(\gamma_1,\gamma_2)\in\X_n(f_1)\times\X_n(f_2),\,\ac f_1\gamma_1+\ac f_2\gamma_2\neq0\right] \\
&\quad\quad+ \sum_{\omega=1}^{n-1}\overline{\left[(\gamma_1,\gamma_2)\in\X_\omega(f_1)\times\X_\omega(f_2),\,\ac f_1\gamma_1+\ac f_2\gamma_2=0\right]}\mathbb L^{(d_1+d_2-1)(n-\omega)} \\
&=\left[(\gamma_1,\gamma_2)\in\X_n(f_1)\times\X_n(f_2),\,\ac f_1\gamma_1+\ac f_2\gamma_2\neq0\right] \\
&\quad\quad-\left[(\gamma_1,\gamma_2)\in\X_n(f_1)\times\X_n(f_2),\,\ac f_1\gamma_1+\ac f_2\gamma_2=0\right] \\
&\quad\quad+  \sum_{\omega=1}^{n}\overline{\left[(\gamma_1,\gamma_2)\in\X_\omega(f_1)\times\X_\omega(f_2),\,\ac f_1\gamma_1+\ac f_2\gamma_2=0\right]}\mathbb L^{(d_1+d_2-1)(n-\omega)} \\
&=-\left[\X_n(f_1)\right]*\left[\X_n(f_2)\right]+\sum_{\omega=1}^{n}\overline{\left[(\gamma_1,\gamma_2)\in\X_\omega(f_1)\times\X_\omega(f_2),\,\ac f_1\gamma_1+\ac f_2\gamma_2=0\right]}\mathbb L^{(d_1+d_2-1)(n-\omega)}
\end{align*}
This ends the second case. \\

Therefore the computations of the beginning of the proof give
\begin{align*}
\left[\X_n(f_1\oplus f_2)\right]\mathbb L^{-n(d_1+d_2)}&=\left(\left[\X_n^{\neq}(f_1\oplus f_2)\right]+\left[\X_n^{=}(f_1\oplus f_2)\right]\right)\mathbb L^{-n(d_1+d_2)}\\
&=\left(1-\sum_{\omega=1}^n\overline{[\X_\omega(f_2)]}\mathbb L^{-\omega d_2}\right)[\X_n(f_1)]\mathbb L^{-nd_1}\\
&+\left(1-\sum_{\omega=1}^n\overline{[\X_\omega(f_1)]}\mathbb L^{-\omega d_1}\right)[\X_n(f_2)]\mathbb L^{-nd_2}\\
&-\left[\X_n(f_1)\right]\mathbb L^{-nd_1}*\left[\X_n(f_2)\right]\mathbb L^{-nd_2}\\
&+\sum_{\omega=1}^n\overline{\left[(\gamma_1,\gamma_2)\in\X_{\omega}(f_1)\times\X_{\omega}(f_2),\,\ac(f_1\gamma_1)+\ac(f_2\gamma_2)=0\right]}\mathbb L^{-\omega(d_1+d_2)}\mathbb L^{\omega-n}
\end{align*}

Using again a monomialization, we get that
\begin{align*}
&\overline{\left[(\gamma_1,\gamma_2)\in\mathcal L_n(\mathbb R^{d_1+d_2}),\,\ord_t(f_1\gamma_1+f_2\gamma_2)>n\right]}\cdot\mathbb1\\
&\quad\quad\quad=\overline{\left[\ord_tf_1>n,\ord_tf_2>n\right]}\cdot\mathbb1+\sum_{\omega=1}^n\overline{\left[\ord_tf_1=\ord_tf_2=\omega,\,\ord_t(f_1+f_2)>n\right]}\cdot\mathbb1\\
&\quad\quad\quad=\overline{\left[\ord_tf_1>n,\ord_tf_2>n\right]}\cdot\mathbb1+\sum_{\omega=1}^n\overline{\left[(\gamma_1,\gamma_2)\in\X_{\omega}(f_1)\times\X_{\omega}(f_2),\,\ac(f_1\gamma_1)+\ac(f_2\gamma_2)=0\right]}\mathbb L^{(n-\omega)(d_1+d_2-1)}
\end{align*}

This allows us to conclude as follows.
\begin{align*}
&\left[\Y_n(f_1\oplus f_2)\right]\mathbb L^{-n(d_1+d_2)}\\
&\quad\quad=\left[\X_n(f_1\oplus f_2)\right]\mathbb L^{-n(d_1+d_2)}-\overline{\left[(\gamma_1,\gamma_2)\in\mathcal L_n(\mathbb R^{d_1+d_2}),\,\ord_t(f_1\gamma_1+f_2\gamma_2)>n\right]}\mathbb L^{-n(d_1+d_2)}\\
&\quad\quad=\left(1-\sum_{\omega=1}^n\overline{[\X_\omega(f_2)]}\mathbb L^{-\omega d_2}\right)[\X_n(f_1)]\mathbb L^{-nd_1}\\
&\quad\quad\quad\quad+\left(1-\sum_{\omega=1}^n\overline{[\X_\omega(f_1)]}\mathbb L^{-\omega d_1}\right)[\X_n(f_2)]\mathbb L^{-nd_2}\\
&\quad\quad\quad\quad-\left[\X_n(f_1)\right]\mathbb L^{-nd_1}*\left[\X_n(f_2)\right]\mathbb L^{-nd_2}\\
&\quad\quad\quad\quad-\overline{\left[\ord_tf_1>n\right]}\mathbb L^{-nd_1}\overline{\left[\ord_tf_2>n\right]}\mathbb L^{-nd_2}\\
&\quad\quad=-\left(\left[\Y_n(f_1)\right]\mathbb L^{-nd_1}\right)*\left(\left[\Y_n(f_2)\right]\mathbb L^{-nd_2}\right)
\end{align*}
\end{proof}

We recover the convolution \cite[Theorem 2.3]{KP03} thanks to the following lemma.
\begin{lemma}\label{lem:TSKP03}
Let $\varepsilon\in\{>,<\}$ then $$\chi_c(F^\varepsilon(x*y))=-\chi_c(F^\varepsilon x)\chi_c(F^\varepsilon y)$$
\end{lemma}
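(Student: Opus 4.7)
The plan is to expand $\chi_c(F^\varepsilon(x*y))$ directly from the definition of the convolution $*$, use the $\mathbb R^*$-equivariance of $\varphi$ and $\psi$ to trivialize them semialgebraically, and reduce the lemma to an elementary Euler-characteristic computation in the $(a,b)$-plane.

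Writing $x = [X,\sigma,\varphi] \in K_0^m$ and $y = [Y,\tau,\psi] \in K_0^n$, since $F^\varepsilon$ only depends on the underlying semialgebraic map to $\mathbb R^*$, the definition of $*$ gives in $K_0(SA)$
$$F^>(x*y) = -\bigl[(\varphi+\psi)^{-1}(\mathbb R_{>0})\bigr] + \bigl[(\varphi+\psi)^{-1}(0) \times \mathbb R_{>0}\bigr].$$
Applying $\chi_c$ and using $\chi_c(\mathbb R_{>0}) = -1$ collapses this to
$$\chi_c(F^>(x*y)) = -\chi_c\bigl((\varphi+\psi)^{-1}(\mathbb R_{\ge 0})\bigr),$$
with the symmetric identity (using $\mathbb R_{\le 0}$) for $\varepsilon = <$. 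Setting $X^+ = \varphi^{-1}(\mathbb R_{>0})$, $X^- = \varphi^{-1}(\mathbb R_{<0})$, and analogously $Y^\pm$, the lemma reduces to
$$\chi_c\bigl((\varphi+\psi)^{-1}(\mathbb R_{\ge 0})\bigr) = \chi_c(X^+)\chi_c(Y^+).$$

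For the trivialization, the equivariance $\varphi(\lambda\cdot x) = \lambda^m\varphi(x)$ ensures that the $\mathbb R_{>0}$-subaction on $X^+$ is free, and the semialgebraic $m$-th root yields an explicit semialgebraic homeomorphism $X^+ \simeq \mathbb R_{>0} \times F^+_X$, namely $x \mapsto (\varphi(x)^{1/m},\, \varphi(x)^{-1/m}\cdot x)$ with inverse $(\mu,f) \mapsto \mu\cdot f$, where $F^+_X = \varphi^{-1}(1)$; after reparameterizing $\tilde\mu = \mu^m$ the map $\varphi$ becomes the projection onto $\mathbb R_{>0}$. The analogous trivialization $X^- \simeq \mathbb R_{<0} \times F^-_X$ with $F^-_X = \varphi^{-1}(-1)$, and the corresponding ones for $Y^\pm$, hold; in particular $\chi_c(X^\pm) = -\chi_c(F^\pm_X)$ and similarly for $Y$.

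Decomposing $X \times Y = \bigsqcup_{\delta_1,\delta_2 \in \{+,-\}} X^{\delta_1} \times Y^{\delta_2}$ and using the trivializations, each intersection $(\varphi+\psi)^{-1}(\mathbb R_{\ge 0}) \cap (X^{\delta_1} \times Y^{\delta_2})$ is semialgebraically homeomorphic to $\{(a,b) \in \mathbb R_{\delta_1} \times \mathbb R_{\delta_2} : a+b \ge 0\} \times F^{\delta_1}_X \times F^{\delta_2}_Y$. The factor $\chi_c\bigl(\{a+b \ge 0\} \cap (\mathbb R_{\delta_1} \times \mathbb R_{\delta_2})\bigr)$ equals $1$ for $(+,+)$ (the full quadrant $\mathbb R_{>0}^2$), vanishes for $(-,-)$ (empty), and vanishes for the mixed cases: for $(+,-)$ the substitution $c = -b$ gives $\{(a,c) : 0 < c \le a\}$, which via $(a,c) \mapsto (a-c,c)$ is homeomorphic to $\mathbb R_{\ge 0} \times \mathbb R_{>0}$ of Euler characteristic $0\cdot(-1) = 0$, and $(-,+)$ is symmetric. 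Only $(+,+)$ contributes, giving
$$\chi_c\bigl((\varphi+\psi)^{-1}(\mathbb R_{\ge 0})\bigr) = \chi_c(F^+_X)\chi_c(F^+_Y) = \chi_c(X^+)\chi_c(Y^+),$$
which is the required identity. The hard part I expect is rigorously justifying the semialgebraic trivialization --- both semialgebraicity and continuity of the $m$-th-root construction must be verified on each sign-component, using the positive $m$-th root when $m$ is even; once that is in place, the remainder is bookkeeping.
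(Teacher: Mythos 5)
Your proof is correct and follows essentially the same route as the paper: reduce $\chi_c(F^>(x*y))$ to $-\chi_c\bigl((\varphi+\psi)^{-1}(\mathbb R_{\ge 0})\bigr)$, then use the $\mathbb R_{>0}$-equivariant semialgebraic trivialization $\varphi^{-1}(\mathbb R_{>0}) \simeq \mathbb R_{>0}\times\varphi^{-1}(1)$ (which is exactly the paper's Lemma~\ref{lem:semialgTF}) to kill the mixed-sign quadrants and leave only the $(+,+)$ contribution. Your version carries out the quadrant Euler-characteristic bookkeeping more explicitly where the paper appeals to $\chi_c(a+b\ge 0,\,a<0)=0$, but there is no substantive difference in method.
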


We need the following lemma in order to prove the previous one.
\begin{lemma}\label{lem:semialgTF}
Let $X$ be an $\AS$-set endowed with an $\AS$ action of $\mathbb R^*$ and $\varphi:X\rightarrow\mathbb R^*$ be an $\AS$-map such that $\varphi(\lambda\cdot x)=\lambda^n\varphi(x)$. Then $\varphi$ is a trivial semialgebraic fibration over $\mathbb R_{>0}$ and over $\mathbb R_{<0}$ (or over $\mathbb R^*$ if $n$ is odd).
\end{lemma}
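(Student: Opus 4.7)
The plan is to construct an explicit semialgebraic trivialization by using the $\mathbb R^*$-action to rescale each fibre of $\varphi$ onto a fixed reference fibre. The key observation is that for any positive real $c$ and any integer $n\ge 1$, the equation $\lambda^n=c$ has a distinguished positive solution $c^{1/n}$, and the function $c\mapsto c^{1/n}$ on $\mathbb R_{>0}$ is semialgebraic (its graph is cut out by the polynomial equation $z^n-c=0$ together with $z>0$). When $n$ is odd, $\lambda\mapsto\lambda^n$ is even a semialgebraic bijection of $\mathbb R^*$ onto itself, so the real $n$-th root extends to a semialgebraic function on all of $\mathbb R^*$.

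For the trivialization over $\mathbb R_{>0}$, I would take the reference fibre $\varphi^{-1}(1)$ and define
$$\Psi^{>}\colon \varphi^{-1}(\mathbb R_{>0})\longrightarrow \varphi^{-1}(1)\times\mathbb R_{>0},\qquad x\longmapsto\bigl(\varphi(x)^{-1/n}\cdot x,\ \varphi(x)\bigr),$$
with candidate inverse $(z,y)\mapsto y^{1/n}\cdot z$. The twisted equivariance $\varphi(\lambda\cdot x)=\lambda^n\varphi(x)$ shows that the first coordinate of $\Psi^{>}(x)$ indeed lies in $\varphi^{-1}(1)$, and a direct computation verifies that the two maps are mutually inverse. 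Both maps are semialgebraic, since the action of $\mathbb R^*$ on $X$ is semialgebraic (its graph is $\AS$, hence in particular semialgebraic), $\varphi$ is semialgebraic, and $y\mapsto y^{1/n}$ is semialgebraic on $\mathbb R_{>0}$. Compatibility with projection to $\mathbb R_{>0}$ is immediate.

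For the trivialization over $\mathbb R_{<0}$, I would use the reference fibre $\varphi^{-1}(-1)$ and the fact that for $y<0$ the equation $\lambda^n y=-1$ admits the positive solution $\lambda=(-1/y)^{1/n}$. Explicitly,
$$\Psi^{<}\colon \varphi^{-1}(\mathbb R_{<0})\longrightarrow \varphi^{-1}(-1)\times\mathbb R_{<0},\qquad x\longmapsto \bigl((-1/\varphi(x))^{1/n}\cdot x,\ \varphi(x)\bigr),$$
with inverse $(z,y)\mapsto(-y)^{1/n}\cdot z$. The same verifications go through as before. Finally, when $n$ is odd, the real $n$-th root is a semialgebraic function on all of $\mathbb R^*$, and the formula $\Psi(x)=(\varphi(x)^{-1/n}\cdot x,\varphi(x))$ glues the two previous trivializations into a single semialgebraic trivialization $\varphi^{-1}(\mathbb R^*)\to\varphi^{-1}(1)\times\mathbb R^*$ (here $1^{1/n}=1$ and $(-1)^{1/n}=-1$, so this makes sense).

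There is no real obstacle in this argument; the only subtlety is keeping the sign bookkeeping straight when $n$ is even and the target value is negative, which is why one cannot in general trivialize over all of $\mathbb R^*$ at once in that case (if $n$ is even, the image of $\lambda\mapsto\lambda^n$ is $\mathbb R_{>0}$, so the action cannot move a fibre over $\mathbb R_{>0}$ to one over $\mathbb R_{<0}$ and the two pieces must be treated separately).
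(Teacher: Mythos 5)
Your proof is correct and follows essentially the same route as the paper: the paper also defines the trivialization over $\mathbb R_{>0}$ by $\Psi(x)=(\varphi(x)^{-1/n}\cdot x,\varphi(x))$ with inverse $(x,\lambda)\mapsto\lambda^{1/n}\cdot x$, and leaves the $\mathbb R_{<0}$ and odd-$n$ cases to the reader, which you have simply spelled out explicitly. The extra details you supply (semialgebraicity of the positive $n$-th root, the sign bookkeeping for the negative piece, and why the two pieces cannot be merged when $n$ is even) are correct and consistent with what the paper's proof implicitly assumes.
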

\begin{proof}
Indeed, the following diagram commutes (for the case $>0$)
$$\xymatrix{\varphi^{-1}(\mathbb R_{>0}) \ar[rr]^{\Psi} \ar[rd]_\varphi & & \varphi^{-1}(1)\times\mathbb R_{>0} \ar[ld]^{\pr_{\mathbb R_{>0}}} \\ & \mathbb R_{>0} &}$$
where $\Psi(x)=(\varphi(x)^{-\frac{1}{n}}\cdot x,\varphi(x))$ and $\Psi^{-1}(x,\lambda)=\lambda^{\frac{1}{n}}\cdot x$
\end{proof}

\begin{proof}[Proof of Lemma \ref{lem:TSKP03}]
Assume that $\varepsilon =\,>$. Then 
\begin{align*}
\chi_c\left(F^{>}([\varphi_1:X_1\rightarrow\mathbb R^*]*[\varphi_2:X_2\rightarrow\mathbb R^*])\right)&=-\chi_c\left((\varphi_1+\varphi_2)>0\right)+\chi_c\left((\varphi_1+\varphi_2=0)\times\mathbb R_{>0}\right) \\
&=-\chi_c\left((\varphi_1+\varphi_2)>0\right)-\chi_c\left((\varphi_1+\varphi_2=0)\right)
\intertext{Where the last equality comes from the fact that $\chi_c(\mathbb R_{>0})=-1$. Thus}
\chi_c\left(F^{>}([\varphi_1:X_1\rightarrow\mathbb R^*]*[\varphi_2:X_2\rightarrow\mathbb R^*])\right)&=-\chi_c\left((\varphi_1+\varphi_2)\ge0\right) \\
&=-\chi_c\left(\varphi_1\ge0,\varphi_2\ge0\right)\\
&\quad\quad\quad+\chi_c\left((\varphi_1+\varphi_2)\ge0,\varphi_1<0\right)+\chi_c\left((\varphi_1+\varphi_2)\ge0,\varphi_2<0\right)
\intertext{Since $\varphi_i$ is trivial over $\mathbb R_{>0}$ (resp. $\mathbb R_{<0}$) and $\chi_c(a+b\ge0,a<0)=0$, we get}
\chi_c\left(F^{>}([\varphi_1:X_1\rightarrow\mathbb R^*]*[\varphi_2:X_2\rightarrow\mathbb R^*])\right)&=-\chi_c\left(\varphi_1\ge0,\varphi_2\ge0\right)\\
&=-\chi_c\left(\varphi_1>0,\varphi_2>0\right)\\
&\quad\quad\quad-\chi_c\left(\varphi_1\ge0,\varphi_2=0\right)-\chi_c\left(\varphi_1=0,\varphi_2\ge0\right)
\intertext{Since $\varphi_i$ is trivial over $\mathbb R_{>0}$ (resp. $\mathbb R_{<0}$) and $\chi_c(a\ge0,b=0)=0$, we finally have}
\chi_c\left(F^{>}([\varphi_1:X_1\rightarrow\mathbb R^*]*[\varphi_2:X_2\rightarrow\mathbb R^*])\right)&=-\chi_c\left(\varphi_1>0,\varphi_2>0\right)\\
&=-\chi_c\left(F^{>}[\varphi_1:X_1\rightarrow\mathbb R^*]\right)\chi_c\left(F^{>}[\varphi_2:X_2\rightarrow\mathbb R^*]\right)
\end{align*}
\end{proof}

\begin{cor}[{\cite[Theorem 2.3]{KP03}}]
Let $\varepsilon\in\{>,<\}$ then $$\tilde Z_{f\oplus g}^{\chi_c,\varepsilon}(T)=\tilde Z_{f}^{\chi_c,\varepsilon}(T)\odot\tilde Z_{g}^{\chi_c,\varepsilon}(T)$$ where the product $\odot$ is the Hadamard product which consists in applying the classical product of $\mathbb Z$ coefficientwise.
\end{cor}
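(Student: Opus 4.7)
The plan is to apply the morphism $\chi_c \circ F^\varepsilon$ coefficientwise to the identity of Theorem \ref{thm:TSzeta}, namely $\tilde Z_{f\oplus g}(T) = -\tilde Z_f(T) \circledast \tilde Z_g(T)$, and to check that the two minus signs that appear cancel.

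First I would unwind how the scalars transform. Since $F^\varepsilon(A/\mathbb L^{k}) = F^\varepsilon(A)/\mathbb L_{SA}^{k}$ and $\chi_c(\mathbb L_{SA}) = \chi_c(\mathbb R) = -1$, one gets $\chi_c(F^\varepsilon(A/\mathbb L^{k})) = (-1)^{k}\chi_c(F^\varepsilon(A))$. Hence for any Nash germ $h:(\mathbb R^d,0) \to (\mathbb R,0)$, the coefficient of $T^n$ in $\tilde Z_h^{\chi_c,\varepsilon}(T)$ equals $\chi_c\bigl(F^\varepsilon([\Y_n(h)]\mathbb L^{-nd})\bigr)$. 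This is the bookkeeping that matches the definition of $\tilde Z_h^{\chi_c,\varepsilon}$ with the sign $(-1)^{nd}$ absorbed.

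Next I would take the coefficient of $T^n$ in Theorem \ref{thm:TSzeta}, which reads
\[
[\Y_n(f\oplus g)]\mathbb L^{-n(d_1+d_2)} = -\bigl([\Y_n(f)]\mathbb L^{-nd_1}\bigr)*\bigl([\Y_n(g)]\mathbb L^{-nd_2}\bigr),
\]
and apply $\chi_c \circ F^\varepsilon$ to both sides. By Lemma \ref{lem:TSKP03}, $\chi_c(F^\varepsilon(x*y)) = -\chi_c(F^\varepsilon x)\chi_c(F^\varepsilon y)$, so the outer $-1$ from the theorem and the $-1$ from the lemma cancel, yielding
\[
\chi_c\bigl(F^\varepsilon([\Y_n(f\oplus g)]\mathbb L^{-n(d_1+d_2)})\bigr) = \chi_c\bigl(F^\varepsilon([\Y_n(f)]\mathbb L^{-nd_1})\bigr) \cdot \chi_c\bigl(F^\varepsilon([\Y_n(g)]\mathbb L^{-nd_2})\bigr).
\]
By the observation of the first paragraph, the left-hand side is the $n$-th coefficient of $\tilde Z_{f\oplus g}^{\chi_c,\varepsilon}(T)$ and the right-hand side is the $n$-th coefficient of $\tilde Z_f^{\chi_c,\varepsilon}(T) \odot \tilde Z_g^{\chi_c,\varepsilon}(T)$, which is exactly what is needed.

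There is no real obstacle here: the whole content has already been packaged into the two preceding statements. The only thing to be mindful of is not to confuse the two sources of signs (the one in Theorem \ref{thm:TSzeta} comes from the convention in the definition of $\tilde Z$, while the one in Lemma \ref{lem:TSKP03} comes from $\chi_c(\mathbb R_{>0}) = -1$); making sure they cancel and that the $\mathbb L^{-nd}$ bookkeeping is consistent is the entire argument.
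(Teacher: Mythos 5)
Your proof is correct and follows exactly the route the paper intends: the corollary is obtained by applying $\chi_c\circ F^\varepsilon$ coefficientwise to Theorem \ref{thm:TSzeta}, with Lemma \ref{lem:TSKP03} supplying the sign that cancels the outer minus, and the $\mathbb L^{-nd}$ bookkeeping via $\chi_c(\mathbb L_{SA})=-1$ accounting for the $(-1)^{nd}$ factor in the definition of $\tilde Z_h^{\chi_c,\varepsilon}$. The paper leaves this unwinding implicit ("we recover\ldots thanks to the following lemma"), and your write-up fills it in faithfully.
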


We showed in the proof of \ref{lem:semialgTF} that $\chi_c(F^>(a))=-\chi_c(F^+(a))$. In the same way we may prove that $\chi_c(F^<(a))=-\chi_c(F^-(a))$. From these facts we derive the following lemma.

\begin{lemma}\label{lem:TSchiFplus}
Let $\varepsilon\in\{+,-\}$ then $$\chi_c(F^\varepsilon(x*y))=\chi_c(F^\varepsilon x)\chi_c(F^\varepsilon y)$$
\end{lemma}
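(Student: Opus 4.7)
The plan is to reduce Lemma \ref{lem:TSchiFplus} directly to Lemma \ref{lem:TSKP03} by means of the pointwise sign identities $\chi_c(F^>(a))=-\chi_c(F^+(a))$ and $\chi_c(F^<(a))=-\chi_c(F^-(a))$ highlighted in the paragraph preceding the statement. Both of these identities ultimately come from Lemma \ref{lem:semialgTF}.

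First I would make the sign identities completely precise. For a generator $[\varphi_X:\mathbb R^*\acts X\rightarrow\mathbb R^*,\sigma]\in\Var_\mon^n$, Lemma \ref{lem:semialgTF} produces a trivial semialgebraic fibration $\varphi_X^{-1}(\mathbb R_{>0})\rightarrow\mathbb R_{>0}$ whose fibre is $\varphi_X^{-1}(1)=F^+([\varphi_X,\sigma])$. Multiplicativity of the Euler characteristic with compact support under trivial fibrations together with $\chi_c(\mathbb R_{>0})=-1$ yields $\chi_c(F^>([\varphi_X,\sigma]))=-\chi_c(F^+([\varphi_X,\sigma]))$, and the analogous argument over $\mathbb R_{<0}$ gives $\chi_c(F^<([\varphi_X,\sigma]))=-\chi_c(F^-([\varphi_X,\sigma]))$. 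Both $\chi_c\circ F^>$ and $\chi_c\circ F^+$ factor through $K_0$ as additive maps, so by additivity the identities extend to arbitrary $a\in K_0$.

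Next, for $\varepsilon=+$, I would simply assemble the chain
\begin{align*}
\chi_c(F^+(x*y)) &= -\chi_c(F^>(x*y)) \\
&= \chi_c(F^>(x))\,\chi_c(F^>(y)) \\
&= \bigl(-\chi_c(F^+(x))\bigr)\bigl(-\chi_c(F^+(y))\bigr) \\
&= \chi_c(F^+(x))\,\chi_c(F^+(y)),
\end{align*}
where the first and third equalities use the sign identity and the second equality invokes Lemma \ref{lem:TSKP03} applied to $\varepsilon=\,>$. The case $\varepsilon=-$ is entirely parallel, using the sign identity for $F^<$ versus $F^-$ and Lemma \ref{lem:TSKP03} for $\varepsilon=\,<$.

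There is no substantive obstacle here: once the sign identities are checked on generators and extended by additivity, the lemma is a one-line bookkeeping reconciliation of the fact that $F^>$ and $F^+$ differ on $\chi_c$ by a factor of $-1$, which both cancels the $-1$ on the right-hand side of Lemma \ref{lem:TSKP03} and accounts for the sign flip on each factor of the product.
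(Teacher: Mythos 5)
Your proposal is correct and takes exactly the approach the paper intends: it spells out the two-sided sign identities $\chi_c(F^>(a))=-\chi_c(F^+(a))$ and $\chi_c(F^<(a))=-\chi_c(F^-(a))$ coming from Lemma \ref{lem:semialgTF}, then substitutes them into Lemma \ref{lem:TSKP03}. The paper merely states these identities before the lemma and declares the conclusion to follow; your chain of equalities is precisely the implicit derivation.
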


\begin{cor}
Let $\varepsilon\in\{+,-\}$ then $$\tilde Z_{f\oplus g}^{\chi_c,\varepsilon}(T)=-\tilde Z_{f}^{\chi_c,\varepsilon}(T)\odot\tilde Z_{g}^{\chi_c,\varepsilon}(T)$$ where the product $\odot$ is the Hadamard product which consists in applying the classical product of $\mathbb Z$ coefficientwise.
\end{cor}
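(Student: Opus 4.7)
The plan is to derive this corollary by transporting Theorem \ref{thm:TSzeta} through the composite $\chi_c\circ F^\varepsilon$, in complete parallel with the proof sketched for the analogous statement with $\varepsilon\in\{>,<\}$ earlier, the only difference being which multiplicativity lemma is invoked.

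First, I would unfold the definitions so that the corollary becomes a coefficientwise identity. By definition, for each $n\ge 1$ the coefficient of $T^n$ in $\tilde Z_f^{\chi_c,\varepsilon}(T)$ is obtained from the coefficient $\left[\Y_n(f)\right]\mathbb L^{-nd}\in\M$ of $\tilde Z_f(T)$ by applying the morphism $F^\varepsilon:\M\rightarrow \M_\AS$ followed by $\chi_c:\M_\AS\rightarrow\mathbb Z$ (recall that $\chi_c$ is a ring morphism, so $F^\varepsilon$ being a $\M_\AS$-algebra morphism ensures this is well-defined and matches the conventions used above for $Z_f^{\beta,\varepsilon}$). In particular, the $\mathbb L^{-nd}$ factors yield the appropriate powers of $-1$, but this does not affect the argument below.

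Next, I would read off Theorem \ref{thm:TSzeta} at the level of the $n$-th coefficient. Writing $d=d_1+d_2$, the identity $\tilde Z_{f\oplus g}(T)=-\tilde Z_f(T)\circledast\tilde Z_g(T)$ is, by the definition of the Hadamard product $\circledast$, exactly the family of equalities
\begin{equation*}
\left[\Y_n(f\oplus g)\right]\mathbb L^{-nd}=-\bigl(\left[\Y_n(f)\right]\mathbb L^{-nd_1}\bigr)*\bigl(\left[\Y_n(g)\right]\mathbb L^{-nd_2}\bigr)\quad\text{in }\M,
\end{equation*}
for each $n\ge 1$. Applying $\chi_c\circ F^\varepsilon$ to both sides, the key input is Lemma \ref{lem:TSchiFplus}, which states precisely that $\chi_c(F^\varepsilon(x*y))=\chi_c(F^\varepsilon x)\chi_c(F^\varepsilon y)$ for $\varepsilon\in\{+,-\}$; this converts the convolution product $*$ on the right-hand side into the ordinary product in $\mathbb Z$, while the linear map $\chi_c\circ F^\varepsilon$ preserves the global minus sign.

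Reassembling coefficientwise, the left-hand side becomes the coefficient of $T^n$ in $\tilde Z_{f\oplus g}^{\chi_c,\varepsilon}(T)$, and the right-hand side is $-1$ times the product of the $n$-th coefficients of $\tilde Z_f^{\chi_c,\varepsilon}(T)$ and $\tilde Z_g^{\chi_c,\varepsilon}(T)$, which is by definition the $n$-th coefficient of $-\tilde Z_f^{\chi_c,\varepsilon}(T)\odot\tilde Z_g^{\chi_c,\varepsilon}(T)$. This yields the claimed equality. There is no genuine obstacle: the entire content lies in Theorem \ref{thm:TSzeta} and Lemma \ref{lem:TSchiFplus}, and the only thing to watch is that the sign does not flip (contrary to the $\{>,<\}$ case, where Lemma \ref{lem:TSKP03} introduces an extra $-1$ since $\chi_c(F^>\mathbb 1)=\chi_c(\mathbb R_{>0})=-1$, whereas here $\chi_c(F^+\mathbb 1)=\chi_c(\{1\})=+1$).
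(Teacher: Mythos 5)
Your proof is correct and follows exactly the (implicit) route the paper intends: the corollary is a direct consequence of Theorem \ref{thm:TSzeta} and Lemma \ref{lem:TSchiFplus}, obtained by applying $\chi_c\circ F^\varepsilon$ to the coefficientwise identity $\left[\Y_n(f\oplus g)\right]\mathbb L^{-n(d_1+d_2)}=-\left(\left[\Y_n(f)\right]\mathbb L^{-nd_1}\right)*\left(\left[\Y_n(g)\right]\mathbb L^{-nd_2}\right)$, and your sign-bookkeeping (including the observation that the global minus sign survives here, in contrast to the $\{>,<\}$ case where Lemma \ref{lem:TSKP03} contributes an extra $-1$) is exactly the point of the statement.
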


\section{Arc-analytic equivalence}\label{sect:bne}
T.-C. Kuo \cite{Kuo85} defined the notion of blow-analytic equivalence for real analytic function germs: two germs are blow-analytically equivalent if we can get one from the other by composing with a homeomorphism which is blow-analytic and such that the inverse is also blow-analytic. In order to prove that this is an equivalence relation, he gave a characterization in terms of real modifications \cite[Proposition 2]{Kuo85}. Similarly, G. Fichou \cite[Definition 4.1]{Fic05} defined the notion of blow-Nash equivalence for Nash function germs, which is an algebraic version of the blow-analytic equivalence of T.-C. Kuo. G. Fichou proved that his zeta functions are invariants of this relation. However it is not clear that this notion is an equivalence relation.

The definition of blow-Nash equivalence evolved, and we now use the following one \cite{Fic06,Fic08,FF}: two Nash function germs are blow-Nash equivalent if they are equivalent via a blow-Nash isomorphism in the sense of \cite[Definition 1.1]{Fic05-bis}. The assumption ``via a blow-Nash isomorphism'' is needed to ensure that the zeta functions of G. Fichou are invariants of this relation, but with this assumption, it is still not clear whether it is an equivalence relation.

In this section, we introduce a new relation on Nash function germs, the arc-analytic equivalence which is an equivalence relation. Moreover it coincides with the current version of the blow-Nash equivalence. To introduce it, we do not need Nash modifications. Finally, our zeta function is an invariant of this relation.

\begin{defn}
A semialgebraic function $f:V\rightarrow\mathbb R$ defined on an algebraic set $V$ is said to be blow-Nash if there exists $\sigma:\tilde V\rightarrow V$ a finite sequence of (algebraic) blowings-up with non-singular centers such that $\tilde V$ is non-singular and $f\circ\sigma:\tilde V\rightarrow\mathbb R$ is Nash.
\end{defn}

\begin{defn}
A real analytic arc on $\mathbb R^d$ is a real analytic map $\gamma:(-\varepsilon,\varepsilon)\rightarrow\mathbb R^d$.
\end{defn}

We recall the following useful result of Bierstone-Milman.
\begin{thm}[{\cite[Theorem 1.1]{BM90}}]
A semialgebraic function $f:U\rightarrow\mathbb R$ defined on a non-singular real algebraic set is blow-Nash if and only if it sends real analytic arcs to real analytic arcs by composition. 
\end{thm}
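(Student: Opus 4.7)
The plan is to prove the two implications separately: the forward direction is a lifting argument, while the backward direction is the substantive one, requiring resolution of singularities.

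For the forward direction (blow-Nash $\Rightarrow$ arc-analytic), let $\sigma:\tilde V\to V$ be a finite sequence of blowings-up with non-singular centers such that $f\circ\sigma$ is Nash. Given a real analytic arc $\gamma:(-\varepsilon,\varepsilon)\to U$, I would construct an analytic lift $\tilde\gamma:(-\varepsilon',\varepsilon')\to\tilde V$ satisfying $\sigma\circ\tilde\gamma=\gamma$, by induction on the number of blowings-up in $\sigma$. The crux is the single blow-up case: in local coordinates where the center is $\{x_1=\cdots=x_r=0\}$ and $\gamma(t)=(\gamma_1(t),\ldots,\gamma_n(t))$ has $\gamma_i(0)=0$ for $i\le r$, the minimal $t$-adic order among $\gamma_1,\ldots,\gamma_r$ selects an affine chart of the blow-up in which factoring out that common power of $t$ from the relevant coordinates yields an explicit analytic lift. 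Iterating produces $\tilde\gamma$ analytic; then $f\circ\gamma=(f\circ\sigma)\circ\tilde\gamma$ is analytic as a composition of a Nash function with an analytic arc.

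For the backward direction (arc-analytic semialgebraic $\Rightarrow$ blow-Nash), I would introduce the non-Nash locus $S\subset U$, the closed semialgebraic set of points around which $f$ fails to be real analytic, and observe that $\dim S<\dim U$ (by Tarski--Seidenberg and the fact that on a generic open dense subset a semialgebraic function of class $C^\infty$ is Nash). Applying Hironaka's embedded resolution of singularities to $S$ in the ambient $V$, one obtains a composition of blowings-up with non-singular centers $\sigma:\tilde V\to V$ such that $\sigma^{-1}(S)$ is a simple normal crossing divisor. Outside $\sigma^{-1}(S)$, the composition $f\circ\sigma$ is automatically Nash, since $\sigma$ is there an isomorphism and $f$ is Nash on $U\setminus S$. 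The task reduces to extending Nash-ness of $f\circ\sigma$ across $\sigma^{-1}(S)$. Arc-analyticity is preserved under pullback by the forward direction applied in reverse: any analytic arc $\tilde\gamma$ in $\tilde V$ projects to an analytic arc $\gamma=\sigma\circ\tilde\gamma$ in $V$, and $f\circ\gamma=(f\circ\sigma)\circ\tilde\gamma$ is analytic by hypothesis.

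The main obstacle is the final step: deducing analyticity of $f\circ\sigma$ along the normal crossing divisor $\sigma^{-1}(S)$ from arc-analyticity plus semialgebraicity. Arc-analyticity alone is strictly weaker than analyticity in general (Osgood-type counterexamples exist without semialgebraicity). The strategy is to work locally at a normal crossing point where $\sigma^{-1}(S)=\{y_1\cdots y_k=0\}$, test $f\circ\sigma$ against all monomial parametrizations $t\mapsto(t^{a_1},\ldots,t^{a_d})$, and exploit the fact that arc-analyticity forces the formal expansion of $f\circ\sigma$ in each such direction to converge. Combined with semialgebraic Artin-type approximation and the tameness of semialgebraic germs, this should force $f\circ\sigma$ to coincide with its formal power series expansion and hence to be real analytic, and therefore Nash since it remains semialgebraic. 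This is where the Bierstone--Milman machinery does the heavy lifting, and where any honest proof must leave the realm of routine arguments.
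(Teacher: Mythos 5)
The paper does not prove this theorem; it is quoted from \cite[Theorem 1.1]{BM90} and used as a black box, so your proposal is being weighed against the Bierstone--Milman argument rather than against anything in the paper.

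Your forward direction is correct and essentially standard: analytic arcs lift through a finite sequence of blowings-up with smooth centers (pick the chart indexed by the coordinate of minimal $t$-adic order and divide the remaining center coordinates by it), and composing the Nash function $f\circ\sigma$ with the analytic lift gives an analytic arc.

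Your backward direction has a genuine gap, at exactly the spot you flag. The crucial missing ingredient is the algebraic equation $P(x,f(x))=0$ satisfied by $f$ (available because $f$ is semialgebraic), which you never invoke. Bierstone and Milman do not resolve the non-Nash locus of $f$; they resolve the discriminant of $P$ with respect to the dependent variable (together with the relevant coefficients) to a normal crossing divisor. Once the discriminant of $P\circ\sigma$ is normal crossings, an Abhyankar--Jung/Puiseux theorem guarantees that the roots of $P\circ\sigma$ --- of which $f\circ\sigma$ is one on each component away from the exceptional divisor --- are locally fractional power series in the normal crossing coordinates. It is only at this stage that arc-analyticity, tested on monomial arcs $t\mapsto(t^{a_1},\dots,t^{a_d})$ exactly as you suggest, does its work: it forces all exponents to be integers, so that $f\circ\sigma$ is a genuine convergent power series, hence analytic, hence Nash since it is also semialgebraic. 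Resolving only the non-Nash locus $S$ (which is moreover merely semialgebraic, so one would have to pass to its Zariski closure to apply Hironaka) gives no a priori Puiseux-type expansion of $f\circ\sigma$ to which arc-analyticity could be applied; testing against monomial arcs then yields no structural conclusion, and the appeal to ``semialgebraic Artin approximation and tameness'' cannot by itself manufacture the required normal form. The remedy is to take the polynomial equation for $f$ as the starting point and resolve its discriminant, not the non-Nash locus.
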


\begin{defn}
A map that sends real analytic arcs to real analytic arcs by composition is called \emph{arc-analytic}.
\end{defn}

\begin{defn}\label{defn:blowNash}
Two germs $f,g:(\mathbb R^d,0)\rightarrow(\mathbb R^d,0)$ are said to be arc-analytic equivalent if there exists a semialgebraic homeomorphism $h:(\mathbb R^d,0)\rightarrow(\mathbb R^d,0)$ satisfying $f=g\circ h$ such that $h$ is arc-analytic and such that there exists $c>0$ with $|\det\d h|>c$ where $\d h$ is defined.
\end{defn}

\begin{rem}
Let $f,g:(\mathbb R^d,0)\rightarrow(\mathbb R^d,0)$ and $h:(\mathbb R^d,0)\rightarrow(\mathbb R^d,0)$ be as in Definition \ref{defn:blowNash}. Then there exists $\varphi:M\rightarrow\mathbb R^d$ a finite sequence of blowings-up with non-singular centers such that $\tilde\varphi=h\circ\varphi$ is Nash:
$$\xymatrix{
& M \ar[dl]_{\varphi} \ar[dr]^{\tilde\varphi} & \\
\mathbb R^d \ar[rr]^h \ar[dr]_f & & \mathbb R^d \ar[dl]^g \\
& \mathbb R &
}$$
Notice that then $\tilde\varphi$ is proper generically one-to-one Nash. Moreover, by \cite[Corollary 4.17]{jbc1}, $\tilde\varphi$ induces a bijection between arcs on $M$ and arcs on $\mathbb R^d$ not entirely included in some nowhere dense subset of $\mathbb R^d$.
\end{rem}

\begin{prop}
Arc-analytic equivalence is an equivalence relation.
\end{prop}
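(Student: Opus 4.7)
The plan is to verify the three axioms of an equivalence relation for arc-analytic equivalence: reflexivity, transitivity, and symmetry. Reflexivity is immediate: the identity $\id_{\mathbb R^d}$ is a semialgebraic homeomorphism, trivially arc-analytic, and satisfies $|\det\d\,\id|=1$. For transitivity, suppose $f=g\circ h_1$ and $g=k\circ h_2$ with $h_1,h_2$ witnessing the equivalences. Then $f=k\circ(h_2\circ h_1)$, and I would check that $h_2\circ h_1$ is a semialgebraic homeomorphism (composition of such), is arc-analytic (the composite of two arc-analytic maps is arc-analytic, directly from the definition), and satisfies $|\det\d(h_2\circ h_1)|>c_1c_2$ on the open dense set where both differentials exist, by the chain rule. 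Since the complement of this set is semialgebraic of strictly lower dimension, the bound propagates (up to shrinking the constant) to the full domain of definition of $\d(h_2\circ h_1)$.

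The substantive step is symmetry. Suppose $f=g\circ h$ with $h$ a witness; I need $g=f\circ h^{-1}$ with $h^{-1}$ also a witness. That $h^{-1}$ is a semialgebraic homeomorphism is immediate, so the main point is arc-analyticity of $h^{-1}$. Since $h$ is semialgebraic and arc-analytic, by the Bierstone--Milman theorem cited above $h$ is blow-Nash: there is a finite sequence of blowings-up $\sigma:M\to(\mathbb R^d,0)$ with $\tilde h:=h\circ\sigma$ Nash. Because $h$ is a homeomorphism and $\sigma$ is proper generically one-to-one, so is $\tilde h$. Given an analytic arc $\delta$ into the target of $h$ not entirely contained in the fixed nowhere dense subset singled out in the remark preceding the proposition, the arc-lifting property of \cite[Corollary 4.17]{jbc1} produces a unique analytic arc $\tilde\delta$ on $M$ with $\tilde h\circ\tilde\delta=\delta$; then
\[
h^{-1}\circ\delta\;=\;\sigma\circ\tilde\delta,
\]
which is analytic since $\sigma$ is Nash. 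The arcs lying in the exceptional subset can be treated by a density/continuity argument, using the continuity of $h^{-1}$ and the fact that analytic arcs avoiding the thin set are dense.

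For the Jacobian condition on $h^{-1}$, the inverse function theorem gives $|\det\d(h^{-1})(y)|=1/|\det\d h(h^{-1}(y))|$ wherever both differentials exist. The subtle point, which I expect to be the main technical obstacle, is that a lower bound on $|\det\d(h^{-1})|$ requires an upper bound on $|\det\d h|$, and this is not automatic from the stated hypothesis. To extract it, I would exploit the factorization $\det\d\tilde h=(\det\d h\circ\sigma)\cdot\det\d\sigma$ valid where $\sigma$ is a local diffeomorphism, together with the boundedness of $\det\d\tilde h$ on a compact neighborhood of $\sigma^{-1}(0)$ (since $\tilde h$ is Nash) and the normal-crossings structure of $\det\d\sigma$; combined with the hypothesis $|\det\d h|>c$, this forces $|\det\d h|$ to be bounded above, giving the required lower bound for $|\det\d(h^{-1})|$ and completing the proof of symmetry.
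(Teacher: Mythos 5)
Your overall strategy coincides with the paper's: reflexivity and transitivity are checked directly (your transitivity argument, via the chain rule on the open dense set of differentiability, is exactly what the paper does), and symmetry is the nontrivial point. But whereas you try to prove symmetry from scratch, the paper does not: it simply cites \cite[Theorem~3.5]{jbc1}, the arc-analytic inverse mapping theorem, which is precisely the statement you are reconstructing. Your reconstruction identifies the right ingredients, but it leaves two genuine gaps, both of which are the substance of that cited theorem.

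First, the arc-lifting argument via \cite[Corollary~4.17]{jbc1} only produces $h^{-1}\circ\delta$ for analytic arcs $\delta$ whose image is not entirely contained in the exceptional nowhere-dense set. You propose to handle the remaining arcs ``by a density/continuity argument,'' but this does not work as stated: analyticity of a curve is not inherited from approximating analytic curves, and continuity of $h^{-1}$ yields only continuity of $h^{-1}\circ\delta$. One must instead run an induction on the dimension of the thin stratum, or exploit the Jacobian bound in an essential way; a soft density argument cannot give analyticity.

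Second, the Jacobian upper bound you assert does not follow from the ingredients you list. From $\det\d\tilde h=(\det\d h\circ\sigma)\cdot\det\d\sigma$ and boundedness of $\det\d\tilde h$ on a compact neighborhood you only get $|\det\d h\circ\sigma|\le M/|\det\d\sigma|$, and the right-hand side blows up along the exceptional divisor where $\det\d\sigma$ vanishes to positive order. The hypothesis $|\det\d h|>c$ gives a lower bound, not an upper one, and the normal-crossings structure by itself does not ``force'' anything. What is really required is the comparison $\ord_{E_i}\det\d\tilde h\ge\ord_{E_i}\det\d\sigma$ along each exceptional component, so that the monomial vanishing of $\det\d\sigma$ is absorbed into $\det\d\tilde h$; establishing this order comparison is a genuine step, and it is exactly what the inverse mapping theorem of \cite{jbc1} supplies. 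So your proposal has the right skeleton, and reflexivity and transitivity are fine, but the two key estimates that make symmetry go through are asserted rather than proved.
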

\begin{proof}
The reflexivity is obvious, the symmetry comes from \cite[Theorem 3.5]{jbc1}. Thus it suffices to prove the transitivity. We have the following diagram $$\xymatrix{\mathbb R^d \ar[rr]^{h_1} \ar@/_1pc/[rrd]_{f_1} & & \mathbb R^d \ar[rr]^{h_2} \ar[d]_{f_2} & & \mathbb R^d \ar@/^1pc/[lld]^{f_3} \\ &&\mathbb R&&}$$ where the $f_i$ are Nash germs and the $h_i$ are as in Definition \ref{defn:blowNash}. Obviously there exists $c>0$ such that $|\det\d(h_2\circ h_1)|>c$. The composition $h_2\circ h_1$ is obviously semialgebraic and arc-analytic (as the composition of such maps).
\end{proof}

\begin{prop}\label{prop:BNFic1}
Two Nash germs which are blow-Nash equivalent in the sense of \cite[Definition 4.1]{Fic05} are arc-analytic equivalent.
\end{prop}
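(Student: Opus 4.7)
The plan is to unwind the data of a blow-Nash equivalence in Fichou's sense \cite[Definition 4.1]{Fic05} and verify the conditions of Definition \ref{defn:blowNash}. That definition supplies a semialgebraic homeomorphism $h:(\mathbb R^d,0)\to(\mathbb R^d,0)$ with $f=g\circ h$, two finite sequences of algebraic blowings-up with non-singular centers $\sigma:M\to\mathbb R^d$ and $\sigma':M'\to\mathbb R^d$, and a Nash isomorphism $\Phi:M\to M'$ satisfying $h\circ\sigma=\sigma'\circ\Phi$. Semialgebraicity of $h$ and the identity $f=g\circ h$ are built into this data, so only arc-analyticity of $h$ and the lower bound on $|\det\d h|$ remain to be checked.

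Arc-analyticity is the straightforward half. Given a real analytic arc $\gamma:(-\varepsilon,\varepsilon)\to\mathbb R^d$, I would invoke the standard arc-lifting property for proper generically one-to-one Nash maps (already used in the remark following Definition \ref{defn:blowNash}, cf.\ \cite[Corollary 4.17]{jbc1}) to produce a real analytic lift $\tilde\gamma:(-\varepsilon,\varepsilon)\to M$ with $\sigma\circ\tilde\gamma=\gamma$. Then $h\circ\gamma=\sigma'\circ\Phi\circ\tilde\gamma$ is a composition of Nash maps with a real analytic arc, hence is itself real analytic.

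For the Jacobian bound, differentiating the relation $h\circ\sigma=\sigma'\circ\Phi$ and taking absolute values of determinants yields, at every $p\in M$ outside the exceptional divisor $E$ of $\sigma$,
\[
|\det\d h|_{\sigma(p)}=\frac{|\det\d\sigma'|_{\Phi(p)}\,|\det\d\Phi|_p}{|\det\d\sigma|_p}.
\]
Since $\Phi$ is a Nash isomorphism between non-singular compact neighborhoods, $|\det\d\Phi|$ is a positive Nash function bounded below by a positive constant on a compact semialgebraic neighborhood of $\sigma^{-1}(0)$. The remaining ratio $|\det\d\sigma'|_{\Phi}/|\det\d\sigma|$ should be controlled using the normal-crossings structure: in local coordinates adapted to $E$ one has $\det\d\sigma=u\prod x_i^{\nu_i-1}$ with $u$ a unit, and similarly for $\det\d\sigma'$ in adapted coordinates on $M'$, while $\Phi$ being a Nash isomorphism forces it to identify $E$ with the exceptional divisor of $\sigma'$ irreducible component by irreducible component, and the corresponding vanishing orders to coincide. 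Granting this matching, the singular factors cancel and $|\det\d h|\circ\sigma$ extends to a nowhere-vanishing Nash function on a compact neighborhood of $\sigma^{-1}(0)$, producing the desired constant $c>0$.

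The principal technical obstacle is justifying this matching of vanishing orders of $\det\d\sigma$ and $\det\d\sigma'\circ\Phi$ along corresponding components. I would address it by characterising the exceptional divisor on each side as the locus where the modification fails to be a local Nash diffeomorphism, observing that $\Phi$ necessarily preserves this locus (because $h$ is a homeomorphism identifying $\mathbb R^d\setminus\sigma(E)$ with $\mathbb R^d\setminus\sigma'(E')$), and then carrying out the computation in compatible adapted coordinate systems in which both Jacobians are monomial-times-unit, so that the cancellation of the exponents becomes a direct calculation from the equality $\det\d\sigma'\circ\Phi\cdot\det\d\Phi=\det\d h\circ\sigma\cdot\det\d\sigma$ combined with $\det\d\Phi$ being a unit.
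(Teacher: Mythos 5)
Your arc-analyticity argument is essentially what the paper does: lift the arc through $\sigma$ (citing the lifting property for proper generically one-to-one Nash maps), then observe that $h\circ\gamma=\sigma'\circ\Phi\circ\tilde\gamma$ is a composition of Nash maps with a real analytic arc. That part is fine.

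The gap is in the last paragraph, where you try to \emph{derive} the matching of the vanishing orders of $\det\d\sigma$ and $\det\d\sigma'\circ\Phi$ along corresponding exceptional components from the mere facts that $\Phi$ is a Nash isomorphism and $h$ is a homeomorphism. That derivation does not go through, and in fact the claim you want is false under those hypotheses alone. Knowing that $\Phi$ carries an exceptional component $E_i$ of $\sigma$ isomorphically onto an exceptional component $E'_i$ of $\sigma'$ places no constraint whatsoever on the integers $\ord_{E_i}\det\d\sigma$ and $\ord_{E'_i}\det\d\sigma'$: these record how deep in the respective towers of blow-ups each component was created, and two different resolutions of germs that are only topologically equivalent can have arbitrarily different such orders. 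Your chain-rule identity $\det\d\sigma'\circ\Phi\cdot\det\d\Phi=\det\d h\circ\sigma\cdot\det\d\sigma$ cannot close the loop either, because it is circular: extracting the equality of exponents from it requires already knowing that $\det\d h$ is a unit, which is the very thing you are trying to prove.

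What resolves this is that Fichou's Definition 4.1 \emph{explicitly includes} the condition that the Nash isomorphism $\Phi$ preserve the multiplicities of the Jacobian determinants of the two modifications. It is a hypothesis, not a consequence. Once you take it as given, the chain-rule computation immediately shows $|\det\d h|$ is a positive Nash unit near the origin, bounded below by some $c>0$, and the proof is complete; this is exactly the paper's one-line argument "by the chain rule, since $\Phi$ preserves the multiplicities of the Jacobian determinants of $\nu_1$ and $\nu_2$, we deduce that $|\det\d h|>c$." So your overall structure is right, but you should replace the attempted derivation in your final paragraph with a direct appeal to this clause of the definition.
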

\begin{proof}
Assume that $f_1$ and $f_2$ are blow-Nash equivalent in the sense of \cite{Fic05}. Then we have
$$\xymatrix{M_1 \ar[rr]^\Phi \ar[d]_{\nu_1} & & M_2 \ar[d]^{\nu_2} \\ \mathbb R^d \ar[rr]^\phi \ar[rd]_{f_1} & & \mathbb R^d \ar[ld]^{f_2} \\ & \mathbb R &}$$
with $\phi$ a semialgebraic homeomorphism, $\nu_i$ two proper birational algebraic maps and $\Phi$ a Nash isomorphism which preserves the multiplicities of the jacobian determinants of $\nu_1$ and $\nu_2$. \\
Since we may lift analytic arcs by $\nu_1$, $\phi$ is arc-analytic. By the chain rule, since $\Phi$ preserves the multiplicities of the jacobian determinants of $\nu_1$ and $\nu_2$, we deduce that $|\det\d\phi|>c$ for some $c>0$.
\end{proof}

\begin{prop}\label{prop:BNFic2}
Two Nash germs are arc-analytic equivalent if and only if they are blow-Nash equivalent via a blow-Nash isomorphism in the sense of \cite[Definition 1.1]{Fic05-bis}.
\end{prop}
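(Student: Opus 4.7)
The strategy is to prove both implications by translating between the two frameworks, using the diagram that already appears in the remark after Definition \ref{defn:blowNash}. The proof of Proposition \ref{prop:BNFic1} gives essentially half of the ``if'' direction; we only need to argue that the condition that $\Phi$ preserves multiplicities of the jacobian determinants is exactly what turns the existence of the Nash modifications into the bound $|\det\d\phi|>c$.

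For the ``only if'' direction: Assume $f_1$ and $f_2$ are arc-analytic equivalent via $h:(\mathbb R^d,0)\to(\mathbb R^d,0)$. By the remark following Definition \ref{defn:blowNash}, choose $\varphi:M\to\mathbb R^d$ a finite composition of blowings-up with non-singular centers such that $\tilde\varphi=h\circ\varphi$ is Nash. Take $M_1=M_2=M$, $\nu_1=\varphi$, $\nu_2=\tilde\varphi$ and $\Phi=\id_M$. The diagram
$$\xymatrix{M \ar[rr]^{\id} \ar[d]_{\varphi} & & M \ar[d]^{\tilde\varphi} \\ \mathbb R^d \ar[rr]^h \ar[dr]_{f_1} & & \mathbb R^d \ar[dl]^{f_2} \\ & \mathbb R &}$$
commutes by construction. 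The chain rule gives $\det\d\tilde\varphi=(\det\d h\circ\varphi)\cdot\det\d\varphi$ wherever all terms are defined, so the bound $0<c<|\det\d h|<C$ (the upper bound following from semialgebraicity of $h$ on a small compact neighborhood of the origin) implies that along every irreducible exceptional component $E$ of $\varphi$, the order of vanishing of $\det\d\tilde\varphi$ equals that of $\det\d\varphi$; this is the multiplicity-preservation condition required by \cite[Definition 1.1]{Fic05-bis}.

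For the ``if'' direction: Suppose $f_1$ and $f_2$ are blow-Nash equivalent via a blow-Nash isomorphism, giving the commutative diagram of Proposition \ref{prop:BNFic1} with proper birational $\nu_i:M_i\to\mathbb R^d$, a Nash isomorphism $\Phi:M_1\to M_2$, and a semialgebraic homeomorphism $\phi$ with $f_1=f_2\circ\phi$. The proof of Proposition \ref{prop:BNFic1} already shows that $\phi$ is arc-analytic: any real analytic arc $\gamma$ not contained in the critical locus of $\nu_1$ lifts to an analytic arc $\tilde\gamma$ on $M_1$ via \cite[Corollary 4.17]{jbc1}, so $\phi\circ\gamma=\nu_2\circ\Phi\circ\tilde\gamma$ is analytic; arcs contained in the critical locus are handled by semialgebraic continuity. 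It remains to obtain the bound $|\det\d\phi|>c$. Using $\phi\circ\nu_1=\nu_2\circ\Phi$, the chain rule gives, at a generic point over each exceptional component,
$$\det\d\phi\cdot\det\d\nu_1=\det\d\nu_2\cdot\det\d\Phi,$$
and since $\Phi$ is a Nash isomorphism $\det\d\Phi$ is bounded away from $0$; the multiplicity-preservation assumption means that $\det\d\nu_2/\det\d\nu_1$ extends to a nowhere vanishing Nash function on a neighborhood of the exceptional divisor (both numerator and denominator have the same monomial factor along each component), hence $|\det\d\phi|$ is bounded below on the complement of a codimension $\ge 2$ set; by semialgebraic continuity this extends to a uniform positive lower bound near the origin.

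The main obstacle is the jacobian bound step in each direction. Going forward, one must argue that a pointwise bound on $|\det\d h|$ passes to equality of orders of vanishing of jacobians along exceptional components; going backward, one must promote the multiplicity equality (an algebraic/divisorial statement on $M_1,M_2$) to a uniform pointwise bound on $|\det\d\phi|$ on $\mathbb R^d$. Both rely on the fact that, in suitable local coordinates on the modifications, the jacobians of $\nu_1,\nu_2$ are monomials in the exceptional coordinates, so their ratio is a unit precisely when the multiplicities agree.
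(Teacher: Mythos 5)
Your ``if'' direction matches the paper's proof in spirit, and your multiplicity argument for $|\det\d\phi|>c$ is a reasonable unpacking of the paper's terse ``by the chain rule''.

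The gap is in the ``only if'' direction. You set $M_1=M_2=M$, $\nu_1=\varphi$, $\nu_2=\tilde\varphi=h\circ\varphi$, and $\Phi=\id_M$. But Fichou's Definition 1.1 requires both $\nu_1$ and $\nu_2$ to be \emph{Nash modifications}, i.e.\ proper surjective Nash maps whose \emph{complexifications} are proper and bimeromorphic. For $\nu_1=\varphi$ (a sequence of algebraic blowings-up) this is automatic, but for $\nu_2=\tilde\varphi$ it is not: you only know $\tilde\varphi$ is a proper, generically one-to-one Nash map whose real jacobian is controlled. Real generic injectivity together with a nonvanishing real jacobian does not imply that the holomorphic extension $\tilde\varphi_{\mathbb C}$ is generically injective --- nothing you have written rules out extra complex-conjugate preimages --- so bimeromorphicity of $\tilde\varphi_{\mathbb C}$ is an unverified claim. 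Since $h$ itself is only arc-analytic, not analytic, you cannot transfer bimeromorphicity from $\varphi_{\mathbb C}$ via a complexification of $h$.

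The paper sidesteps exactly this issue: it introduces $\nu_1,\nu_2$ so that both $h\circ\nu_1$ and $h^{-1}\circ\nu_2$ are Nash, forms the fiber product $N$ of $h\circ\nu_1$ and $\nu_2$ inside $M_1\times M_2$ (which equals the fiber product of $h^{-1}\circ\nu_2$ and $\nu_1$), and resolves its singularities by $\sigma:\tilde N\to N$. The projections $\pi_i:N\to M_i$ then have bimeromorphic complexifications because they are base changes of $(\nu_2)_{\mathbb C}$ (resp.\ $(\nu_1)_{\mathbb C}$), which are bimeromorphic as complexifications of algebraic blowings-up, and $\nu_i\pi_i\sigma$ are genuine Nash modifications with $\Phi=\id_{\tilde N}$. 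Your proposal needs either this fiber-product construction or a separate argument that $\tilde\varphi_{\mathbb C}$ is bimeromorphic; as written, the ``only if'' direction does not close.
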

\begin{proof}
Assume that $f_1$ and $f_2$ are blow-Nash equivalent via a blow-Nash isomorphism in the sense of \cite{Fic05-bis}. Then we have
$$\xymatrix{M_1 \ar[rr]^\Phi \ar[d]_{\nu_1} & & M_2 \ar[d]^{\nu_2} \\ \mathbb R^d \ar[rr]^\phi \ar[rd]_{f_1} & & \mathbb R^d \ar[ld]^{f_2} \\ & \mathbb R &}$$
with $\phi$ a semialgebraic homeomorphism, $\nu_i$ two Nash modifications\footnote{A Nash modification is a proper surjective Nash map whose complexification is proper and bimeromorphic.} and $\Phi$ a Nash isomorphism which preserves the multiplicities of the jacobian determinants of $\nu_1$ and $\nu_2$. \\
Since we may lift analytic arcs by $\nu_1$, $\phi$ is arc-analytic. 
By the chain rule, since $\Phi$ preserves the multiplicities of the jacobian determinants of $\nu_1$ and $\nu_2$, we deduce that $|\det\d\phi|>c$ for some $c>0$.

Assume that $f_1$ and $f_2$ are arc-analytic equivalent via $h$: $f_1=f_2\circ h$ with $h$ semialgebraic and arc-analytic satisfying $|\det\d h|>c>0$. Since $h$ and $h^{-1}$ are blow-Nash there exist $\nu_1:M_1\rightarrow\mathbb R^d$ and $\nu_2:M_2\rightarrow\mathbb R^d$ two finite sequences of blowings-up with non-singular centers such that $\alpha_1=h\circ\nu_1$ and $\alpha_2=h^{-1}\circ\nu_2$ are Nash. Let $N_1$ (resp. $N_2$) be the fiber product of $\alpha_1$ and $\nu_2$ (resp. $\alpha_2$ and $\nu_1$). Then $N_1=N_2=N$ in $M_1\times M_2$. Notice that $\pi_i$ and $N_i$ are Nash. Let $\sigma:\tilde N\rightarrow N$ be a resolution of singularities (for Nash spaces, see \cite[p234]{BM97}). Then for $i=1,2$, $\nu_i\pi_i\sigma$ is a Nash modification.
$$\xymatrix{
& \tilde N \ar[d]_\sigma & \\
N_1 \ar[d]_{\pi_1} \ar[rrd]_<<<<{\rho_1} \ar@{=}[r] & N \ar@{=}[r] & N_2 \ar[d]^{\pi_2} \ar[lld]^<<<<{\rho_2} \\
M_1 \ar[rrd]_<<<<{\alpha_1} \ar[d]_{\nu_1} & & M_2 \ar[lld]^<<<<{\alpha_2} \ar[d]^{\nu_2} \\
\mathbb R^d \ar@<+.3ex>[rr]^h \ar[rd]_{f_1} & & \mathbb R^d \ar[ld]^{f_2} \ar@<+.3ex>[ll]^{h^{-1}}  \\
& \mathbb R &
}$$
\end{proof}

\begin{cor}\label{cor:BNisER}
Blow-Nash equivalence via a blow-Nash isomorphism in the sense of \cite[Definition 1.1]{Fic05-bis} is an equivalence relation.
\end{cor}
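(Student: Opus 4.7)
The proof is essentially immediate from the results established earlier in the section, and the plan is simply to chain them together. The key observation is that we already have two separate facts at hand: first, arc-analytic equivalence has been shown to be an equivalence relation (via the explicit verification of reflexivity, symmetry using \cite[Theorem 3.5]{jbc1}, and transitivity by composing two diagrams); second, Proposition \ref{prop:BNFic2} establishes that arc-analytic equivalence and blow-Nash equivalence via a blow-Nash isomorphism in the sense of \cite[Definition 1.1]{Fic05-bis} coincide as relations on Nash germs.

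Thus the plan is: invoke Proposition \ref{prop:BNFic2} to identify the two relations, and then transport the properties of reflexivity, symmetry, and transitivity from arc-analytic equivalence to blow-Nash equivalence. More formally, since the two relations define the same subset of $\{(f_1,f_2)\}$, and one of them is an equivalence relation, so is the other. No new construction is needed and there is no real obstacle; the work of this corollary has already been done in Proposition \ref{prop:BNFic2}, whose nontrivial direction required producing a common Nash modification $\tilde N$ dominating both $M_1$ and $M_2$ via a resolution of singularities of the fiber product. The corollary is merely the payoff: previously, transitivity of the Fichou-style definition was unclear because there was no obvious way to compose two blow-Nash isomorphisms (the Nash modifications on the two sides need not match), but reformulating the relation in terms of semialgebraic arc-analytic homeomorphisms with bounded Jacobian makes composition transparent, since such maps are visibly closed under composition.
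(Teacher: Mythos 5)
Your proposal is correct and is exactly the paper's (implicit) argument: the corollary carries no separate proof in the text precisely because it follows immediately from the equality of the two relations established in Proposition~\ref{prop:BNFic2} together with the earlier proposition that arc-analytic equivalence is an equivalence relation. Your final paragraph correctly identifies where the real work lies (the resolution of the fiber product in Proposition~\ref{prop:BNFic2}, and the fact that composition is transparent for semialgebraic arc-analytic homeomorphisms with Jacobian bounded away from zero).
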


\begin{thm}
If two Nash germs $f,g:(\mathbb R^d,0)\rightarrow(\mathbb R,0)$ are arc-analytic equivalent then $Z_f(T)=Z_g(T)$.
\end{thm}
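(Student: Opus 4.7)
The plan is to construct a common Nash modification that resolves $f$ and $g$ simultaneously, and then to read off $Z_f(T)=Z_g(T)$ term by term from the rationality formula \ref{thm:rat}.

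First, I would exploit the construction appearing in the proof of Proposition \ref{prop:BNFic2}. Starting from the semialgebraic homeomorphism $h:(\mathbb R^d,0)\to(\mathbb R^d,0)$ with $f=g\circ h$, one obtains Nash modifications $\mu_1,\mu_2:\tilde N\to\mathbb R^d$ with $h\circ\mu_1=\mu_2$, so that in particular $f\circ\mu_1=g\circ\mu_2$ as a single Nash function on $\tilde N$. After further composition with a finite sequence of algebraic blowings-up with non-singular centers (applying Proposition \ref{prop:NashResol} to a product Nash function so as to simultaneously monomialize $f\circ\mu_1$, $\Jac(\mu_1)$ and $\Jac(\mu_2)$), one may assume that these three data have only normal crossings at the same time on $\tilde N$.

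Next, I would match the combinatorial invariants entering Theorem \ref{thm:rat}. The irreducible $\AS$-components $(E_i)_{i\in A}$ of $(f\circ\mu_1)^{-1}(0)=(g\circ\mu_2)^{-1}(0)$ and the orders $N_i=\ord_{E_i}(f\circ\mu_1)=\ord_{E_i}(g\circ\mu_2)$ are manifestly the same. The equality of the Jacobian multiplicities $\nu_i^{(1)}=\nu_i^{(2)}$ is the key point: by symmetry of the arc-analytic equivalence, $h^{-1}$ also satisfies the Definition \ref{defn:blowNash} hypothesis, which forces $|\det\d h|$ to be bounded both above and below where it is defined; combined with the chain rule $\Jac(\mu_2)=(\det\d h\circ\mu_1)\cdot\Jac(\mu_1)$, valid on the dense semialgebraic open subset of $\tilde N$ where $h$ is differentiable at $\mu_1(\cdot)$, this yields $\ord_{E_i}(\det\d h\circ\mu_1)=0$ and hence $\nu_i^{(1)}=\nu_i^{(2)}=:\nu_i$. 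Finally, since the bundle $U_I$, the morphism $f_I:U_I\to\mathbb R^*$ and the $\mathbb R^*$-action in Definition \ref{defn:UI} depend only on the local monomial expression of $f\circ\mu_1=g\circ\mu_2$ along $\pring{E_I}$, and since $\mu_1^{-1}(0)=\mu_2^{-1}(0)$ (because $h$ is a bijection fixing the origin), we get $\left[U_I\cap(\mu_1\circ p_I)^{-1}(0)\right]=\left[U_I\cap(\mu_2\circ p_I)^{-1}(0)\right]$ in $K_0$. Plugging everything into the rationality formula gives $Z_f(T)=Z_g(T)$ term by term.

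The main obstacle is twofold. First, Theorem \ref{thm:rat} is literally stated for a resolution produced by Proposition \ref{prop:NashResol}, i.e.\ a finite sequence of algebraic blowings-up, whereas the $\mu_i$ are only Nash modifications; one has to check that the proof of \ref{thm:rat} (which ultimately rests on the change of variables key lemma \ref{lem:keylemma}, stated for arbitrary proper generically one-to-one Nash maps from non-singular Nash varieties) goes through verbatim in this broader setting. Second, and more delicate, one must handle with care that $h$ is merely a semialgebraic homeomorphism whose differential is defined only off a nowhere-dense semialgebraic set, in order to legitimately extract from the two-sided bound on $|\det\d h|$ the order-zero statement $\ord_{E_i}(\det\d h\circ\mu_1)=0$ that drives the equality $\nu_i^{(1)}=\nu_i^{(2)}$.
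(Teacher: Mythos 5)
Your proof is essentially correct and follows the same overall strategy as the paper: lift $f$ and $g$ to a common non-singular space on which both become simultaneously normal crossings, match the data $(N_i,\nu_i,U_I)$ entering the rationality formula, and read off $Z_f=Z_g$ from Theorem \ref{thm:rat}. Two choices differ from the paper. First, to build the common resolution you invoke the fiber-product construction from Proposition \ref{prop:BNFic2}, obtaining two Nash modifications $\mu_1,\mu_2:\tilde N\to\mathbb R^d$ with $h\circ\mu_1=\mu_2$; the paper instead uses the simpler diagram from the remark following Definition \ref{defn:blowNash}, namely $\varphi:M\to\mathbb R^d$ a finite sequence of \emph{algebraic} blowings-up monomializing the arc-analytic map $h$ (Bierstone--Milman), so that $\tilde\varphi=h\circ\varphi$ is Nash and the whole argument takes place over the algebraic variety $M$. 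This matters slightly for your appeal to Proposition \ref{prop:NashResol}: that proposition is stated for Nash functions on algebraic sets, whereas $\tilde N$ is only a Nash manifold, so you would need to justify a Nash-manifold version of the monomialization (via \cite{BM97}) or switch to the paper's setup. It also means you must extend Theorem \ref{thm:rat} to both $\mu_1$ and $\mu_2$, while the paper applies the literal statement to $\varphi$ and its extension only to $\tilde\varphi$ — in either case a harmless step, since, as the paper explicitly observes, the proof of Theorem \ref{thm:rat} only uses properness and generic injectivity via the key Lemma \ref{lem:keylemma}. Second, for the crucial equality $\nu_i^{(1)}=\nu_i^{(2)}$ you sketch the boundedness argument from the chain rule together with the two-sided bound on $|\det\d h|$ that the symmetry of the relation forces. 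That argument is sound — the Nash quotient $\Jac\mu_2/\Jac\mu_1$ equals $\det\d h\circ\mu_1$ on a dense open set and must therefore be a unit along each $E_i$ — and is precisely the content of Lemma 4.15 of the companion paper \cite{jbc1}, which the paper cites directly rather than reproving. Both obstacles you flag at the end are genuine, and they are exactly what the paper disposes of: the first by the explicit note that Theorem \ref{thm:rat} needs only a proper generically one-to-one Nash map, the second by the citation to \cite{jbc1}.
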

\begin{proof}
We have the following diagram
$$\xymatrix{
& M \ar[dl]_\varphi \ar[dr]^{\tilde\varphi} & \\
\mathbb R^d \ar[rr]^h \ar[rd]_{f} & & \mathbb R^d \ar[ld]^{g} \\
& \mathbb R &
}$$
where $h$ is a semialgebraic homeomorphism, $\varphi$ a finite sequence of blowings-up with non-singular centers and $\tilde\varphi$ a Nash map. \\

Notice that in the statement of Theorem \ref{thm:rat} we only need that $h$ is proper generically 1-to-1 Nash and not merely birational. Therefore we may also apply this theorem to $\tilde\varphi$ in order to compute $Z_g(T)$. \\

Up to adding more blowings-up we may assume that $f\circ\varphi$, $g\circ\tilde\varphi=g\circ h\circ\varphi$, $\Jac\varphi$ and $\Jac\tilde\varphi$ are simultaneously normal crossings. Denote by $(E_i)_{i\in A}$ the irreducible components of the zero set of $f\circ\varphi=g\circ\tilde\varphi$. Set $f\circ\varphi=\sum_{i\in A}N_iE_i$, $g\circ\tilde\varphi=\sum_{i\in A}\tilde N_iE_i$, $\Jac\varphi=\sum_{i\in A}(\nu_i-1)E_i$ and $\Jac\tilde\varphi=\sum_{i\in A}(\tilde\nu_i-1)E_i$. \\
By \cite[Lemma 4.15]{jbc1}, $\forall i\in A,\,\nu_i=\tilde\nu_i$. And since $f\circ\varphi=g\circ\tilde\varphi$, we have $\forall i\in A,\,N_i=\tilde N_i$ and that $U_I$ is well-defined and doesn't depend on $\varphi$ or $\tilde\varphi$. \\

Thus by Theorem \ref{thm:rat} $$Z_f(T)=Z_g(T)=\sum_{\varnothing\neq I\subset A}\left[U_I\cap(h\circ p_I)^{-1}(0)\right]\prod_{i\in I}\frac{\mathbb L^{-\nu_i}T^{N_i}}{1-\mathbb L^{-\nu_i}T^{N_i}}$$
\end{proof}

\begin{rem}
Particularly, by Proposition \ref{prop:BNFic2}, we recover \cite[Proposition 2.6]{Fic05-bis}.
\end{rem}

By proposition \ref{prop:BNFic1}, \cite[Theorem 4.3]{Fic05} works as it is in our settings (see also \cite[Theorem 1.5]{Fic05-bis} and \cite[Theorem 1]{Kuo85}).
\begin{thm}\label{thm:triviality}
Let $F:(\mathbb R^d,0)\times(0,1)^k\rightarrow(\mathbb R,0)$ be a Nash function such that $\forall t\in(0,1)^k$, $F(\cdot,t):(\mathbb R^d,0)\rightarrow(\mathbb R,0)$ has an isolated singularity at $0$ and there exists an algebraic proper birational map $\sigma:M\rightarrow\mathbb R^d\times(0,1)^k$ such that $F\circ\sigma$ has only normal crossings. Then the elements of the family $F_t(\cdot)=F(\cdot,t)$ represent a finite number of arc-analytic equivalence classes.
\end{thm}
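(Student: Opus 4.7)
The plan is to reduce the statement to Fichou's finiteness theorem for blow-Nash equivalence \cite[Theorem 4.3]{Fic05} by appealing to Proposition \ref{prop:BNFic1}. The hypotheses of \cite[Theorem 4.3]{Fic05} coincide verbatim with the ones assumed here: a Nash family, each $F_t$ having an isolated singularity at the origin, and a simultaneous algebraic proper birational resolution $\sigma:M\to\mathbb R^d\times(0,1)^k$ along which $F\circ\sigma$ has only normal crossings. Granting that result, $\{F_t\}_{t\in(0,1)^k}$ splits into only finitely many blow-Nash equivalence classes in the sense of \cite[Definition 4.1]{Fic05}.

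For the sake of completeness I would recall the strategy underlying \cite[Theorem 4.3]{Fic05}, which itself refines \cite[Theorem 1]{Kuo85}: one semi-algebraically stratifies the parameter space $(0,1)^k$ in such a way that, on each stratum $S$, the combinatorial data attached to the fiberwise restriction of the divisor $(F\circ\sigma)^{-1}(0)$ remains locally constant — namely the intersection pattern of the irreducible components meeting the central fiber together with their multiplicities $N_i$ in $F\circ\sigma$ and the multiplicities $\nu_i-1$ of $\Jac\sigma$. Only finitely many such strata occur since the resolution is algebraic and all of these data are semi-algebraically constructible. On each stratum one then builds a trivialization by lifting the constant vector fields $\partial/\partial t_j$ to vector fields on $M$ tangent to each irreducible component of $(F\circ\sigma)^{-1}(0)$, integrating them, and pushing the resulting Nash flow down via $\sigma$. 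This produces a Nash isomorphism $\Phi$ upstairs and a semi-algebraic homeomorphism $\phi$ downstairs preserving the multiplicities of the Jacobian determinants, which is exactly the data of a blow-Nash equivalence between any two members of the family parameterized by the same stratum.

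Once the finiteness at the blow-Nash level is in hand, the theorem follows at once: by Proposition \ref{prop:BNFic1}, two Nash germs that are blow-Nash equivalent in the sense of \cite{Fic05} are automatically arc-analytic equivalent, so the partition of $(0,1)^k$ induced by arc-analytic equivalence of the family is a coarsening of the blow-Nash partition and is therefore finite. I would expect the main point to worry about in a direct approach to be the verification of the condition $|\det\d\phi|>c$ from Definition \ref{defn:blowNash} uniformly along a family, but this is already absorbed in the trivialization step of \cite[Theorem 4.3]{Fic05}: because $\Phi$ is a Nash isomorphism that preserves the Jacobian multiplicities of $\nu_1,\nu_2$, the chain rule applied on each stratum produces a uniform lower bound on $|\det\d\phi|$, exactly as in the proof of Proposition \ref{prop:BNFic1}. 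Hence nothing more than the two ingredients above is needed.
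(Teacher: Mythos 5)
Your argument is exactly the paper's: invoke Fichou's finiteness theorem \cite[Theorem 4.3]{Fic05} (itself patterned on \cite[Theorem 1]{Kuo85}) to split the family into finitely many blow-Nash classes, then apply Proposition \ref{prop:BNFic1} to pass from blow-Nash to arc-analytic equivalence, whence the arc-analytic partition is a coarsening of a finite one. The additional paragraph recalling the stratification-and-lifted-vector-field strategy behind Fichou's theorem is accurate and harmless, but not part of what the paper supplies here.
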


In the same way, we recover a version of \cite[Proposition 4.17]{Fic05}.
\begin{prop}
Let $F:(\mathbb R^d,0)\times(0,1)^k\rightarrow(\mathbb R,0)$ be a Nash function such that $\forall t\in(0,1)^k$, $F(\cdot,t):(\mathbb R^d,0)\rightarrow(\mathbb R,0)$ has an isolated singularity at $0$ and there exists an algebraic proper birational map $\sigma:M\rightarrow\mathbb R^d$ such that $F\circ(\sigma,\id_{(0,1)^k})$ has only normal crossings. Then the elements of the family $F_t(\cdot)=F(\cdot,t)$ represent a unique arc-analytic equivalence class.
\end{prop}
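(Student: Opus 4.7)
The plan is to adapt the classical Kuo-style construction of a blow-analytic trivialization, using the fact that the resolution $\sigma$ is independent of $t$. Since $(\sigma,\id_{(0,1)^k})$ resolves the whole family at once, the exceptional divisors lift to products $E_i\times(0,1)^k$, and in local coordinates on $M\times(0,1)^k$ we can write
\[ F\circ(\sigma,\id)(x,t)=u_\alpha(x,t)\prod_{i\in I_\alpha}x_i^{N_i}, \]
where the exponents $N_i$ do not depend on $t$ and $u_\alpha$ is a nowhere vanishing Nash function. The plan is to construct an integrable vector field on $M\times(0,1)^k$ along the parameter directions that preserves $F\circ(\sigma,\id)$, then flow it and push it down through $\sigma$ to obtain the arc-analytic equivalence on $\mathbb R^d$.

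First I would cover $M\times(0,1)^k$ by a finite family of Nash charts $U_\alpha$ trivializing the normal crossings. On each chart, for each parameter direction $j=1,\dots,k$, I look for a Nash vector field $\xi_j^\alpha=\partial/\partial t_j+\sum_i a_i^\alpha\,\partial/\partial x_i$ satisfying $\xi_j^\alpha(F\circ(\sigma,\id))=0$. Thanks to the monomial form, this reduces to the linear equation
\[ \sum_{i\in I_\alpha}\frac{N_i\,a_i^\alpha}{x_i}=-\frac{1}{u_\alpha}\frac{\partial u_\alpha}{\partial t_j}, \]
for which we may take, e.g., $a_i^\alpha=-\frac{x_i}{N_i|I_\alpha|}u_\alpha^{-1}\partial_{t_j}u_\alpha$; note that each $a_i^\alpha$ vanishes on the component $E_i$, so the $\xi_j^\alpha$ are tangent to the exceptional divisors. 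Next I glue using a semialgebraic partition of unity subordinate to the covering to produce global semialgebraic vector fields $\xi_j$ on $M\times(0,1)^k$ that still annihilate $F\circ(\sigma,\id)$ and remain tangent to each $E_i$. Integrating the combined flow yields, for any $t_0,t_1\in(0,1)^k$, a semialgebraic homeomorphism $\Phi\colon M\to M$ preserving the total transform and the exceptional divisor structure, so that $(F\circ\sigma)(\cdot,t_0)=(F\circ\sigma)(\Phi(\cdot),t_1)$.

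Finally I pass to $\mathbb R^d$ via $\sigma$. Using that $\sigma$ induces a bijection between arcs on $M$ not contained in the exceptional locus and arcs on $\mathbb R^d$ not contained in a nowhere dense set (cited from \cite[Corollary 4.17]{jbc1}), and that $\Phi$ preserves the exceptional divisors, the map $h=\sigma\circ\Phi\circ\sigma^{-1}$ is a well-defined semialgebraic homeomorphism of $(\mathbb R^d,0)$ such that $F_{t_0}=F_{t_1}\circ h$. Arc-analyticity of $h$ follows by lifting analytic arcs through $\sigma$, applying $\Phi$, and pushing down by $\sigma$; this is precisely the strategy used in Proposition \ref{prop:BNFic1}. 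The Jacobian bound $|\det\d h|>c$ is obtained via the chain rule from the fact that $\Phi$ preserves the multiplicities of $\Jac\sigma$ along each $E_i$ (since $\xi_j$ is tangent to the $E_i$), exactly as in the proof of Theorem \ref{thm:triviality}.

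The main obstacle is the global construction and regularity of the vector fields $\xi_j$: one has to ensure simultaneously that the gluing remains tangent to all $E_i$ (so that the flow preserves the divisor and the Jacobian multiplicities) and that the resulting semialgebraic flow is well-defined and continuous on all of $M\times(0,1)^k$, in particular across the crossings of several $E_i$'s. Once these two features are secured—tangency being automatic from the local formula above and continuity from the semialgebraic partition of unity—the rest of the argument is a direct application of the machinery already invoked in the proofs of Proposition \ref{prop:BNFic1} and Theorem \ref{thm:triviality}, and yields the desired single arc-analytic equivalence class.
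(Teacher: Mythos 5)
The paper does not give a direct construction here: it derives this Proposition by appealing to Fichou's result \cite[Proposition 4.17]{Fic05}, which yields a single blow-Nash equivalence class, and then applies Proposition~\ref{prop:BNFic1} (blow-Nash implies arc-analytic) to conclude. Your proposal instead tries to re-run the underlying Kuo-style vector-field argument from scratch, and there is a genuine gap in that reconstruction.

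The step that fails is the assertion that ``integrating the combined flow yields \ldots\ a \emph{semialgebraic} homeomorphism $\Phi:M\to M$.'' The fixed-time flow of a semialgebraic (even polynomial) vector field is in general \emph{not} semialgebraic: for instance, the flow of $\partial_x + xy\,\partial_y$ is $(x_0,y_0)\mapsto\bigl(x_0+t,\,y_0\,e^{x_0 t + t^2/2}\bigr)$, whose second component is not semialgebraic in $(x_0,y_0)$ for fixed $t\neq 0$. Arc-analytic equivalence (Definition~\ref{defn:blowNash}) demands a semialgebraic homeomorphism $h$ downstairs, and your $h=\sigma\circ\Phi\circ\sigma^{-1}$ inherits non-semialgebraicity from $\Phi$. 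Replacing the analytic flow by a Nash or semialgebraic isotopy with the right tangency and trivialization properties is exactly the technically delicate content of Fichou's argument (which relies on Nash approximation à la Shiota), and it is not a ``direct application'' of the machinery in Proposition~\ref{prop:BNFic1} or Theorem~\ref{thm:triviality} --- those results only tell you how to pass from a blow-Nash isomorphism to arc-analyticity, not how to produce a semialgebraic isotopy from a vector field. You identify tangency to the $E_i$ and continuity across crossings as the ``main obstacle,'' and you handle those correctly with the local formula and partition of unity; but the missing obstacle is semialgebraicity of the integrated flow, and this one is not automatic. Citing \cite[Proposition 4.17]{Fic05} as a black box, as the paper does, is the intended route.
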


Again, the following corollary is just a version of \cite[Corollary 4.5]{Fic05}.
\begin{cor}\label{cor:BNTrivFam}
Let $F:(\mathbb R^d,0)\times(0,1)\rightarrow(\mathbb R,0)$ be a Nash function such that $F(\cdot,t):(\mathbb R^d,0)\rightarrow(\mathbb R,0)$ are weighted homogeneous polynomials with the same weights and have an isolated singularity at $0$. Then the elements of the family $F_t(\cdot)=F(\cdot,t)$ represent a unique arc-analytic equivalence class.
\end{cor}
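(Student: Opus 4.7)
The strategy is to reduce to the preceding proposition by exhibiting a single algebraic proper birational map $\sigma: M \rightarrow \mathbb R^d$, independent of $t$, such that $F \circ (\sigma, \id_{(0,1)})$ has only normal crossings on $M \times (0,1)$. Once such $\sigma$ is produced, the preceding proposition applies directly and yields that all $F_t$ lie in a single arc-analytic equivalence class.

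To construct $\sigma$, let $(w_1, \ldots, w_d)$ denote the common weights and $N$ the weighted degree, so that $F_t(\lambda^{w_1} x_1, \ldots, \lambda^{w_d} x_d) = \lambda^N F_t(x_1, \ldots, x_d)$ for every $\lambda > 0$ and every $t \in (0,1)$. I would take $\sigma$ to be a toric modification associated with a regular fan subdivision of $(\mathbb R_{\ge 0})^d$ that contains the ray generated by $(w_1, \ldots, w_d)$. Such a toric modification is realised as a finite composition of algebraic blowings-up with non-singular centers, hence is algebraic, proper, and birational as required.

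On the chart of $M$ attached to the weighted ray, the pullback $F_t \circ \sigma$ factors as $u^N \tilde F_t(v)$, where $u$ is a local coordinate defining the exceptional divisor and $\tilde F_t$ is a Nash function in the transverse coordinates, coming from the restriction of $F_t$ to an affine patch of the weighted projective space. Weighted homogeneity identifies the hypersurface $\{\tilde F_t = 0\}$ with the projective link of the singularity of $F_t$ at the origin, which is smooth for every $t$ because $F_t$ has isolated singularity there. In the remaining charts, the toric nature of $\sigma$ together with the weighted homogeneity of $F_t$ forces the pullback to be a monomial times a unit.

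The main obstacle will be verifying that the normal crossings condition is uniform in $t$: the smoothness of the strict transform $\{\tilde F_t = 0\}$ and its transversality with the exceptional components must hold simultaneously for every $t \in (0,1)$. This is exactly what the isolated singularity hypothesis for each $F_t$ provides, since the only possible failure would come from a singular point of $\{F_t = 0\}$ outside the origin, which is excluded by hypothesis. With this verified, the hypotheses of the preceding proposition are satisfied and the corollary follows.
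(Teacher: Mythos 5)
Your approach is the same as the one behind the reference \cite[Corollary 4.5]{Fic05} that the paper cites for this statement: reduce to the preceding proposition by exhibiting a single $t$-independent toric modification $\sigma$ adapted to the common weight vector $(w_1,\dots,w_d)$. So the strategy is right, and the overall reduction is correct.

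Two of your intermediate claims, however, are asserted rather than justified, and they are the actual content of the verification. First, the statement that in the charts away from the weighted ray ``the toric nature of $\sigma$ together with the weighted homogeneity of $F_t$ forces the pullback to be a monomial times a unit'' is not a formal consequence of toricness; it holds because weighted homogeneity makes $V(F_t)$ invariant under the $\mathbb R_{>0}$-action $\lambda\cdot x=(\lambda^{w_i}x_i)_i$, so the strict transform of $V(F_t)$ is the closure of $V(F_t)\setminus\{0\}$ and its intersection with the exceptional locus lies entirely in the divisor $E_w$ attached to the weighted ray. Thus it avoids all the other exceptional divisors, which is what makes those charts monomial-times-unit. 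You should make this observation explicit instead of waving at the toric structure. Second, smoothness and transversality of the strict transform along $E_w$ does not reduce directly to smoothness of the link: the exceptional divisor of the weighted blow-up is a weighted projective space $\mathbb P(w)$ with quotient singularities, and a priori the strict transform can pass through them; the toric resolution replaces $\mathbb P(w)$ by a smooth modification, and you must check that the strict transform remains smooth and meets the exceptional divisors transversally \emph{after} this further modification. The isolated-singularity hypothesis does guarantee this, but the justification goes through the weighted-$\mathbb R_{>0}$-invariance and the transversal slice structure near $E_w$ rather than through the phrase ``singular point outside the origin,'' which does not by itself control behavior at the exceptional divisor. Finally, a small but necessary point: the preceding proposition asks that $F\circ(\sigma,\id_{(0,1)})$ be normal crossings as a function of $(x,t)$ on $M\times(0,1)$, not merely that each $F_t\circ\sigma$ be normal crossings; you should say a word about why the data (the divisors $E_i$, multiplicities $N_i$ and $\nu_i$) are independent of $t$ and why the strict transform forms a smooth family over $(0,1)$.
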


\begin{eg}
The Whitney family \cite[Example 13.1]{Whi65} $f_t(x,y)=xy(y-x)(y-tx)$, $t\in(0,1)$, has only one arc-analytic equivalence class.
\end{eg}

\section{Classification of Brieskorn polynomials}
In \cite{KP03}, S. Koike and A. Parusiński gave a complete classification of the Brieskorn polynomials in two variables up to blow-analytic equivalence using their zeta functions and the Fukui invariants. In \cite{Fic05}, G. Fichou classified the Brieskorn polynomials in three variables up to blow-Nash equivalence (and thus up to arc-analytic equivalence) thanks to his zeta functions. 
In this section we use the convolution formula to prove that two arc-analytic equivalent Brieskorn polynomials share the same exponents.

\begin{defn}
A Brieskorn polynomial is a polynomial of the following form $$f(x)=\sum_{i=1}^d\varepsilon_ix^{k_i}$$ where $\varepsilon_i\in\{\pm1\}$ and $k_i\ge1$.
\end{defn}

\begin{rem}
We will assume that $k_i\ge2$ since otherwise the polynomial is non-singular. Up to reordering the variables, we will always assume that $2\le k_1\le k_2\le\cdots\le k_d$.
\end{rem}

\begin{prop}\label{prop:recoverweights}
Let $$f(x)=\sum_{i=1}^d\varepsilon_ix^{k_i}$$ be a Brieskorn polynomial. We may recover $(k_1,\ldots,k_d)$ from $d$ and $Z_f(T)$.
\end{prop}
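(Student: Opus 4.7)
The plan is to recover the exponents $k_1 \le k_2 \le \cdots \le k_d$ one at a time from the coefficients of $Z_f(T)$, using the Hadamard convolution formula (Theorem~\ref{thm:TSzeta}) together with the explicit expression for the modified zeta function of a monomial (Example~\ref{eg:monomial}). Since $\tilde Z_f(T)$ and $Z_f(T)$ determine one another, I may freely pass between them; I write $(Z_f)_n$ (resp.\ $(\tilde Z_f)_n$) for the coefficient of $T^n$.

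First, $k_1 = \min_i k_i$ is characterized as the smallest integer $n \ge 1$ such that $(Z_f)_n \ne 0$ in $\M$. Indeed, for $n < k_1$ any arc $\gamma \in \mathcal L_n(\mathbb R^d,0)$ with $\gamma(0)=0$ satisfies $\ord_t f(\gamma) \ge \min_i k_i = k_1 > n$, so $\X_n(f) = \varnothing$. For $n = k_1$, after reordering so that $k_1$ is attained by the first index, the arc $\gamma(t) = (t,0,\ldots,0)$ lies in $\X_{k_1}(f)$; applying the forgetful morphism followed by the virtual Poincaré polynomial $\beta$ sends $[\X_{k_1}(f)]$ to a nonzero polynomial in $u$, so the class is nonzero in $\M$.

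Having identified $k_1$, I would proceed by strong induction on $d$. Writing $g = \sum_{i \ge 2} \varepsilon_i x_i^{k_i}$ and applying Theorem~\ref{thm:TSzeta}, we have $\tilde Z_f = -\tilde Z_{f_{k_1}^{\varepsilon_1}} \circledast \tilde Z_g$. By Example~\ref{eg:monomial}, whenever $k_1 \nmid n$ the coefficient $(\tilde Z_{f_{k_1}^{\varepsilon_1}})_n = -\mathbb L^{-\lfloor n/k_1 \rfloor}$ is a scalar in $\M_\AS \subset \M$; since $\mathbb 1$ is the unit of $*$, convolution by this scalar is ordinary multiplication, so
\[
(\tilde Z_g)_n \;=\; \mathbb L^{\lfloor n/k_1 \rfloor}\,(\tilde Z_f)_n \qquad \text{for every } n \text{ with } k_1 \nmid n.
\]
In particular, if $k_1 \nmid k_2$ then $(\tilde Z_g)_{k_2}$ is directly accessible, and the base step applied to $g$ identifies $k_2$ as the smallest $n$ at which $(\tilde Z_g)_n \ne -\mathbb 1$.

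The main obstacle is the case $k_1 \mid k_2$ (notably $k_1 = k_2$), in which the scalar division above cannot reach $k_2$. In that case I would examine the multiples of $k_1$ directly: as long as $m k_1 < k_2$ one has $(\tilde Z_g)_{m k_1} = -\mathbb 1$, so the convolution formula predicts $(\tilde Z_f)_{m k_1} = ([f_{k_1}^{\varepsilon_1}] - \mathbb 1)\,\mathbb L^{-m}$, and the smallest $m$ at which the observed coefficient deviates from this prediction locates $k_2 = m k_1$. The prediction requires the class $[f_{k_1}^{\varepsilon_1}]$, which is recovered from $(\tilde Z_f)_{k_1}$ together with $k_1$ (for $k_1$ odd, $[f_{k_1}^{1}] = [f_{k_1}^{-1}]$ via $x \mapsto -x$, so the sign is immaterial; for $k_1$ even it is read off from the class $(\tilde Z_f)_{k_1}$ via $F^\pm$). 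Combining both regimes yields $k_2$, and iterating the whole procedure on $g$ recovers $k_3, \ldots, k_d$, completing the induction.
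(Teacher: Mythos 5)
There is a genuine gap, and it is fatal for odd exponents. Your induction hinges on recovering $(\tilde Z_g)_n$ from $(\tilde Z_f)_n = -(\tilde Z_{f_{k_1}^{\varepsilon_1}})_n * (\tilde Z_g)_n$. At indices $n$ with $k_1 \nmid n$ the factor $(\tilde Z_{f_{k_1}^{\varepsilon_1}})_n = -\mathbb L^{-\lfloor n/k_1\rfloor}$ is an invertible scalar and the division goes through. But at $n = mk_1$ the factor is $-(\mathbb 1 - [f_{k_1}^{\varepsilon_1}])\mathbb L^{-m}$, and when $k_1$ is odd this is \emph{zero}: the map $x \mapsto \varepsilon_1 x^{k_1}$ is an $\AS$-bijection $\mathbb R^* \to \mathbb R^*$ which is equivariant from $(\mathbb R^*,\lambda\cdot x = \lambda x, f_{k_1}^{\varepsilon_1})$ to $\mathbb 1_{k_1} = (\mathbb R^*,\lambda\cdot a = \lambda^{k_1}a,\id)$, so $[f_{k_1}^{\varepsilon_1}] = \mathbb 1$ in $K_0$. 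Consequently $(\tilde Z_f)_{mk_1} = 0$ for every $m$, with no dependence on $(\tilde Z_g)_{mk_1}$ at all, and your ``predicted'' value $([f_{k_1}^{\varepsilon_1}] - \mathbb 1)\mathbb L^{-m}$ is also zero, so the ``observed deviates from predicted'' criterion never triggers. Thus when $k_1$ is odd and $k_1 \mid k_2$ (e.g.\ $f = x^3 + y^6$) your method returns no information at multiples of $k_1$, and the base step applied to $g$ at non-multiples of $k_1$ may overshoot $k_2$. Even for $k_1$ even, the claim that observed $\ne$ predicted at $m = k_2/k_1$ amounts to $(\mathbb 1 - [f_{k_1}^{\varepsilon_1}]) * ((\tilde Z_g)_{k_2} + \mathbb 1) \ne 0$ in $\M$, which needs an argument since $*$ is not known to be cancellative; you give none.

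The paper's proof takes a disjoint route precisely to avoid this trap: it never examines $a_n$ when $n$ is a multiple of a potential exponent. Instead it uses the scalar formula $a_n = -\mathbb L^{-\sum_i\lfloor n/k_i\rfloor}$ valid for $n$ prime to all $k_i$, passes to the integers by taking $\deg\beta(\overline{a_n})$, extracts $\sum_i 1/k_i$ (hence a bound $K$ on the exponents) from the asymptotics over large primes, and then uses the Chinese remainder theorem to manufacture integers $n$ with prescribed divisibility patterns so that differences such as $-\deg\beta(\overline{a_{n+1}}) + \deg\beta(\overline{a_{n-1}}) = \sum_{q\mid n}\mult(q)$ yield a solvable system of equations in the multiplicities $\mult(q)$. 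The coefficients at multiples of odd exponents, which carry no usable information (they vanish as explained above), are simply sidestepped. You would need to import some version of this scalar-degree bookkeeping at non-multiples of $k_1$ to repair your argument; as written, the inductive step is incorrect.
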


\begin{cor}
Let $$f(x)=\sum_{i=1}^d\varepsilon_ix^{k_i}\quad\quad\quad\quad\text{ and }\quad\quad\quad\quad g(x)=\sum_{i=1}^d\eta_ix^{l_i}$$ be two Brieskorn polynomials. If $f$ and $g$ are blow-Nash equivalent then $\forall i=1,\ldots,d$ we have $k_i=l_i$.
\end{cor}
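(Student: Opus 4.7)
The plan is to exploit the Thom--Sebastiani-type convolution formula. Since $f$ is a Brieskorn polynomial, it decomposes as a direct sum of one-variable monomials, $f = f_{k_1}^{\varepsilon_1} \oplus \cdots \oplus f_{k_d}^{\varepsilon_d}$, and iterating Theorem \ref{thm:TSzeta} yields
\[
\tilde Z_f(T) \;=\; (-1)^{d-1}\, \tilde Z_{f_{k_1}^{\varepsilon_1}}(T) \circledast \cdots \circledast \tilde Z_{f_{k_d}^{\varepsilon_d}}(T).
\]
The corollary following the proof of Theorem \ref{thm:TSzeta} shows that $Z_f$ and $\tilde Z_f$ determine each other, so $(d, Z_f(T))$ gives access to $\tilde Z_f(T)$ together with the above Hadamard-convolutional decomposition.

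By Example \ref{eg:monomial}, the $T^n$-coefficient of $\tilde Z_{f_k^\varepsilon}(T)$ equals $-\mathbb 1$ for $1 \le n < k$ and otherwise involves a non-trivial factor $\mathbb L^{-\lfloor n/k \rfloor}$. Since $\mathbb 1$ is the unit of the convolution $*$, the $T^n$-coefficient of $\tilde Z_f(T)$ is $(-1)^{d-1}(-\mathbb 1)^{*d} = -\mathbb 1$ for every $1 \le n < k_1 := \min_i k_i$. At $n = k_1$, writing $m$ for the multiplicity of $k_1$ in $(k_1,\ldots,k_d)$, this coefficient becomes
\[
(-1)^{m+1}\, \mathbb L^{-m} \bigast_{i=1}^m \bigl([f_{k_1}^{\varepsilon_i}] - \mathbb 1\bigr),
\]
which lies in $\mathcal F^{0}\mathcal M$ by a filtration argument: each factor $[f_{k_1}^{\varepsilon_i}] - \mathbb 1$ belongs to $\mathcal F^{-1}\mathcal M$, the $m$-fold convolution to $\mathcal F^{-m}\mathcal M$, and scalar multiplication by $\mathbb L^{-m}$ shifts back to $\mathcal F^{0}\mathcal M$. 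Whereas $-\mathbb 1 \notin \mathcal F^{0}\mathcal M$ (for instance because $\beta(\overline{-\mathbb 1}) = 1-u$ has positive $u$-degree, while $\beta(\overline{\mathcal F^{0}\mathcal M}) \subset \mathbb Z[u^{-1}]$). Hence this coefficient differs from $-\mathbb 1$ in $\mathcal M$, so $k_1$ is characterised as the smallest $n \ge 1$ for which the $T^n$-coefficient of $\tilde Z_f(T)$ is not $-\mathbb 1$; the multiplicity $m$ is then extracted from the filtration depth of the $T^{k_1}$-coefficient. The remaining exponents $k_{m+1},\ldots,k_d$ are recovered inductively in the same spirit: for $k_1 < n < k_{m+1}$ with $k_1 \nmid n$, Example \ref{eg:monomial} gives that the $T^n$-coefficient of $\tilde Z_f(T)$ equals $-\mathbb L^{-m\lfloor n/k_1\rfloor}$, a value determined entirely by $(k_1, m)$; for $n = k_1 q < k_{m+1}$ it has an analogously explicit form in terms of $\bigast_{i\le m}([f_{k_1}^{\varepsilon_i}] - \mathbb 1)$, a class already encoded in the $T^{k_1}$-coefficient. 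The first $n > k_1$ at which the actual coefficient deviates from this prescribed pattern then yields $k_{m+1}$, and iteration recovers the full multiset $(k_1,\ldots,k_d)$.

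The main technical obstacle is the book-keeping in the inductive step, in particular when earlier exponents divide later ones, so that the ``prescribed pattern'' at multiples of the $k_j$'s becomes intricate. A cleaner alternative is to apply the realisation $\chi_c \circ F^+$ (Lemma \ref{lem:TSchiFplus}), which converts $\circledast$ into the ordinary Hadamard product in $\mathbb Z[[T]]$; the $k_i$'s can then be read off from the pole structure of the resulting rational function (combined, if necessary, with $\chi_c \circ F^-$ to circumvent the accidental cancellations that occur for odd $k_i$'s).
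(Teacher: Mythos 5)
Your strategy is right in spirit: the Corollary reduces to the recovery Proposition (Proposition \ref{prop:recoverweights}), and you correctly set up the iterated convolution
$\tilde Z_f(T)=(-1)^{d-1}\,\tilde Z_{f_{k_1}^{\varepsilon_1}}(T)\circledast\cdots\circledast\tilde Z_{f_{k_d}^{\varepsilon_d}}(T)$
together with the coefficient formulas from Example \ref{eg:monomial}. The identification of $k_1$ as the smallest $n$ with $a_n\neq-\mathbb1$ is also sound. However, there is a genuine gap in the step where you extract the multiplicity $m$ from the $T^{k_1}$-coefficient, and it propagates through the rest of your induction.

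The problem is that for $k$ odd one has $[f_k^\varepsilon]=\mathbb1$ in $K_0$, hence $[f_k^\varepsilon]-\mathbb1=0$. Indeed the map $\psi:\mathbb R^*\rightarrow\mathbb R^*$, $\psi(y)=\varepsilon y^k$, is an $\AS$-bijection, it satisfies $\psi(\lambda y)=\lambda^k\psi(y)$ (so it is equivariant for the action $\lambda\cdot y=\lambda y$ on the source and the $\mathbb1_k$-action $\lambda\cdot_kx=\lambda^kx$ on the target), and $\id\circ\psi$ equals the structure morphism $y\mapsto\varepsilon y^k$; this is exactly an isomorphism in $\Var^k_\mon$, so $[f_k^\varepsilon:\mathbb R^*\rightarrow\mathbb R^*]=\mathbb1_k$. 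Consequently, when $k_1$ is odd the coefficient
$(-1)^{m+1}\mathbb L^{-m}\bigast_{i=1}^m\bigl([f_{k_1}^{\varepsilon_i}]-\mathbb1\bigr)$
is identically zero, independently of $m$: there is no ``filtration depth'' to read off, and the subsequent ``prescribed pattern'' at multiples of $k_1$ becomes $0$ regardless of the multiplicity. Your fallback via $\chi_c\circ F^\pm$ runs into exactly the same wall: for odd $k$, both $\chi_c\bigl(F^+([f_k^\varepsilon]-\mathbb1)\bigr)$ and $\chi_c\bigl(F^-([f_k^\varepsilon]-\mathbb1)\bigr)$ vanish (in each case $f_k^{\varepsilon-1}(\pm1)$ is a single point, as is $\id^{-1}(\pm1)$), so combining $F^+$ with $F^-$ does not circumvent the cancellation. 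There are further unresolved edge cases in your induction even when $k_1$ is even — for instance if $k_{m+1}=k_1+1$, the coefficient $a_{k_1+1}$ already mixes contributions from the $k_{m+1}$-block and no longer equals $-\mathbb L^{-m}$, so you cannot read off $m$ from it without already knowing $k_{m+1}$.

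This is precisely the obstruction that the paper's proof of Proposition \ref{prop:recoverweights} is designed to avoid. Rather than using coefficients at $T^{k_1}$ or at small multiples of the exponents (where classes like $[f_k^\varepsilon]-\mathbb1$ appear and may vanish), the paper works only with indices $n$ that are \emph{not} multiples of any exponent, where the coefficient has the clean form $a_n=-\mathbb L^{-\sum_i\lfloor n/k_i\rfloor}$ and the integer $\sum_i\lfloor n/k_i\rfloor$ is read off as $-\deg\beta(\overline{a_n})+1$. The Chinese remainder theorem is then used to manufacture many such $n$, carefully positioned relative to the candidate exponents, so that the resulting linear system determines all the multiplicities $\mult(q)$; the small primes $2,3,5,7$ and small exponents $2,3,4$ require separate treatment, and that is where $\chi_c\circ F^+$ is deployed — but only on even exponents, where it does not vanish. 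To repair your argument you would need to replace the step recovering $m$ (and the subsequent ``deviation detection'') by something that avoids multiples of the exponents, which essentially brings you back to the combinatorics the paper carries out.
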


Remember from Example \ref{eg:monomial} that
$$\hspace{-1.5cm}\tilde Z_{\varepsilon_ix_i^{k_i}}(T) = -T-\cdots-T^{k_i-1}-\left(\mathbb1-[\varepsilon_ix_i^{k_i}]\right)\mathbb L^{-1}T^{k_i}-\mathbb L^{-1}T^{k_i+1}-\cdots-\mathbb L^{-1}T^{2k_i-1}-\left(\mathbb1-[\varepsilon_ix_i^{k_i}]\right)\mathbb L^{-2}T^{2k_i}-\mathbb L^{-2}T^{2k_i+1}-\cdots$$

Notice that the coefficients of terms whose degrees are not multiples of $k_i$ are of the form $-\mathbb L^{-\alpha}$ where $\alpha$ is the integral part of the degree divided by $k_i$.

Denote by $a_n\in\M$ the coefficients of the modified zeta function of $f$ so that $$\tilde Z_f(T)=\sum_{n\ge1}a_n T^n\in \M[[T]]$$
Using the convolution formula and that $\mathbb L^{-\alpha}*\mathbb L^{-\beta}=\mathbb L^{-(\alpha+\beta)}$ ($\M_{\AS}$-bilinearity of the convolution product) we deduce from the previous remark that if $k_i$ doesn't divide $n$ for all $i$ then $$a_n=-\mathbb L^{-\sum_{i=1}^d\left\lfloor\frac{n}{k_i}\right\rfloor}$$

Moreover, we may recover the degree $\sum_{i=1}^d\left\lfloor\frac{n}{k_i}\right\rfloor$ using the forgetful morphism $\overline{\vphantom{1em}\ \cdot\ }:\M\rightarrow\M_{\AS}$ and the natural extension of the virtual Poincaré polynomial to $\M_{\AS}$, $\beta:\M_{\AS}\rightarrow\mathbb Z[u,u^{-1}]$. Indeed,
$$\beta\left(\overline{a_n}\right)=\beta\left(\overline{-\mathbb L^{-\sum_{i=1}^d\left\lfloor\frac{n}{k_i}\right\rfloor}}\right)=\beta\left(-\mathbb L_{\AS}^{-\sum_{i=1}^d\left\lfloor\frac{n}{k_i}\right\rfloor}(\mathbb L_{\AS}-1)\right)=-u^{-\sum_{i=1}^d\left\lfloor\frac{n}{k_i}\right\rfloor}(u-1)$$
and thus 
$$\sum_{i=1}^d\left\lfloor\frac{n}{k_i}\right\rfloor=-\deg\beta(\overline{a_n})+1$$

The idea of the proof of Proposition \ref{prop:recoverweights} is to use the previous fact to reduce to a combinatorial problem.

\begin{lemma}\label{lem:lemFE}
Fix $m\in\mathbb N_{>0}$. Let $x=\displaystyle\sum_{i=1}^m\frac{1}{l_i}$ with $l_i\in\mathbb N_{>0}$. Then there exists a finite number of $m$-tuples $(l_i')_{i=1,\ldots,m}$ such that $x=\displaystyle\sum_{i=1}^m\frac{1}{l_i'}$.
\end{lemma}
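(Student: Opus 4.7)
The plan is to proceed by induction on $m$, after first reducing to counting tuples with entries in non-decreasing order. Since any permutation of a solution is again a solution and there are only $m!$ permutations of an $m$-tuple, it suffices to show that the number of $m$-tuples $(l_1',\ldots,l_m')\in\mathbb N_{>0}^m$ with $l_1'\le l_2'\le\cdots\le l_m'$ and $\sum_{i=1}^m 1/l_i'=x$ is finite.

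For the base case $m=1$, we have $l_1'=1/x$, which determines $l_1'$ uniquely (and forces $x=1/n$ for some $n\in\mathbb N_{>0}$ for there to be any solution at all). For the inductive step, assume the statement for $m-1$ and suppose $\sum_{i=1}^m 1/l_i'=x$ with $l_1'\le\cdots\le l_m'$. Then $m/l_1'\ge\sum_{i=1}^m 1/l_i'=x$, so $l_1'\le m/x$. Hence $l_1'$ takes only finitely many values in $\{1,2,\ldots,\lfloor m/x\rfloor\}$. For each admissible choice of $l_1'$, the tuple $(l_2',\ldots,l_m')$ is a non-decreasing solution to $\sum_{i=2}^m 1/l_i'=x-1/l_1'$ (with the extra ordering constraint $l_2'\ge l_1'$, which can only reduce the number of solutions). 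If $x-1/l_1'\le 0$ there are no solutions; otherwise, the inductive hypothesis applied to the positive rational $x-1/l_1'$ and the integer $m-1$ yields a finite number of tuples $(l_2',\ldots,l_m')$. Summing over the finitely many choices of $l_1'$ gives finitely many ordered solutions, and multiplying by at most $m!$ gives the bound for arbitrary tuples.

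I do not expect any real obstacle here: this is the standard finiteness statement for Egyptian-fraction representations of fixed length, and the inductive bound $l_1'\le m/x$ is what makes the recursion terminate. The only mild subtlety is to make sure one is counting ordered tuples as stated in the lemma, which is handled cleanly by the reduction to non-decreasing tuples at the outset.
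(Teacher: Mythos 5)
Your proof is correct and follows essentially the same inductive strategy as the paper: reduce to non-decreasing tuples, bound $l_1'\le m/x$ to get finitely many choices for the smallest term, and invoke the inductive hypothesis on $x-1/l_1'$ with $m-1$ terms. The paper is terser (it omits the explicit base case and the explicit remark that one only loses a factor of at most $m!$ when passing to ordered tuples), but the argument is the same.
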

\begin{proof}
Assume that the statement is true for some $m\in\mathbb N_{>0}$ and let $x=\displaystyle\sum_{i=1}^{m+1}\frac{1}{l_i}$. We may assume that $l_1\le\cdots\le l_{m+1}$. Then $l_1\le\frac{m+1}{x}$. Then there is a finite number of choices for $l_1$ and for every choice $x-\frac{1}{l_1}=\displaystyle\sum_{i=2}^{m+1}\frac{1}{l_i}$ admits a finite number of expressions of this form.
\end{proof}

\begin{proof}[Proof of Proposition \ref{prop:recoverweights}]
Let $$\tilde Z_f(T)=\sum_{n\ge1}a_n T^n\in\M[[T]]$$ Denote by $\mathcal P$ the set of primes. For $p\in\mathcal P$ big enough, $p$ is not a multiple of a $k_i$. Thus $$\lim_{p\in\mathcal P}\frac{-\deg\left(\beta(\overline{a_p})\right)+1}{p}=\lim_{p\in\mathcal P}\frac{\sum_{i=1}^d\left\lfloor\frac{p}{k_i}\right\rfloor}{p}=\sum_{i=1}^d\frac{1}{k_i}$$
We deduce from the previous computation and Lemma \ref{lem:lemFE} that we may derive from $\tilde Z_f(T)$ (or equivalently from $Z_f(T)$) an integer $K$ such that $k_1\le\cdots\le k_d\le K$.

Denote by $\mathcal P'$ the set of primes lower or equal to $K$. For $p\in \mathcal P'$, we denote by $\gamma_p$ the greatest exponent such that $p^{\gamma_p}\le K$.

Set $$Q=\left\{\prod_{p\in\mathcal P'}p^{\alpha_p},\,0\le\alpha_p\le\gamma_p\right\}$$ so that $\{k_1,\ldots,k_d\}\subset Q$. Up to adding elements in $Q$, we may assume that $2,3,5,7\in\mathcal P'$ and that $\gamma_2\ge3,\gamma_3\ge2,\gamma_5\ge1,\gamma_7\ge1$.

For $q\in Q$, set $\mult(q)=\#\{k_i,\,k_i=q\}$. Thus our goal is to compute $\mult q,\,q\in Q$, from $Z _f(T)$. \\

The main idea of the proof consists, for a number $q\in Q$, to use the Chinese remainder theorem to find $n$ such that $n-1$ and $n+1$ are not multiple of a term in $Q\setminus\{1\}$ and such that the only factors of $n$ that are in $Q$ are the factors of $q$. The first condition ensures that no exponent divides $n-1$ and $n+1$ so that we can recover the degrees of $a_{n-1}$ and $a_{n+1}$ as previously. The second condition ensures that the difference of these degrees is exactly $$\sum_{k|q}\mult k$$
This allows us to compute $\mult q$ recursively by increasing the number of factors in $q$. This is exactly the first item below.

Unfortunately this method won't work when $2\nmid q$ or $3\nmid q$ since then $3$ or $2$ may divide $n-1$ or $n+1$ (whereas $2$ and $3$ may appear as exponents in the polynomial). Thus we have to manage these cases separately in a similar, but more sophisticated, way. \\

We first notice that $\mult(1)=0$.

\begin{itemize}
\item Equations involving $\mult q$ for $q\in Q$ satisfying $6|q$. \\
Let $\alpha_2,\alpha_3$ be such that $1\le\alpha_2\le\gamma_2$ and $1\le\alpha_3\le\gamma_3$ and for each $p\in\mathcal P'\setminus\{2,3\}$, let $\alpha_p$ be such that $0\le\alpha_p\le\gamma_p$.

The Chinese remainder theorem ensures the existence of $n$ such that $$\left\{\begin{array}{ll}n\equiv p^{\alpha_p}\mod p^{\alpha_p+1} & \text{ if }\alpha_p>0\\n\equiv 2 \mod p&\text{ otherwise}\end{array}\right.$$

Thus, for $p\in \mathcal P'$, $p$ doesn't divide $n-1$ and $n+1$. This ensures that, for $q\in Q$, $n-1$ and $n+1$ are not multiple of $q$ and particularly of an exponent $k_i$. So $a_{n-1}=-\mathbb L^{-\sum_{i=1}^d\left\lfloor\frac{n-1}{k_i}\right\rfloor}$ and $a_{n+1}=-\mathbb L^{-\sum_{i=1}^d\left\lfloor\frac{n+1}{k_i}\right\rfloor}$.

Moreover the elements of $Q$ which divide $n$ are exactly those of the form $\displaystyle\prod_{p\in\mathcal P'}p^{\beta_p}$ with $0\le\beta_p\le\alpha_p$.

Therefore $$-\deg\beta(\overline{a_{n+1}})+\deg\beta(\overline{a_{n-1}})=\sum_{0\le\beta_p\le\alpha_p}\mult\left(\prod_{p\in\mathcal P'}p^{\beta_p}\right)$$

\item Computation of $\mult2$. \\
Assume that $\alpha_2=2$, $\alpha_3=\alpha_5=1$ and $\alpha_p=0$ for $p\in\mathcal P'\setminus\{2,3,5\}$. Let $n$ be such that $$\left\{\begin{array}{ll}n\equiv p^{\alpha_p}\mod p^{\alpha_p+1} & \text{ if }\alpha_p>0\\n\equiv 3 \mod p&\text{ if $\alpha_p=0$}\end{array}\right.$$

Then $n=60n'$ where no term in $\mathcal P'$ divides $n'$. No term in $Q$ divide $n-1$ and $n+1$. And $2$ but no other term in $Q$ divides $n-2$ and $n+2$. Thus
$$a_{n-2}=-\alpha\mathbb L^{-\sum_{i=1}^d\left\lfloor\frac{n-2}{k_i}\right\rfloor}\quad\text{ and }\quad a_{n+2}=-\alpha\mathbb L^{-\sum_{i=1}^d\left\lfloor\frac{n+2}{k_i}\right\rfloor}$$
where $\alpha$ is of the form $\bigast_{i=1}^{\mult2}\left(\mathbb1-[\varepsilon_i x^2:\mathbb R^*\rightarrow\mathbb R^*]\right)$. Thus
$$\beta(F^+(a_{n-2}))=\beta\left(-\mathbb L_{\AS}^{-\sum_{i=1}^d\left\lfloor\frac{n-2}{k_i}\right\rfloor}F^+(\alpha)\right)=-u^{-\sum_{i=1}^d\left\lfloor\frac{n-2}{k_i}\right\rfloor}\beta\left(F^+(\alpha)\right)$$
and similarly
$$\beta(F^+(a_{n+2}))=\beta\left(-\mathbb L_{\AS}^{-\sum_{i=1}^d\left\lfloor\frac{n+2}{k_i}\right\rfloor}F^+(\alpha)\right)=-u^{-\sum_{i=1}^d\left\lfloor\frac{n+2}{k_i}\right\rfloor}\beta\left(F^+(\alpha)\right)$$
We deduce from Lemma \ref{lem:TSchiFplus} and $\chi_c\left(F^+\left(\mathbb 1-[\pm x^2]\right)\right)=\mp 1$ that $$\beta\left(F^+(\alpha)\right)(u=-1)=\chi_c(F^+(\alpha))=\prod_{i=1}^{\mult2}\chi_c(F^+(\mathbb 1-[\varepsilon_ix^2]))=\pm1$$
Thus $\beta\left(F^+(\alpha)\right)\neq0$ and 
$$-\deg\beta(F^+(a_{n+2}))+\deg\beta(F^+(a_{n-2}))=\mult2+\sum_{q\in Q,q|60}\mult(q)$$
Notice that in the first case we got an equation of the form $$cst=\sum_{q\in Q,q|60}\mult(q)$$
Therefore $\mult2$ may be derived from $Z_f(T)$.

\item Computation of $\mult q$ with $2\nmid q$ and $3\nmid q$. \\
Assume that $\alpha_2=\alpha_3=0$ and that $0\le\alpha_p\le\gamma_p$ for each $p\in\mathcal P'\setminus\{2,3\}$. Let $n$ be such that $$\left\{\begin{array}{ll}n\equiv p^{\alpha_p}\mod p^{\alpha_p+1} & \text{ if }\alpha_p>0\\n\equiv 1 \mod 8& \\ n\equiv 1\mod p&\text{ if }p\neq2,\,\alpha_p=0\end{array}\right.$$
Then the only elements of $Q$ which divide $n$ are those of the form $\displaystyle\prod_{p\in\mathcal P'}p^{\beta_p}$ with $0\le\beta_p\le\alpha_p$, the only element in $Q$ which divides $n+1$ and $n-3$ is $2$, no element in $Q$ divides $n-2$ and $6$ divides $n-1$.
Thus $$-\deg\beta(\overline{a_{n+1}})+\deg\beta(\overline{a_{n-3}})=\sum_{q\in Q,q|n-1}\mult(q)+\sum_{0\le\beta_p\le\alpha_p}\mult\left(\prod_{p\in\mathcal P'}p^{\beta_p}\right)+\mult(2)$$
Notice that since in the first case we got an equation of the form $$cst=\sum_{q\in Q,q|n-1}\mult(q)$$
and that we already know $\mult2$, thus we get an equation of the form
$$cst=\sum_{0\le\beta_p\le\alpha_p}\mult\left(\prod_{p\in\mathcal P'}p^{\beta_p}\right)$$
Remark: either $5|n$ (if $5|q$) or $5|n-1$.

Therefore we may recursively compute $\mult q$ for each $q\in Q$ such that $2\nmid q$ and $3\nmid q$ by varying $\alpha_p$ for each $p\in\mathcal P'\setminus\{2,3\}$.

\item Computation of $\mult3$ and $\mult4$. Let $n$ be such that $$\left\{\begin{array}{ll}n\equiv4\mod8&\\n\equiv4\mod9&\\n\equiv4\mod25&\\n\equiv5\mod7&\\n\equiv4\mod p&\text{ if $p\in\mathcal P'\setminus\{2,3,5,7\}$}\end{array}\right.$$ Then $2,4$ are the only element of $Q$ dividing $n$, $3$ is the only element of $Q$ dividing $n-1$, $5$ is the only element of $Q$ dividing $n+1$, $2$ is the only element of $Q$ dividing $n-2$, $6$ divides $n+2$ and no element of $Q$ divides $n-3,n+3$. Thus $$-\deg\beta(\overline{a_{n+3}})+\deg\beta(\overline{a_{n-3}})=\sum_{q\in Q,q|n+2}\mult(q)+2\mult2+\mult3+\mult4+\mult5$$
Notice that we already know $\mult2$, $\mult5$ and that in the previous case we got an equation of the form $$c=\sum_{q\in Q,q|n+2}\mult(q)$$
So we have an equation of the form 
\begin{equation}\label{eqn:34-1}
\mult3+\mult4=cst
\end{equation}

Now let $\alpha_2=3,\alpha_3=2,\alpha_5=1,\alpha_7=1$. Let $n$ be such that $$\left\{\begin{array}{ll}n\equiv p^{\alpha_p}\mod p^{\alpha_p+1} & \text{ if }\alpha_p>0\\n\equiv5\mod p&\text{ if $\alpha_p=0$}\end{array}\right.$$
Then $2^3\cdot3^2\cdot5\cdot7$ divides $n$, no term of $Q$ divides $n-1,n+1$, only $2$ divides $n-2,n+2$, only $3$ divides $n-3,n+3$ and only $2,4$ divide $n-4,n+4$.

Since $\chi_c\left(F^+\left(\mathbb 1-[\pm x^2]\right)\right)=\pm 1$ and $\chi_c\left(F^+\left(\mathbb 1-[\pm x^4]\right)\right)=\pm 1$, we have $$-\deg\beta(F^+(a_{n+4}))+\deg\beta(F^+(a_{n-4}))=3\mult2+2\mult3+\mult4+\sum_{q\in Q,q|n}\mult(q)$$ but in the first case we got an equation of the form $\sum_{q\in Q,q|n}\mult(q)=cst$ and we already know $\mult2$, thus we get an equation of the form 
\begin{equation}\label{eqn:34-2}
2\mult3+\mult4=cst
\end{equation}

We compute $\mult3$ and $\mult4$ from the system given by (\ref{eqn:34-1}) and (\ref{eqn:34-2}).

\item Computation of $\mult q$ with $2|q$, $4\nmid q$, $3\nmid q$ and $5\nmid q$. \\
Let $\alpha_2=1$, $\alpha_3=\alpha_5=0$ and $0\le\alpha_p\le\gamma_p$ otherwise. Let $n$ be such that $$\left\{\begin{array}{ll}n\equiv p^{\alpha_p}\mod p^{\alpha_p+1} & \text{ if $\alpha_p>0,p\neq2$}\\n\equiv2\mod8\\n\equiv2\mod9&\\n\equiv6\mod25&\\n\equiv2\mod7&\text{ if $\alpha_7=0$}\\n\equiv 4\mod p&\text{ if $\alpha_p=0$ and $p\neq3,5,7$}\end{array}\right.$$ Then the only terms of $Q$ dividing $n$ are the divisors of $\prod p^{\alpha_p}$, the only term of $Q$ dividing $n-1$ is $5$, the only term of $Q$ dividing $n+1$ is $3$, the only terms of $Q$ dividing $n+2$ are $2,4$, no term of $Q$ divides $n-3,n+3$ and $6$ divides $n-2$. Thus
$$-\deg\beta(\overline{a_{n+3}})+\deg\beta(\overline{a_{n-3}})=\mult2+\mult3+\mult4+\mult5+\sum_{q\in Q,q|n-2}\mult(q)+\sum_{0\le\beta_p\le\alpha_p}\mult\left(\prod_{p\in\mathcal P'}p^{\beta_p}\right)$$
from which we derive an equation of the form 
$$\sum_{\substack{\beta_2=1\\0\le\beta_p\le\alpha_p}}\mult\left(\prod_{p\in\mathcal P'}p^{\beta_p}\right)=cst$$
since we already know $\mult q$ for $q\in Q$ with $2\nmid q$ and $3\nmid q$ and since we got in the first case an equation of the form $$\sum_{q\in Q,q|n-2}\mult(q)=cst$$

Therefore we may recursively compute $\mult q$ for each $q\in Q$ such that $2|q$, $4\nmid q$, $3\nmid q$ and $5\nmid q$ by varying the $\alpha_p$.

\item Computation of $\mult q$ with $4|q$, $3\nmid q$ and $5\nmid q$. \\
Let $2\le\alpha_2\le\gamma_2$, $\alpha_3=\alpha_5=0$ and $0\le\alpha_p\le\gamma_p$ otherwise. Let $n$ be such that $$\left\{\begin{array}{ll}n\equiv p^{\alpha_p}\mod p^{\alpha_p+1} & \text{ if $\alpha_p>0$}\\n\equiv2\mod9\\n\equiv6\mod25&\\n\equiv2\mod7&\text{if $\alpha_7=0$}\\n\equiv 4 \mod p&\text{if $\alpha_p=0,p\neq3,5,7$}\end{array}\right.$$ Then the only terms of $Q$ dividing $n$ are the divisors of $\prod p^{\alpha_p}$, the only term of $Q$ dividing $n-1$ is $5$, the only term of $Q$ dividing $n+1$ is $3$, the only term of $Q$ dividing $n+2$ is $2$, $6$ divides $n-2$ and no term of $Q$ divides $n-3,n+3$. Thus
$$-\deg\beta(\overline{a_{n+3}})+\deg\beta(\overline{a_{n-3}})=\mult2+\mult3+\mult5+\sum_{q\in Q,q|n-2}\mult(q)+\sum_{0\le\beta_p\le\alpha_p}\mult\left(\prod_{p\in\mathcal P'}p^{\beta_p}\right)$$
from which we derive an equation of the form
$$\sum_{\substack{2\le\beta_2\le\alpha_2\\0\le\beta_p\le\alpha_p}}\mult\left(\prod_{p\in\mathcal P'}p^{\beta_p}\right)=cst$$
Therefore we may recursively compute $\mult q$ for each $q\in Q$ such that $4|q$, $3\nmid q$ and $5\nmid q$ by varying the $\alpha_p$.

\item Computation of $\mult q$ with $2|q$, $4\nmid q$, $3\nmid q$, $5|q$. Let $\alpha_2=1$, $\alpha_3=0$, $1\le\alpha_5\le\gamma_5$ and $0\le\alpha_p\le\gamma_p$ otherwise. Let $n$ be such that $$\left\{\begin{array}{ll}n\equiv p^{\alpha_p}\mod p^{\alpha_p+1} & \text{ if }\alpha_p>0\text{ and }p\neq2\\ n\equiv 2\mod8&\\n\equiv2\mod9\\n\equiv 3\mod p&\text{ if }\alpha_p=0\text{ and }p\neq3\end{array}\right.$$ Then the only terms of $Q$ dividing $n$ are the divisors of $\prod p^{\alpha_p}$, the only term in $Q$ dividing $n+1$ is $3$ and the only terms in $Q$ dividing $n+2$ are $2,4$. No term of $Q$ divides $n-1,n+3$. Thus $$-\deg\beta(\overline{a_{n+3}})+\deg\beta(\overline{a_{n-1}})=\mult3+\mult2+\mult4+\sum_{0\le\beta_p\le\alpha_p}\mult\left(\prod_{p\in\mathcal P'}p^{\beta_p}\right)$$

We conclude as in the previous cases.

\item Computation of $\mult q$ with $4|q$, $3\nmid q$, $5|q$. Let $2\le\alpha_2\le\gamma_2$, $\alpha_3=0$, $1\le\alpha_5\le\gamma_5$ and $0\le\alpha_p\le\gamma_p$ otherwise. Let $n$ be such that $$\left\{\begin{array}{ll}n\equiv p^{\alpha_p}\mod p^{\alpha_p+1} & \text{ if }\alpha_p>0\\n\equiv2\mod9&\\n\equiv 3\mod p&\text{ if }\alpha_p=0\text{ and }p\neq3\end{array}\right.$$ Then the only terms of $Q$ dividing $n$ are the divisors of $\prod p^{\alpha_p}$, the only term in $Q$ dividing $n+1$ is $3$ and the only terms in $Q$ dividing $n+2$ are $2$. No term of $Q$ divides $n-1,n+3$. Thus $$-\deg\beta(\overline{a_{n+3}})+\deg\beta(\overline{a_{n-1}})=\mult3+\mult2+\sum_{0\le\beta_p\le\alpha_p}\mult\left(\prod_{p\in\mathcal P'}p^{\beta_p}\right)$$

We conclude as in the previous cases.

\item Computation of $\mult q$ with $3|q$, $2\nmid q$ and $5\nmid q$. Let $1\le\alpha_3\le\gamma_3$, $\alpha_2,\alpha_5=0$ and $0\le\alpha_p\le\gamma_p$ otherwise. Let $n$ be such that $$\left\{\begin{array}{ll}n\equiv p^{\alpha_p}\mod p^{\alpha_p+1} & \text{ if }\alpha_p>0\\ n\equiv 3\mod8&\\n\equiv4\mod25\\ n\equiv 3\mod p&\text{ is $\alpha_p=0$, $p\neq2$, $p\neq5$}\end{array}\right.$$ Then the only terms of $Q$ dividing $n$ are the divisors of $\prod p^{\alpha_p}$, 2 is the only term of $Q$ dividing $n-1$, $2,4,5,10,20$ are the only terms of $Q$ dividing $n+1$ and no term in $Q$ divides $n-2,n+2$. Thus $$-\deg\beta(\overline{a_{n+2}})+\deg\beta(\overline{a_{n-2}})=\mult2+\sum_{q\in Q,q|20}\mult(q)+\sum_{0\le\beta_p\le\alpha_p}\mult\left(\prod_{p\in\mathcal P'}p^{\beta_p}\right)$$
Notice that in the previous case we got an equation of the form $$c=\sum_{q\in Q,q|20}\mult(q)$$

We conclude as in the previous cases.

\item Computation of $\mult q$ with $3|q$, $2\nmid q$ and $5|q$. Let $1\le\alpha_3\le\gamma_3$, $\alpha_2=0$, $1\le\alpha_5\le\gamma_5$ and $0\le\alpha_p\le\gamma_p$ otherwise. Let $n$ be such that $$\left\{\begin{array}{ll}n\equiv p^{\alpha_p}\mod p^{\alpha_p+1} & \text{ if }\alpha_p>0\\ n\equiv 3\mod8 \\ n\equiv 3\mod p&\text{ if $\alpha_p=0$, $p\neq2$}\end{array}\right.$$ Then the only terms of $Q$ dividing $n$ are the divisors of $\prod p^{\alpha_p}$, 2 is the only term of $Q$ dividing $n-1$, $2,4$ are the only terms of $Q$ dividing $n+1$ and no term in $Q$ divides $n-2,n+2$. Thus $$-\deg\beta(\overline{a_{n+2}})+\deg\beta(\overline{a_{n-2}})=2\mult2+\mult4+\sum_{0\le\beta_p\le\alpha_p}\mult\left(\prod_{p\in\mathcal P'}p^{\beta_p}\right)$$

We conclude as in the previous cases.

\item Computation of $\mult q$ for $q\in Q$ satisfying $6|q$. \\
Let $\alpha_2,\alpha_3$ be such that $1\le\alpha_2\le\gamma_2$ and $1\le\alpha_3\le\gamma_3$ and for each $p\in\mathcal P'\setminus\{2,3\}$, let $\alpha_p$ be such that $0\le\alpha_p\le\gamma_p$. \\
Now that we know $\mult q$ for $q\in Q$ satisfying $2\nmid q$ or $3\nmid q$ we may deduce from the equation of the first case an equation of the form
$$\sum_{\substack{1\le\beta_2\le\alpha_2\\1\le\beta_3\le\alpha_3\\0\le\beta_p\le\alpha_p}}\mult\left(\prod_{p\in\mathcal P'}p^{\beta_p}\right)=cst$$
And we conclude as in the previous cases by varying the $\alpha_p$.
\end{itemize}
\end{proof}

\footnotesize


\begin{thebibliography}{10}

\bibitem{AKMW}
{\sc D.~Abramovich, K.~Karu, K.~Matsuki, and J.~W{\l}odarczyk},
  \href{http://dx.doi.org/10.1090/S0894-0347-02-00396-X}{{\em Torification and
  factorization of birational maps}}, J. Amer. Math. Soc., 15 (2002),
  pp.~531--572 (electronic).

\bibitem{AGV-T2}
{\sc V.~I. Arnold, S.~M. Gusein-Zade, and A.~N. Varchenko}, {\em Singularities
  of differentiable maps. {V}olume 2}, Modern Birkh\"auser Classics,
  Birkh\"auser/Springer, New York, 2012.
\newblock Monodromy and asymptotics of integrals, Translated from the Russian
  by Hugh Porteous and revised by the authors and James Montaldi, Reprint of
  the 1988 translation.

\bibitem{AM65}
{\sc M.~Artin and B.~Mazur}, {\em On periodic points}, Ann. of Math. (2), 81
  (1965), pp.~82--99.

\bibitem{BM90}
{\sc E.~Bierstone and P.~D. Milman},
  \href{http://dx.doi.org/10.1007/BF01231509}{{\em Arc-analytic functions}},
  Invent. Math., 101 (1990), pp.~411--424.

\bibitem{BM97}
\leavevmode\vrule height 2pt depth -1.6pt width 23pt,
  \href{http://dx.doi.org/10.1007/s002220050141}{{\em Canonical
  desingularization in characteristic zero by blowing up the maximum strata of
  a local invariant}}, Invent. Math., 128 (1997), pp.~207--302.

\bibitem{BCR}
{\sc J.~Bochnak, M.~Coste, and M.-F. Roy},
  \href{http://dx.doi.org/10.1007/978-3-662-03718-8}{{\em Real algebraic
  geometry}}, vol.~36 of Ergebnisse der Mathematik und ihrer Grenzgebiete (3)
  [Results in Mathematics and Related Areas (3)], Springer-Verlag, Berlin,
  1998.
\newblock Translated from the 1987 French original, Revised by the authors.

\bibitem{jbc1}
{\sc J.-B. Campesato}, \href{http://arxiv.org/abs/arXiv:1406.6637}{{\em An
  inverse mapping theorem for blow-{N}ash maps on singular spaces}}, 2014,
  arXiv:1406.6637.
\newblock To appear in Nagoya Math. J.

\bibitem{CF14}
{\sc G.~Comte and G.~Fichou},
  \href{http://dx.doi.org/10.2140/gt.2014.18.963}{{\em Grothendieck ring of
  semialgebraic formulas and motivic real {M}ilnor fibers}}, Geom. Topol., 18
  (2014), pp.~963--996.

\bibitem{Den87}
{\sc J.~Denef}, \href{http://dx.doi.org/10.2307/2374583}{{\em On the degree of
  {I}gusa's local zeta function}}, Amer. J. Math., 109 (1987), pp.~991--1008.

\bibitem{DH01}
{\sc J.~Denef and K.~Hoornaert},
  \href{http://dx.doi.org/10.1006/jnth.2000.2606}{{\em Newton polyhedra and
  {I}gusa's local zeta function}}, J. Number Theory, 89 (2001), pp.~31--64.

\bibitem{DL92}
{\sc J.~Denef and F.~Loeser}, \href{http://dx.doi.org/10.2307/2152708}{{\em
  Caract\'eristiques d'{E}uler-{P}oincar\'e, fonctions z\^eta locales et
  modifications analytiques}}, J. Amer. Math. Soc., 5 (1992), pp.~705--720.

\bibitem{DL98}
\leavevmode\vrule height 2pt depth -1.6pt width 23pt, {\em Motivic {I}gusa zeta
  functions}, J. Algebraic Geom., 7 (1998), pp.~505--537.

\bibitem{DL99-Germs}
\leavevmode\vrule height 2pt depth -1.6pt width 23pt,
  \href{http://dx.doi.org/10.1007/s002220050284}{{\em Germs of arcs on singular
  algebraic varieties and motivic integration}}, Invent. Math., 135 (1999),
  pp.~201--232.

\bibitem{DL99-TS}
\leavevmode\vrule height 2pt depth -1.6pt width 23pt,
  \href{http://dx.doi.org/10.1215/S0012-7094-99-09910-6}{{\em Motivic
  exponential integrals and a motivic {T}hom-{S}ebastiani theorem}}, Duke Math.
  J., 99 (1999), pp.~285--309.

\bibitem{DL01}
\leavevmode\vrule height 2pt depth -1.6pt width 23pt, {\em Geometry on arc
  spaces of algebraic varieties}, in European {C}ongress of {M}athematics,
  {V}ol. {I} ({B}arcelona, 2000), vol.~201 of Progr. Math., Birkh\"auser,
  Basel, 2001, pp.~327--348.

\bibitem{DL02}
\leavevmode\vrule height 2pt depth -1.6pt width 23pt,
  \href{http://dx.doi.org/10.1016/S0040-9383(01)00016-7}{{\em Lefschetz numbers
  of iterates of the monodromy and truncated arcs}}, Topology, 41 (2002),
  pp.~1031--1040.

\bibitem{Fic05}
{\sc G.~Fichou}, \href{http://dx.doi.org/10.1112/S0010437X05001168}{{\em
  Motivic invariants of arc-symmetric sets and blow-{N}ash equivalence}},
  Compos. Math., 141 (2005), pp.~655--688.

\bibitem{Fic05-bis}
\leavevmode\vrule height 2pt depth -1.6pt width 23pt,
  \href{http://dx.doi.org/10.4064/ap87-0-10}{{\em Zeta functions and
  blow-{N}ash equivalence}}, Ann. Polon. Math., 87 (2005), pp.~111--126.

\bibitem{Fic06}
\leavevmode\vrule height 2pt depth -1.6pt width 23pt,
  \href{http://dx.doi.org/10.2996/kmj/1143122384}{{\em The corank and the index
  are blow-{N}ash invariants}}, Kodai Math. J., 29 (2006), pp.~31--40.

\bibitem{Fic08}
\leavevmode\vrule height 2pt depth -1.6pt width 23pt,
  \href{http://projecteuclid.org/euclid.jmsj/1212156658}{{\em Blow-{N}ash types
  of simple singularities}}, J. Math. Soc. Japan, 60 (2008), pp.~445--470.

\bibitem{FF}
{\sc G.~Fichou and T.~Fukui}, {\em Motivic invariants of real polynomial
  functions and their {N}ewton polyhedrons}, Math. Proc. Cambridge Philos.
  Soc., 160 (2016), pp.~141--166.

\bibitem{Gui02}
{\sc G.~Guibert}, \href{http://dx.doi.org/10.1007/PL00012442}{{\em Espaces
  d'arcs et invariants d'{A}lexander}}, Comment. Math. Helv., 77 (2002),
  pp.~783--820.

\bibitem{GLM}
{\sc G.~Guibert, F.~Loeser, and M.~Merle},
  \href{http://dx.doi.org/10.1215/S0012-7094-06-13232-5}{{\em Iterated
  vanishing cycles, convolution, and a motivic analogue of a conjecture of
  {S}teenbrink}}, Duke Math. J., 132 (2006), pp.~409--457.

\bibitem{GNA}
{\sc F.~Guill{\'e}n and V.~Navarro~Aznar},
  \href{http://dx.doi.org/10.1007/s102400200003}{{\em Un crit\`ere d'extension
  des foncteurs d\'efinis sur les sch\'emas lisses}}, Publ. Math. Inst. Hautes
  \'Etudes Sci.,  (2002), pp.~1--91.

\bibitem{KP03}
{\sc S.~Koike and A.~Parusi{\'n}ski},
  \href{http://aif.cedram.org/item?id=AIF_2003__53_7_2061_0}{{\em Motivic-type
  invariants of blow-analytic equivalence}}, Ann. Inst. Fourier (Grenoble), 53
  (2003), pp.~2061--2104.

\bibitem{Kou76}
{\sc A.~G. Kouchnirenko}, {\em Poly\`edres de {N}ewton et nombres de {M}ilnor},
  Invent. Math., 32 (1976), pp.~1--31.

\bibitem{Kuo79}
{\sc T.~C. Kuo}, {\em Une classification des singularit\'es r\'eelles}, C. R.
  Acad. Sci. Paris S\'er. A-B, 288 (1979), pp.~A809--A812.

\bibitem{Kuo85}
\leavevmode\vrule height 2pt depth -1.6pt width 23pt,
  \href{http://dx.doi.org/10.1007/BF01388802}{{\em On classification of real
  singularities}}, Invent. Math., 82 (1985), pp.~257--262.

\bibitem{Kur88}
{\sc K.~Kurdyka}, \href{http://dx.doi.org/10.1007/BF01460044}{{\em Ensembles
  semi-alg\'ebriques sym\'etriques par arcs}}, Math. Ann., 282 (1988),
  pp.~445--462.

\bibitem{Loo02}
{\sc E.~Looijenga}, {\em Motivic measures}, Ast\'erisque,  (2002),
  pp.~267--297.
\newblock S{\'e}minaire Bourbaki, Vol. 1999/2000.

\bibitem{MP03}
{\sc C.~McCrory and A.~Parusi{\'n}ski},
  \href{http://dx.doi.org/10.1016/S1631-073X(03)00168-7}{{\em Virtual {B}etti
  numbers of real algebraic varieties}}, C. R. Math. Acad. Sci. Paris, 336
  (2003), pp.~763--768.

\bibitem{MP11}
\leavevmode\vrule height 2pt depth -1.6pt width 23pt, {\em The weight
  filtration for real algebraic varieties}, in Topology of stratified spaces,
  vol.~58 of Math. Sci. Res. Inst. Publ., Cambridge Univ. Press, Cambridge,
  2011, pp.~121--160.

\bibitem{Nas52}
{\sc J.~F. Nash, Jr.}, {\em Real algebraic manifolds}, Ann. of Math. (2), 56
  (1952), pp.~405--421.

\bibitem{Nas95}
\leavevmode\vrule height 2pt depth -1.6pt width 23pt,
  \href{http://dx.doi.org/10.1215/S0012-7094-95-08103-4}{{\em Arc structure of
  singularities}}, Duke Math. J., 81 (1995), pp.~31--38 (1996).
\newblock A celebration of John F. Nash, Jr.

\bibitem{Par04}
{\sc A.~Parusi{\'n}ski},
  \href{http://dx.doi.org/10.1007/s00208-003-0486-x}{{\em Topology of injective
  endomorphisms of real algebraic sets}}, Math. Ann., 328 (2004), pp.~353--372.

\bibitem{Qua01}
{\sc R.~Quarez}, \href{http://aif.cedram.org/item?id=AIF_2001__51_1_43_0}{{\em
  Espace des germes d'arcs r\'eels et s\'erie de {P}oincar\'e d'un ensemble
  semi-alg\'ebrique}}, Ann. Inst. Fourier (Grenoble), 51 (2001), pp.~43--68.

\bibitem{Rai12}
{\sc M.~Raibaut}, {\em Singularit\'es \`a l'infini et int\'egration motivique},
  Bull. Soc. Math. France, 140 (2012), pp.~51--100.

\bibitem{Shi87}
{\sc M.~Shiota}, {\em Nash manifolds}, vol.~1269 of Lecture Notes in
  Mathematics, Springer-Verlag, Berlin, 1987.

\bibitem{Ste99}
{\sc N.~Steenrod}, {\em The topology of fibre bundles}, Princeton Landmarks in
  Mathematics, Princeton University Press, Princeton, NJ, 1999.
\newblock Reprint of the 1957 edition, Princeton Paperbacks.

\bibitem{Var76}
{\sc A.~N. Varchenko}, {\em Zeta-function of monodromy and {N}ewton's diagram},
  Invent. Math., 37 (1976), pp.~253--262.

\bibitem{Whi65}
{\sc H.~Whitney}, {\em Local properties of analytic varieties}, in Differential
  and {C}ombinatorial {T}opology ({A} {S}ymposium in {H}onor of {M}arston
  {M}orse), Princeton Univ. Press, Princeton, N. J., 1965, pp.~205--244.

\end{thebibliography}
\end{document}